\def\BibTeX{{\rm B\kern-.05em{\sc i\kern-.025em b}\kern-.08em
    T\kern-.1667em\lower.7ex\hbox{E}\kern-.125emX}}
\DeclareMathOperator*{\argmin}{arg\,min}
\newtheorem{theorem}{Theorem}[section] 
\newtheorem{lemma}[theorem]{Lemma} 
\newtheorem{assumption}[theorem]{Assumption}
\newtheorem{definition}[theorem]{Definition}
\newtheorem{proposition}[theorem]{Proposition}
\newtheorem{corollary}[theorem]{Corollary}
\newtheorem{aproposition}{Proposition}[section]
\newcommand{\inX}[2]{\in\mathbb{#1}^{#2}}                               
\newcommand{\IqZ}{\begin{bmatrix}I_q\\Z\end{bmatrix}}                   
\newcommand{\sqmat}[4]{\begin{bmatrix}#1&#2\\#3&#4\end{bmatrix}}        
\newcommand{\matvec}[2]{\begin{bmatrix}#1\\#2\end{bmatrix}}             
\newcommand{\matvecvec}[3]{\begin{bmatrix}#1\\#2\\#3\end{bmatrix}}      
\newcommand{\Zqr}{\mathcal{Z}_{q,r}}                                      
\newcommand{\im}{\mathrm{im}\,}                                           
\newcommand{\Zab}[2]{\mathcal{Z}_{#1,#2}}                             
\newcommand{\IAB}{\matvecvec{I_n}{A^\top }{B^\top }}                    
\newcommand{\hDel}{\hat{\Delta}}                                        
\newcommand{\hPhi}{\hat{\Phi}}                                          %
\newcommand{\st}{\ \mathrm{s.t.}\ }                                     %
\newcommand{\Hc}[1]{{\mathcal{H}_#1}}
\newcommand{\tr}{\mathrm{trace}\,}
\newcommand{\Ayl}{A_{Y,L}}
\newcommand{\Cyl}{C_{Y,L}}
\newcommand{\univec}[2]{{e_{#1,#2}}}
\newcommand{\SigmaR}{\Sigma_{\mathcal{R}}}
\newcommand{\SigmaN}{\Sigma_{\mathcal{N}}}
\newcommand{\tSigmaR}{\tilde{\Sigma}_{\mathcal{R}}}
\newcommand{\tk}[1]{\textcolor{red}{\sf ({\bf TK:}  #1)}}
\begin{document}
\title{Data Informativity under Data Perturbation}
\author{Taira Kaminaga, and Hampei Sasahara, \IEEEmembership{Member, IEEE}
\thanks{This work was supported by JSPS KAKENHI Grant Number 24K17296.}
\thanks{T. Kaminaga and H. Sasahara are with Department of Systems and
Control Engineering, Graduate School of Engineering, Institute of Science
Tokyo, Tokyo, Japan \tt{kaminaga.t.3734@m.isct.ac.jp, sasahara@sc.eng.isct.ac.jp}.}
}

\maketitle

\begin{abstract}
Data informativity provides a theoretical foundation for determining whether collected data are sufficiently informative to achieve specific control objectives in data-driven control frameworks.
In this study, we investigate the data informativity subject to noise characterized by quadratic matrix inequalities (QMIs), which describe constraints through matrix-valued quadratic functions.
We introduce a generalized noise model, referred to as data perturbation, under which we derive necessary and sufficient conditions formulated as tractable linear matrix inequalities for data informativity with respect to stabilization and performance guarantees via state feedback, as well as stabilization via output feedback.
Our proposed framework encompasses and extends existing analyses that consider exogenous disturbances and measurement noise, while also relaxing several restrictive assumptions commonly made in prior work.
A central challenge in the data perturbation setting arises from the non-convexity of the set of systems consistent with the data, which renders standard matrix S-procedure techniques inapplicable.
To resolve this issue, we develop a novel matrix S-procedure that does not rely on convexity of the system set by exploiting geometric properties of QMI solution sets.
Furthermore, we derive sufficient conditions for data informativity in the presence of multiple noise sources by approximating the combined noise effect through the QMI framework.
The proposed results are broadly applicable to a wide class of noise models and subsume several existing methodologies as special cases.
\if0
\tk{The abstract must be between 150--250 words. Now: 206 words} 
Data informativity is a theoretical framework for analyzing whether data are informative enough to achieve control objectives in data-driven control.
In this study, we analyze data informativity of linear systems with noise constrained by quadratic matrix inequalities (QMIs), which are inequalities involving matrix-valued quadratic functions.
First, we propose a more general noise framework called data perturbation, under which we derive necessary and sufficient conditions for data informativity for stabilization and performance guarantees by state feedback and stabilization by output feedback in the form of LMIs.
Our results include existing analyses under exogenous disturbances and measurement noise, and remove some restrictive assumptions.
In the analysis under data perturbations, the set of systems consistent with data is not necessarily convex, and the existing matrix S-procedure used in previous studies cannot be applied.
We solve this problem by utilizing the geometric properties of the QMI solution set and derive a new matrix S-procedure that does not require any geometric condition on the set of systems consistent with data.
Furthermore, we derive sufficient conditions for data informativity when the data is corrupted by multiple noises by approximating the entire noise with the QMI solution set.
Our results are applicable to a wide range of noise and include many existing studies.
\fi
\end{abstract}

\begin{IEEEkeywords}
Data-driven control, linear matrix inequality, robust control.
\end{IEEEkeywords}

\section{Introduction}
\label{sec:introduction}

There are two principal approaches to designing controllers using data collected from an unknown dynamical system. The first involves identifying a mathematical model from the data, followed by controller design based on the identified model.
The second, known as \emph{data-driven control,} bypasses explicit model identification and directly synthesizes controllers from data.
The latter approach has garnered significant interest in recent years due to its potential for simpler and more streamlined formulations~\cite{DDC:Hou2013}.
In the data-driven control paradigm, it is essential that the available data contain sufficient information about the control objective of interest.
In this context, the concept of \emph{data informativity} has been introduced as a theoretical framework for characterizing the informational adequacy of data~\cite{DDC:Waarde2020_TAC_Dinfo,DDC:Waarde2023_Cont_sys_mag_informativity}.
Various necessary and sufficient conditions have been developed to assess data informativity across different classes of systems and control objectives.

Since the true system dynamics is unknown, the study of data informativity focuses on determining whether all systems consistent with the observed data can be controlled to meet a specified objective.
This is formally expressed through an inclusion relationship between two sets: the set of systems consistent with the data must be contained within the set of systems for which a particular controller achieves the desired control objective.
In this context, a common assumption is that the noise sequence satisfies a quadratic matrix inequality (QMI), which constrains the noise via a matrix-valued quadratic function~\cite{DDCQMI:Waarde2022_TAC_origin}.
The inclusion condition between the system sets can then be reformulated as a linear matrix inequality (LMI) through the application of the matrix S-procedure~\cite{DDCQMI:Waarde2022_TAC_origin,DDCQMI:Waarde2023_siam_qmi}.
This resulting LMI not only serves as a criterion for assessing data informativity but also plays a constructive role in controller synthesis.

The foundational framework described above has been introduced in~\cite{DDCQMI:Waarde2022_TAC_origin}, where the authors examined data informativity for stabilization and for achieving $\mathcal{H}_2$ and $\mathcal{H}_\infty$ performance guarantees using state feedback.
Their analysis has considered systems subject to exogenous disturbances constrained by a QMI.
In~\cite{DDCQMI:BISOFFI2022_Petersen}, stabilization via state feedback is addressed using Petersen’s lemma~\cite{Con:PETERSEN1987}, which serves a similar function to the matrix S-procedure.
A more in-depth analysis of the matrix S-procedure is provided in~\cite{DDCQMI:Waarde2023_siam_qmi}.
The framework has been extended to a broader range of control objectives, including stabilization via output feedback~\cite{DDCQMI:Steentjes2022_Cont_Sys_Let_Covariance,DDCQMI:Waarde2024_TAC_AR}, dissipativity analysis~\cite{DDCQMI:Waarde2024ECC_disp}, model reduction~\cite{DDCQMI:Burohman2023_TAC_reducedorder}, and data-driven predictive control~\cite{DDCQMI:Hu2025_RDPC}.
In parallel, several studies have considered the impact of measurement noise, as opposed to exogenous disturbances.
The studies~\cite{DDCQMI:BISOFFI2024_CSL,DDCQMI:Lidong2024} have addressed stabilization under measurement noise constrained by an energy-bound type QMI, and the study~\cite{DDCQMI:Hu2025_RDPC} have addressed predictive control under both exogenous disturbances and measurement noise.
While the majority of the aforementioned works assume a single QMI constraint imposed on the entire noise sequence, the studies~\cite{DDCQMI:BISOFFI2024_CSL,DDCQMI:BISOFFI2021,DDCQMI:Hu2022_CDC} investigate an alternative formulation with multiple QMIs involving instantaneous bounds, where the noise at each time step independently satisfies a QMI constraint.
Despite the substantial advancements in the literature, existing approaches have predominantly treated exogenous disturbances and measurement noise separately.
A unified framework that simultaneously addresses both types of noise within a common theoretical structure remains an open challenge.

In this study, we introduce \emph{data perturbation} as a novel and generalized noise model.
Data perturbation refers to additive noise to whole data subject to linear constraints, thereby encompassing and unifying a wide range of existing noise models.
Within this framework, we derive LMI conditions that are equivalent to data informativity for stabilization, as well as for achieving $\mathcal{H}_2$ and $\mathcal{H}_\infty$ performance guarantees via state feedback.
Our results generalize previous analyses involving exogenous disturbances~\cite{DDCQMI:Waarde2022_TAC_origin,DDCQMI:BISOFFI2022_Petersen,DDCQMI:Waarde2023_siam_qmi,DDCQMI:Steentjes2022_Cont_Sys_Let_Covariance} and extend those addressing measurement noise~\cite{DDCQMI:BISOFFI2024_CSL} by generalizing the energy-bound formulation to a broader class of QMI constraints, while also removing restrictive assumptions such as the requirement for a sufficiently large signal-to-noise ratio (SNR).
A central challenge in our analysis lies in characterizing the set of systems consistent with the observed data.
Under classical assumptions of exogenous disturbances or measurement noise, this set can be described as the solution set of a QMI.
However, this equivalence does not generally hold under the proposed data perturbation model.
To resolve this issue, we derive a sufficient condition under which the set of consistent systems can be equivalently represented via a QMI.
Further, we reveal that the sufficient condition is also necessary when the noise set satisfies a certain condition, which is met in the existing noise models.
This finding also clarifies the reason why QMI-based representations remain valid in the exogenous disturbance and measurement noise cases.
A further complication arises from the fact that, under data perturbation, the set of systems consistent with data may be non-convex, thereby precluding the use of the standard matrix S-procedure.
While this issue does not emerge under exogenous disturbances and is circumvented in the measurement noise setting by imposing SNR assumptions, we overcome it by developing a novel matrix S-procedure that does not require convexity of the system set.
This extension broadens the applicability of data informativity analysis to more general and realistic noise models.

Second, we establish conditions for data informativity with respect to stabilization via output feedback under the proposed data perturbation model.
Notably, our results do not rely on the full-rank condition on the data, which was assumed in prior analyses~\cite{DDCQMI:Steentjes2022_Cont_Sys_Let_Covariance,DDCQMI:Waarde2024_TAC_AR}, nor do they require the assumption of a sufficiently large SNR, as imposed in studies dealing with measurement noise~\cite{DDCQMI:Lidong2024}.
The output feedback setting introduces additional complexity arising from both the structure of the set of systems consistent with the data and the mismatch in the variable domains of the QMI formulations.
Due to the difference in variables, the standard matrix S-procedure is not directly applicable.
To address this issue, we develop a transformation that reformulates the QMI involving state-space matrices into an equivalent QMI involving coefficient matrices.
This transformation departs from the approaches taken in earlier works~\cite{DDCQMI:Steentjes2022_Cont_Sys_Let_Covariance,DDCQMI:Waarde2024_TAC_AR} and enables us to remove restrictive assumptions on the data.
As a result, our framework offers a more general and flexible analysis of output feedback stabilization under data perturbations.

Finally, we propose a generalized noise representation framework to handle structured noise matrices.
Within this framework, we derive sufficient conditions for data informativity under structured data perturbation in the form of an LMI.
The proposed approach encompasses instantaneously bounded noise models considered in~\cite{DDCQMI:BISOFFI2021} and also captures noises that have not been addresses in prior literature, including superposition of exogenous disturbance and measurement noise, Hankel-structured perturbation, and element-wise bounded perturbation.
To assess data informativity in this setting, we consider an outer QMI approximation of the combined noise region.
First, we derive an LMI condition that ensures the QMI solution set sufficiently covers the entire region affected by the structured noise.
Second, using our earlier results for data perturbations constrained by a single QMI, we establish an additional LMI condition that is sufficient for guaranteeing data informativity.
We develop a co-design strategy in which the LMI for noise region approximation and the LMI for verifying data informativity are solved simultaneously.
Remarkably, although this co-design leads to a computationally intractable bilinear matrix inequality (BMI) problem, we show that the BMI can equivalently be transformed into an LMI by leveraging the homogeneity of the noise approximation problem.
This reformulation ensures computational tractability while maintaining analytical rigor.

The main contributions of this study are as follows:
(1) We introduce the notion of data perturbation, a unified noise framework that encompasses both exogenous disturbances and measurement noise.
This framework enables the integration of data informativity analyses previously considered separately under exogenous disturbances and measurement noise into a single, cohesive analysis.
(2) We derive necessary and sufficient conditions for data informativity in the context of stabilization via state feedback under data perturbations.
Our results generalize prior findings and eliminate restrictive assumptions present in earlier works.
In the process, we provide a QMI representation of the set of systems consistent with the observed data under data perturbation, along with a novel matrix S-procedure that does not rely on any geometric assumptions about the system set.
(3) We derive the necessary and sufficient conditions for data informativity in the context of $\mathcal{H}_2$ and $\mathcal{H}_\infty$ performance guarantees under data perturbations.
(4) We extend our analysis to include stabilization via output feedback under data perturbations.
In addition to generalizing the noise models, we remove the full-rank data assumption that is required in prior studies, thereby broadening the applicability of our results.
(5) We derive sufficient conditions for data informativity for structured data perturbation by considering an outer QMI approximation.
We further propose a computationally tractable co-design method in which the noise approximation and data informativity discrimination problems are solved simultaneously.
(6) We provide several illustrative numerical examples to demonstrate the applicability of our results.
These examples include: (i) a visualization with a one-dimensional system,
(ii) controller synthesis without the large SNR assumption in the measurement noise analysis,
(iii) control performance evaluation with respect to variations in data length, and
(iv) verification of the effectiveness the co-design method for structured data perturbation.

\if0
\begin{enumerate}
\item We introduce the notion of data perturbation, a unified noise framework that encompasses both exogenous disturbances and measurement noise.
This framework enables the integration of data informativity analyses previously considered separately under exogenous disturbances and measurement noise into a single, cohesive analysis.
\item We derive necessary and sufficient conditions for data informativity in the context of stabilization via state feedback under data perturbations, expressed in the form of linear matrix inequalities (LMIs).
Our results generalize prior findings related to noise models and eliminate some of the restrictive assumptions present in earlier works.
In the process, we provide a quadratic matrix inequality (QMI) representation of the set of systems consistent with the observed data under data perturbation, along with a novel matrix S-procedure that does not rely on any geometric assumptions about the system set.
\item Building on the QMI representation of the system set and the newly proposed matrix S-procedure, we derive the necessary and sufficient conditions for data informativity in the context of $\mathcal{H}_2$ and $\mathcal{H}_\infty$ performance guarantees under data perturbations.
\item We extend our analysis to include stabilization via output feedback under data perturbations.
In addition to generalizing the noise models, we remove the full-rank data assumption that was required in prior studies, thereby broadening the applicability of our results.
\item To address more general noise scenarios, we consider situations in which the data are subject to multiple noise sources, represented by QMIs and linear transformations.
We derive sufficient conditions for data informativity in this context.
By approximating the entire noise region using a single QMI solution set, we show that the informativity analysis can be reduced to the analysis under data perturbations.
Furthermore, we introduce a co-design method in which the noise approximation and data informativity discrimination problems are solved simultaneously.
\item We provide several illustrative numerical examples to demonstrate the applicability of our results.
These examples include: (i) a visualization of the method applied to a one-dimensional system,
(ii) an example of controller synthesis made possible by removing the signal-to-noise ratio (SNR) assumption in the measurement noise analysis,
(iii) verification of the change in control performance with respect to variations in data length, and
(iv) an assessment of the effectiveness of the co-design method in the presence of multiple noises represented by QMIs and linear transformations.
\end{enumerate}
\fi

\if0
\IEEEPARstart{T}{here}  are two methods for designing controllers using data obtained from an unknown system: one is to identify a mathematical model from the data and design a controller based on the model, and the other is to design a controller directly from the data.
The latter is called (direct) data-driven control and has been attracting attention in recent years as it enables simpler formulations than the former because it does not go through a model~\cite{DDC:Hou2013}.
To achieve control objectives through data-driven control, data should possess sufficient information about the system to be controlled. 
Data informativity has been proposed as a theoretical framework for analyzing the richness of information contained in the data~\cite{DDC:Waarde2020_TAC_Dinfo,DDC:Waarde2023_Cont_sys_mag_informativity}.
In this framework, data is said to be informative when it can be used to design a controller with the data and achieve the desired control objectives, and conditions are derived to determine whether data is informative for various systems and control objectives.

Since the true system is unknown, the study of data informativity determines whether all systems consistent with the observed data, i.e., those capable of generating the data, can achieve the desired control objectives.
This is described by a relation between two sets of systems: the set of systems consistent with the data should be included in the set of systems that satisfy the control objective by applying a certain controller.
In particular, in the study of data informativity for linear systems that are corrupted by noise, it is typical to assume that the noise series satisfies a quadratic matrix inequality (QMI), an inequality involving a matrix-valued quadratic function~\cite{DDCQMI:Waarde2022_TAC_origin}.
Then, the set of systems consistent with data and the set of systems satisfying the control objective are characterized by QMIs, and their inclusion relation is transformed into an equivalent linear matrix inequality (LMI) condition by matrix S-procedure~\cite{DDCQMI:Waarde2022_TAC_origin,DDCQMI:Waarde2023_siam_qmi}.
This LMI condition not only determines data informativity but also has the aspect of a controller design.
The solution of the LMI can be used to design a controller that guarantees that the true system achieves the control objective.

We introduce previous studies on data informativity analysis under noise constrained by a QMI. 
The fundamental idea mentioned above has been proposed in~\cite{DDCQMI:Waarde2022_TAC_origin}, which has analyzed data informativity for stabilization and $\Hc{2}/\ \Hc{\infty}$ performance guarantees via state feedback under exogenous disturbances constrained by a QMI.
In~\cite{DDCQMI:BISOFFI2022_Petersen}, stabilization by state feedback has been conducted using Petersen’s lemma~\cite{Con:PETERSEN1987}, which plays a role similar to the matrix S-procedure, and the work~\cite{DDCQMI:Waarde2023_siam_qmi} provides a more detailed discussion of the matrix S-procedure.
Moreover, the work~\cite{DDCQMI:Waarde2022_TAC_origin} has been extended for control objectives to various cases, including stabilization by output feedback~\cite{DDCQMI:Steentjes2022_Cont_Sys_Let_Covariance,DDCQMI:Waarde2024_TAC_AR}, dissipativity analysis~\cite{DDCQMI:Waarde2024ECC_disp}, and model reduction~\cite{DDCQMI:Burohman2023_TAC_reducedorder}. 
On the other hand, some studies address measurement noise instead of exogenous disturbances. 
Specifically, under measurement noise satisfying a type of QMI known as the energy bound, stabilization via state feedback~\cite{DDCQMI:BISOFFI2024_CSL}, and output feedback~\cite{DDCQMI:Lidong2024} have been investigated. 
Additionally, whereas all of the aforementioned studies considered a single QMI constraint on the entire noise sequence, the work~\cite{DDCQMI:BISOFFI2021} has addressed instantaneous bound on noise, where the noise at each time step satisfies an independent QMI constraint.
Despite the significant progress made in these studies, they have examined exogenous disturbances and measurement noise separately. 
A unified framework that integrates these aspects has yet to be established.

In this study, we introduce a novel concept of noise termed \emph{data perturbation.}
Data perturbation is defined as additive noise that satisfies linear constraints, encompassing and unifying existing noise models.
First, we derive LMI conditions equivalent to data informativity for stabilization and $\Hc{2}$ and $\Hc{\infty}$ performance guarantees by state feedback where data from a linear system is corrupted by data perturbation constrained by a QMI.
Similar to previous analyses of data informativity, the LMI conditions we derive can be used for controller synthesis.
This result encompasses the analyses under exogenous disturbances~\cite{DDCQMI:Waarde2022_TAC_origin,DDCQMI:BISOFFI2022_Petersen,DDCQMI:Waarde2023_siam_qmi,DDCQMI:Steentjes2022_Cont_Sys_Let_Covariance} and, compared to the analysis under measurement noise~\cite{DDCQMI:BISOFFI2024_CSL}, extends the energy bound to a general QMI while removing the assumption that the signal-to-noise ratio (SNR) is sufficiently large.
A primary challenge in our analysis arises from the difficulty in handling the set of systems consistent with data. 
Specifically, whereas under exogenous disturbances and measurement noise, the set of systems consistent with data corresponds to the solution set of a certain QMI, this equivalence does not necessarily hold under data perturbation. 
To address this problem, we derive sufficient conditions for the system set to be equivalently characterized by a QMI by explicitly parameterizing the set containing data perturbations. 
The sufficient conditions explain the equivalence under exogenous disturbances and measurement noise.
Another issue was that the set of systems consistent with data was not necessarily convex, which made the use of the conventional matrix S-procedure impossible.
This issue, which does not arise in exogenous disturbance setting and is avoided in measurement noise setting using assumptions about the SNR, is resolved by proposing a new matrix S-procedure that does not require convexity in the system set.

Second, we derive necessary and sufficient conditions for data informativity for stabilization via output feedback under data perturbations. 
The results we obtained do not require the assumption of full-rank data, which was assumed in the analysis under exogenous disturbances~\cite{DDCQMI:Steentjes2022_Cont_Sys_Let_Covariance,DDCQMI:Waarde2024_TAC_AR}, nor the assumption of a sufficiently large SNR, which was assumed in the analysis under measurement noise~\cite{DDCQMI:Lidong2024}.
When dealing with stabilization via output feedback, challenges arise not only from the properties of the set of systems consistent with data but also from the difference between the QMI representations of system uncertainty and stabilized systems. 
The set of systems consistent with data is described by a QMI involving unknown coefficient matrices, whereas the set of systems stabilized by a controller is described by a QMI involving system matrices in the state-space representation. 
Since these QMIs involve different variables, the matrix S-procedure cannot be directly applied.
To resolve this issue, we transform the QMI for system matrices into a QMI for coefficient matrices. 
This approach differs from previous studies~\cite{DDCQMI:Steentjes2022_Cont_Sys_Let_Covariance,DDCQMI:Waarde2024_TAC_AR} and allows us to eliminate the assumptions on data.

Finally, we propose a more general representation of noise to handle the sum of multiple noises systematically. 
We derive sufficient conditions for data informativity in the form of an LMI where data is corrupted by multiple noises, represented by QMI and linear transformations.
As with other analyses, the LMI conditions can be used for controller synthesis.
Our proposed framework can represent situations that have not been previously considered, such as when data is corrupted by both exogenous disturbances and measurement noise or when each element of the noise is constrained separately.
Additionally, our framework can represent noise that satisfies the instantaneous bound studied in~\cite{DDCQMI:BISOFFI2021}. 
To assess the data informativity, we use an outer approximation of the entire noise region with a solution set of a single QMI. 
First, we derive an LMI condition sufficient for a QMI solution set to cover the entire noise region. 
Then, we can derive a single QMI constraint that the entire noise satisfies by the (fixed) solution of the LMI.
Second, by using the analysis under data perturbations constrained by a single QMI, we obtain another LMI condition sufficient for data informativity. 
However, it is challenging to establish clear criteria for selecting the solution of the first LMI.
Therefore, we adopt a co-design method in which the LMI for noise approximation and the LMI for discrimination of data informativity are solved simultaneously. 
Although solving these two LMIs simultaneously results in a bilinear matrix inequality (BMI) problem that cannot be computed efficiently, we show that this BMI can be converted into an equivalent LMI by taking advantage of the homogeneous nature of the approximation problem.

The main contributions are as follows:
\begin{enumerate}
    \item We propose data perturbation, a noise framework that includes exogenous disturbances and measurement noise.
    This allows us to integrate data informativity analysis under exogenous disturbances and measurement noise, which have been studied separately in previous works, into a single analysis.
    \item We derive necessary and sufficient conditions of data informativity for stabilization by state feedback under data perturbations in the form of LMIs. 
    Our results generalize the results of previous studies with respect to noise and remove some assumptions. 
    In the course of the analysis, we derive a QMI representation of the set of systems consistent with data under data perturbation, and a new matrix S-procedure that does not require any geometric properties for the set of systems consistent with data.
    \item Using the above QMI representation of the system set and the newly proposed matrix S-procedure, we derive the necessary and sufficient conditions of data informativity for $\Hc{2}$ and $\Hc{\infty}$ control under data perturbation.
    \item We also derive necessary and sufficient conditions for data informativity for stabilization by output feedback under data perturbations. 
    In addition to generalizations about noise, we remove the assumption of full-rank of the data, which was necessary in previous studies.
    \item To deal with more general noise, we assume that the data are subject to multiple noises represented by QMIs and linear transformations, then derive sufficient conditions for data informativeness in this situation.
    By approximating the entire noise by a single QMI solution set, we show that the informativity analysis in this noise setting can be reduced to other informativity analyses under data perturbations. 
    Furthermore, we show that we can adopt co-design method, in which the noise approximation problem and the data informativity discrimination problem are solved simultaneously.
    \item Several important numerical examples are presented.
    The numerical examples include a visualization of the method using a one-dimensional system, an example of controller synthesis made possible by removing the SNR assumption in the analysis under measurement noise, verification of the change in control performance with respect to changes in data length, and verification of the effect of co-design method under multiple noise represented by QMIs and linear transformations.
\end{enumerate}
\fi

A preliminary version of this work has been presented in~\cite{DDCQMI:Kaminaga2025_ACC}, but it only discusses data informativity for stabilization using state-feedback control.
In addition, this article provides rigorous proofs of the theoretical claims that are not included in the previous version.

This paper is organized as follows. 
Sec.~\ref{sec:preliminary} reviews basic properties of QMI.
Sec.~\ref{sec:quadratic_stabilization} introduces data perturbation and provides an equivalent LMI condition to data informativity for quadratic stabilization.
Sec.~\ref{sec:optimal} and Sec.~\ref{sec:inout} extend the result in Sec.~\ref{sec:quadratic_stabilization} to optimal control and output feedback, respectively.
Sec.~\ref{sec:structured} introduces structured data perturbation framework and derives a sufficient condition for data informativity.
Sec.~\ref{sec:exam} provides several numerical experiments, and finally, Sec.~\ref{sec:conclusion} concludes this paper.

\subsection*{Notation}
We denote the set of $n\times n$ symmetric matrices by $\mathbb{S}^n$, 
the $n\times n$ identity matrix by $I_n$, 
the $n\times n$ and $m\times n$ zero matrix by $0_n$ and $0_{m,n}$ respectively,
the transpose of  a matrix $M$ by $M^\top$, 
the trace of a matrix $M$ by $\tr M$,
the pseudo-inverse matrix of a matrix $M$ as $M^\dagger$, 
the positive and negative (semi) definiteness of a symmetric matrix $M$ by $M\succ (\succeq) 0$ and $M\prec (\preceq) 0$ respectively, 
the positive semidefinite square root of $A\succeq 0$ by $A^\frac{1}{2}$,
the block diagonal matrix with blocks $A_1,A_2,\dots$ by $\mathrm{diag}(A_1,A_2,\dots)$,
and the generalized Schur complement of $D$ in {\small $M=\sqmat{A}{B}{C}{D}$} by $M|D\coloneqq A-BD^\dagger C$.
The subscript is omitted when the dimension is clear from the context.

\section{Preliminary}\label{sec:preliminary}
We summarize important properties related to QMIs.
A more detailed discussion can be found in~\cite{DDCQMI:Waarde2023_siam_qmi}.
A QMI of the matrix $Z\inX{R}{r\times q}$ is defined as
\begin{equation}\label{qmi:dfn}
   \IqZ^\top  N\IqZ\succeq 0
\end{equation}
with $N\inX{S}{q+r}$.
The solution set of \eqref{qmi:dfn} is denoted as $\Zqr(N)\coloneqq\{Z\inX{R}{r\times q}|\eqref{qmi:dfn}\}$.
Similarly, the solution set of a strict QMI is denoted as $\Zqr^+(N)$.
We denote the submatrices of $N$ as {\small$N=\sqmat{N_{11}}{N_{12}}{N_{21} }{N_{22}}$}
such that $N_{11}\inX{S}{q}$ and $N_{22}\inX{S}{r}$.

First, we introduce the term \emph{matrix ellipsoid}, which characterizes geometric properties of QMIs.
We say a set $\Zqr(N)$ to be a matrix ellipsoid 
when the matrix $N\inX{S}{q+r}$ in the QMI \eqref{qmi:dfn} satisfies
\begin{equation}\label{qmi:N_condition}
   N_{22}\preceq 0,\quad \ker N_{22}\subseteq\ker N_{12},\quad N|N_{22}\succeq 0.
\end{equation}
When $\ker N_{22}\subseteq\ker N_{12}$ holds,
\begin{equation}\label{N_bunkai}
   N=\sqmat{I_q}{N_{12}N_{22}^\dagger}{0}{I_r}\sqmat{N|N_{22}}{0}{0}{N_{22}}\sqmat{I_q}{N_{12}N_{22}^\dagger}{0}{I_r}^\top
\end{equation}
also holds~\cite[Fact 6.5.4]{bernstein2009matrix}.
When \eqref{qmi:N_condition} holds, the QMI \eqref{qmi:dfn} can be written as
{\small\begin{align}
    \IqZ^\top\! N\!\IqZ\!=&\!N|N_{22}\!+\!(Z\!+\!N_{22}^\dagger N_{21})^\top\! N_{22}(Z\!+\!N_{22}^\dagger N_{21}),\label{qmi:tenkai}\\
    =&Q^2-(Z-Z_c)^\top R^2(Z-Z_c)\succeq 0,\label{qmi:Z_ellip}
\end{align}}\noindent
where $Q\coloneqq(N|N_{22})^\frac{1}{2},R\coloneqq (-N_{22})^\frac{1}{2}$, and $Z_c\coloneqq-N_{22}^\dagger N_{21}$.
The description \eqref{qmi:Z_ellip} is an extension of the standard description of ellipsoids~\cite[Eq. (3.9)]{boyd1994linear},
and implies that the set $\Zqr(N)$ is convex and symmetric with respect to the point $Z_c$.
We denote the set of the symmetric matrices satisfying \eqref{qmi:N_condition} as
\begin{equation*}
   \Pi_{q, r}\coloneqq \left\{\sqmat{N_{11}}{N_{12}}{N_{21} }{N_{22}}\inX{S}{q+r}\middle|\eqref{qmi:N_condition}\right\}.
\end{equation*}

The term matrix ellipsoid has originally been introduced in~\cite[Sec. 2.2]{DDCQMI:BISOFFI2021}, where stricter conditions $N_{22}\prec 0$ and $N|N_{22}\succ 0$ than \eqref{qmi:N_condition} are adopted.
This condition requires that the set $\Zqr(N)$ is bounded and has nonempty interior~\cite[Theorem 3.2]{DDCQMI:Waarde2023_siam_qmi}.
In this paper, however, we use the condition \eqref{qmi:N_condition} introduced in~\cite{DDCQMI:Waarde2023_siam_qmi} as the definition of matrix ellipsoids, based on its connection with the matrix S-lemma~\cite[Corollary 4.13]{DDCQMI:Waarde2023_siam_qmi} given below.
Note that the matrix ellipsoid defined with \eqref{qmi:N_condition} does not necessarily ensure the boundedness and the existence of nonempty interiors.

Next, we introduce the (strict) matrix S-lemma, proposed in~\cite{DDCQMI:Waarde2022_TAC_origin,DDCQMI:Waarde2023_siam_qmi}, as a generalization of standard S-lemma~\cite{MATH:Polik2007}.
\begin{proposition}[{\cite[Corollary 4.13]{DDCQMI:Waarde2023_siam_qmi}}]\label{prop:Slem_beta}
   Let $M, N\inX{S}{q+r}$.
   Assume that $N\in\Pi_{q, r}$ and $M_{22}\preceq 0$.
   Then, $\Zqr(N)\subseteq\Zqr^+(M)$ holds if and only if there exist scalars $\alpha\geq 0$ and $\beta>0$ such that
   \begin{equation}\label{slem_lmi}
     M-\alpha N\succeq \sqmat{\beta I_q}{0}{0}{0}.
   \end{equation}
\end{proposition}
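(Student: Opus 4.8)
The plan is to treat the two implications separately. Sufficiency is immediate, so the substance lies entirely in the necessity direction, for which I would exploit the matrix ellipsoid structure of $N$ together with a semidefinite separation argument.

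For sufficiency, suppose \eqref{slem_lmi} holds for some $\alpha\geq 0$, $\beta>0$. Given any $Z\in\Zqr(N)$, I would congruence-transform \eqref{slem_lmi} by $\IqZ$, i.e. multiply on the left and right by $\IqZ^\top$ and $\IqZ$. Since $\IqZ^\top N\IqZ\succeq 0$ and $\alpha\geq 0$, the term $\alpha\IqZ^\top N\IqZ$ is positive semidefinite, while a direct computation gives $\IqZ^\top\sqmat{\beta I_q}{0}{0}{0}\IqZ=\beta I_q\succ 0$. Hence $\IqZ^\top M\IqZ\succeq \beta I_q\succ 0$, so $Z\in\Zqr^+(M)$. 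Note this direction uses neither $N\in\Pi_{q,r}$ nor $M_{22}\preceq 0$.

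For necessity I would first reduce to a centered ellipsoid. Using $\ker N_{22}\subseteq\ker N_{12}$, set $Z_c=-N_{22}^\dagger N_{21}$ and apply the substitution $Z\mapsto Z-Z_c$, which corresponds to the block-lower-triangular congruence $S=\sqmat{I_q}{0}{-Z_c}{I_r}$ acting simultaneously on $M$ and $N$ as $M\mapsto S^\top M S$, $N\mapsto S^\top N S$. This congruence preserves the sign of the $(2,2)$ block, membership in $\Pi_{q,r}$, the inclusion $\Zqr(N)\subseteq\Zqr^+(M)$, and — because $\sqmat{\beta I_q}{0}{0}{0}$ has zero off-diagonal and $(2,2)$ blocks — the target inequality \eqref{slem_lmi} itself. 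By \eqref{N_bunkai} the transformed $N$ is block diagonal, $N=\mathrm{diag}(Q^2,-R^2)$ with $Q^2=N|N_{22}\succeq 0$ and $R^2=-N_{22}\succeq 0$, so that without loss of generality $\Zqr(N)=\{Z: Q^2-Z^\top R^2 Z\succeq 0\}$ is the ellipsoid centered at the origin.

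The core step is to produce the multipliers from the inclusion. I would rephrase membership in subspace terms: $Z\in\Zqr(N)$ iff the graph subspace $\im\IqZ$ lies in the cone $\{v: v^\top N v\geq 0\}$, and $Z\in\Zqr^+(M)$ iff $v^\top M v>0$ for every nonzero $v\in\im\IqZ$. Arguing by contradiction, if no feasible $(\alpha,\beta)$ exists then the convex ray $\{M-\alpha N:\alpha\geq 0\}$ is disjoint from the convex set of matrices dominating $\sqmat{\beta I_q}{0}{0}{0}$ for some $\beta>0$; a separating hyperplane then yields a nonzero $P\succeq 0$ with $\langle N,P\rangle\geq 0$, $\langle M,P\rangle\leq 0$, and a nondegeneracy condition on its $(1,1)$ block inherited from the $\beta I_q$ term. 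The main obstacle — and the very reason the hypothesis $N\in\Pi_{q,r}$ is needed — is that this $P$ is a priori an arbitrary positive semidefinite matrix, whereas to contradict the inclusion I must realize it as an actual graph $\im\IqZ$ on which $M$ fails to be positive definite. This rank-reduction/realizability step is exactly where the losslessness of the matrix S-procedure can break down in general; I expect it to be the hardest and most technical part, and I would resolve it using the convexity and central symmetry of the matrix ellipsoid (via \eqref{qmi:Z_ellip} and \eqref{N_bunkai}), a Dines-type convexity of the joint image of the two matrix quadratic forms, and the assumption $M_{22}\preceq 0$, which controls the recession directions so that the inclusion over the possibly unbounded ellipsoid remains compatible at infinity. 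A final compactness/perturbation argument would then secure a strictly positive $\beta$ rather than merely $\beta\geq 0$.
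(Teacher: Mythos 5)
First, a point of reference: the paper does not prove this proposition at all --- it is imported verbatim, with citation, from \cite[Corollary 4.13]{DDCQMI:Waarde2023_siam_qmi} (the paper's own original contribution in this direction is Theorem~\ref{thm:Slem_new}, which has different hypotheses). So your proposal must stand as an independent proof, and it does not. The sufficiency direction is fine: congruence of \eqref{slem_lmi} with $\IqZ$ gives $\IqZ^\top M\IqZ\succeq\beta I_q\succ 0$ for every $Z\in\Zqr(N)$, and you are right that this uses neither hypothesis; it is the same computation as the ``if'' part of the paper's proof of Theorem~\ref{thm:Slem_new}.

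The necessity direction has a genuine gap, and it is worse than a deferred technicality. Your separating-hyperplane step yields a nonzero $P\succeq 0$ with $\tr(NP)\geq 0$ and $\tr(MP)\leq 0$, but this certificate is too weak to contradict $\Zqr(N)\subseteq\Zqr^+(M)$: for instance, whenever $N_{22}$ is singular, the matrix $P=\mathrm{diag}(0_q,vv^\top)$ with $0\neq v\in\ker N_{22}$ satisfies all three conditions (since $M_{22}\preceq 0$ forces $v^\top M_{22}v\leq 0$), yet such a $P$ exists even when the inclusion holds and \eqref{slem_lmi} is feasible. Its ``support'' $\im\matvec{0}{v}$ is a vertical direction, not a graph $\im\IqZ$, so no element of $\Zqr(N)$ violating strict positivity of $M$ can be extracted from it. Ruling out such degenerate separators --- i.e., showing the multiplier can be realized, after rank reduction and a Dines-type argument, as an actual $Z\in\Zqr(N)$ with $\IqZ^\top M\IqZ\not\succ 0$, and then upgrading $\beta\geq 0$ to $\beta>0$ --- is the entire mathematical content of the cited corollary, occupying several lemmas in \cite{DDCQMI:Waarde2023_siam_qmi}; listing the tools you ``would'' use does not discharge it. A smaller but real slip: your centering congruence has the wrong sign. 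With $S=\sqmat{I_q}{0}{-Z_c}{I_r}$ and $Z_c=-N_{22}^\dagger N_{21}$, one computes $(S^\top NS)_{12}=2N_{12}\neq 0$ in general; the matrix that block-diagonalizes $N$ consistently with \eqref{N_bunkai} is $\sqmat{I_q}{0}{Z_c}{I_r}$ (the inverse transpose of the triangular factor there), under which $Z\in\Zqr(\cdot)\Leftrightarrow Z+Z_c\in\Zqr(N)$.
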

In the application to control theory, the matrix $Z$ corresponds to system matrices.
Proposition~\ref{prop:Slem_beta} transforms the inclusion relation between two sets of systems $\Zqr(N)$ and $\Zqr^+(M)$ into the tractable LMI condition \eqref{slem_lmi}.

\section{Data Informativity for Quadratic Stabilization under Data Perturbations}\label{sec:quadratic_stabilization}
In this section, we introduce the notion of \emph{data perturbation}, a generalized framework that encompasses exogenous disturbance and measurement noise treated in existing studies.
Further, we examine the data informativity problem for quadratic stabilization under data perturbation.
We show that the data informativity for quadratic stabilization can be characterized through an LMI.
The results presented in this section serve as a basis for the analyses in the subsequent sections.

\subsection{Data Perturbation}\label{subsec:dptb_dfn}

Consider a discrete-time linear time-invariant system $x_+=A^*x+B^*u$ with state $x\in\mathbb{R}^n$ and input $u\in\mathbb{R}^m$.
We assume that the system matrices $(A^*,B^*)$ are unknown, but instead, (noisy) input and output data are available.
For the system, the tuple of ``clean'' $T$-long offline batch data $(X_+^*,X^*,U^*)$ obeys the dynamics, i.e.,
\begin{equation}\label{eq:cleandata}
 X_{+}^*=A^*X^*+B^*U^*,
\end{equation}
where $X_{+}^*\in\mathbb{R}^{n\times T}, X^*\in\mathbb{R}^{n\times T}, U^*\in\mathbb{R}^{m\times T}$.
In this study, we suppose that measured data $(X_+,X,U)$ is perturbed by additive noise $(\Delta_{X_+},\Delta_X,\Delta_U)$ such that
\begin{equation}\label{eq:measured_data}
 X_+=X_+^*+\Delta_{X_+},\quad X=X^*+\Delta_X,\quad U=U^*+\Delta_U,
\end{equation}
where the dimensions of the matrices are compatible with the clean data.

We define the notion of \emph{data perturbation} as the concatenated perturbation matrix
\begin{equation}
 \Delta\coloneqq [\Delta_{X_+}^\top\ -\Delta_X^\top\ -\Delta_U^\top]^\top.
\end{equation}
For generality, we make prior knowledge that data perturbation is constrained within a linear subspace such that
\begin{equation}
 \im\Delta\subseteq\im E
\end{equation}
where $E\inX{R}{(2n+m)\times \hat{n}}$ is a known matrix.
This constraint is not restrictive; rather, it allows the data perturbation to represent exogenous disturbance and measurement noise considered in existing studies.
Indeed, taking $E=[I_n\ 0_{n,n+m}]^\top$ leads to
\begin{equation}\label{eq:system_noise}
 X_{+}=A^*X+B^*U+\Delta_{X_+},
\end{equation}
which is equivalent to the case under exogenous disturbances~\cite{DDCQMI:Waarde2023_siam_qmi}.
Alternatively, taking $E=I_{2n+m}$ implies that all data are perturbed during the measurement process, which is equivalent to the case under measurement noise~\cite{DDCQMI:BISOFFI2024_CSL}.
Moreover, the data perturbation framework is not limited to existing perturbation models and can also represent novel perturbation classes, such as noise occurring during the actuation process.

Additionally, we assume that the data perturbation is constrained by a QMI.
Building on the previous assumption, we introduce the following assumption.
\begin{assumption}\label{ass:noiseset}
 Data perturbation satisfies
  \begin{equation}\label{noise_set}
    \Delta\in\mathcal{D}\coloneqq \left\{E\hDel|\hDel^\top\in\Zab{\hat{n}}{T}(\hPhi)\right\}.
  \end{equation}
  with known matrices $E\inX{R}{(2n+m)\times \hat{n}}$ and $\hPhi\in\mathbf{\Pi}_{\hat{n},T}$.
\end{assumption}
The QMI~$\hDel^\top\in\Zab{\hat{n}}{T}(\hPhi)$ can capture various constraints on noise, including energy bounds and the sample covariance matrix bounds.
We aim at designing a state-feedback controller $u=Kx$ that guarantees a predefined performance criterion with the perturbed data $(X_+,X,U)$ and the prior knowledge of the data perturbation.

\subsection{Problem Formulation under Data Perturbation}
We formulate the data informativity problem for quadratic stabilization.
From~\eqref{eq:cleandata}, we observe that the measured data and the data perturbation satisfy the relationship $[I_n\ A^*\ B^*]\mathbf{X}=[I_n\ A^*\ B^*]\Delta$ where $\mathbf{X}=[X_+^\top\ -X^\top\ -U^\top]^\top$.
Accordingly, we define the system matrices $(A,B)$ consistent with the data as those satisfying
\begin{equation}\label{stateeq_mat2}
  [I\ A\ B]\mathbf{X}=[I\ A\ B]\Delta
\end{equation}
for some $\Delta\in\mathcal{D}$.
Similarly, the set of systems consistent with data is defined by
\begin{equation}\label{dfn_Sigma}
  \Sigma\coloneqq\left\{(A, B)\middle| \exists \Delta\in\mathcal{D}\st \eqref{stateeq_mat2}\right\}.
\end{equation}
Note that $\Sigma$ is nonempty because $(A^*,B^*)\in\Sigma$.

We define data informativity for quadratic stabilization under data perturbation as follows.
\begin{definition}\label{dfn:datainfo_qstab}
  Data $(X_+,X,U)$ is said to be informative for quadratic stabilization under data perturbation when there exists a pair of a positive definite matrix $P\succ 0\inX{S}{n}$ and a controller $K\inX{R}{m\times n}$ such that it satisfies the Lyapunov inequality
  \begin{equation}\label{qstab}
    P-(A+BK)P(A+BK)^\top\succ 0
  \end{equation}
  for any $(A,B)\in\Sigma$.
\end{definition}

The problem here is to derive a computationally tractable condition that characterizes the data informativity for quadratic stabilization under data perturbation.

\emph{Remark:}
We compare our problem setting with those in the existing works~\cite{DDCQMI:Waarde2023_siam_qmi,DDCQMI:BISOFFI2024_CSL}.
First, when $E=[I_n\ 0_{n,n+m}]^\top$, which corresponds to the exogenous disturbance case, our problem formulation is exactly the same as that in~\cite{DDCQMI:Waarde2023_siam_qmi}.
Next, when $E=I_{2n+m}$, which corresponds to the measurement noise case, our problem represents a less restrictive scenario than that in~\cite{DDCQMI:BISOFFI2024_CSL}.
In~\cite{DDCQMI:BISOFFI2024_CSL}, the measurement noise $\Delta$ is assumed to satisfy the energy constraint $\Delta\Delta^\top\preceq \Theta$
with a known positive semidefinite matrix $\Theta$.
Additionally, they assume
\begin{equation}\label{eiv:assumption}
  \matvec{X}{U}\matvec{X}{U}^\top-\Theta_{22}\succ 0
\end{equation}
where $\Theta_{22}$ is the (2,2)-block submatrix of $\Theta$.
This assumption implies that the signal-to-noise ratio (SNR) is sufficiently large.
In contrast, our problem setting encompasses this situation without relying on the restrictive assumption~\eqref{eiv:assumption}, because the energy constraint can be written as a QMI
\begin{equation}\label{eiv:noise_qmi2}
  \matvec{I_{2n+m}}{\Delta^\top}^\top\sqmat{\Theta}{0}{0}{-I_T}\matvec{I_{2n+m}}{\Delta^\top}\succeq 0
\end{equation}
where {\small$\sqmat{\Theta}{0}{0}{-I_T}\in\mathbf{\Pi}_{2n+m,T}$}.

\subsection{QMI Characterization of Consistent System Set}\label{subsec:qstab_sysset}

We provide a QMI characterization of the consistent system set.
Note that, in the exogenous disturbance case, the noise matrix is uniquely determined from~\eqref{eq:system_noise} given system matrices $(A,B)$ and measured data $(X_+,X,U)$.
Leveraging this uniqueness, we can readily transform the QMI of the noise into a corresponding QMI for the consistent system matrices~\cite[Lemma 4]{DDCQMI:Waarde2022_TAC_origin}.
However, in the context of data perturbation, the noise matrix is not uniquely determined from~\eqref{stateeq_mat2}, and hence it is not straightforward to derive a QMI characterization of the consistent system set in this case.

First, we characterize $\Sigma$ using the explicit parametrization of $\Zab{\hat{n}}{T}(\hPhi)$ in~\cite[Theorem 3.3]{DDCQMI:Waarde2023_siam_qmi}.
It claims that $\hDel^\top\in\Zab{\hat{n}}{T}(\hPhi)$ if and only if there exist matrices $M_1,M_2\inX{R}{\hat{n}\times T}$ such that $M_1M_1^\top\preceq I$ and
\begin{equation}\label{eq:explicit_hDel}
{\small
    \hDel =\underbrace{
    -\hPhi_{12}\hPhi_{22}^\dagger+(\hPhi|\hPhi_{22})^\frac{1}{2}M_1(-\hPhi_{22})^{-\frac{1}{2}}
    }_{=:\hat{\Delta}_\mathcal{R}(M_1)}+\underbrace{
    M_2(I-\hPhi_{22}\hPhi_{22}^\dagger)
    }_{=:\hat{\Delta}_\mathcal{N}(M_2)},
    }
\end{equation}
where $(-\hPhi_{22})^{-\frac{1}{2}}=\left((-\hPhi_{22})^\dagger\right)^\frac{1}{2}$.
Equation \eqref{eq:explicit_hDel} gives a parametrization of $\hDel$ with parameters $M_1$ and $M_2$.
Note that $\hat{\Delta}_\mathcal{R}(M_1)$ and $\hat{\Delta}_{\mathcal{N}}(M_2)$ correspond to bounded and unbounded terms, respectively, from $M_1M_1^\top \preceq I$.
We derive a condition equivalent to $(A,B)\in\Sigma$ by substituting \eqref{eq:explicit_hDel} into $\mathcal{D}$ and $\Sigma$.
\begin{lemma}\label{lem:M12}
    Under Assumption~\ref{ass:noiseset}, $(A,B)\in\Sigma$ if and only if there exist matrices $M_1,M_2\inX{R}{\hat{n}\times T}$ such that $M_1M_1^\top\preceq I$ and
    \begin{subequations}
        \begin{align}
            &[I\ A\ B](\mathbf{X}-E\hat{\Delta}_{\mathcal{R}}(M_1))\hPhi_{22}=0,\label{eq:M1}\\
            &[I\ A\ B](\mathbf{X}-E\hat{\Delta}_{\mathcal{N}}(M_2))(I-\hPhi_{22}\hPhi_{22}^\dagger)=0\label{eq:M2}
        \end{align}
    \end{subequations}
\end{lemma}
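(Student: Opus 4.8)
The plan is to eliminate the existential quantifier over $\Delta\in\mathcal{D}$ in the definition \eqref{dfn_Sigma} of $\Sigma$ by substituting the explicit parametrization \eqref{eq:explicit_hDel}, and then to \emph{decouple} the resulting single matrix equation into the two conditions \eqref{eq:M1} and \eqref{eq:M2} by projecting onto the complementary subspaces $\im\hPhi_{22}$ and $\ker\hPhi_{22}$.

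First I would restate membership in $\Sigma$ in terms of the parameters. By Assumption~\ref{ass:noiseset} together with the parametrization \cite[Theorem 3.3]{DDCQMI:Waarde2023_siam_qmi}, $(A,B)\in\Sigma$ holds if and only if there exist $M_1,M_2\inX{R}{\hat{n}\times T}$ with $M_1M_1^\top\preceq I$ such that $\Delta=E(\hat{\Delta}_{\mathcal{R}}(M_1)+\hat{\Delta}_{\mathcal{N}}(M_2))$ satisfies \eqref{stateeq_mat2}. Writing
\[
 W\coloneqq [I\ A\ B]\bigl(\mathbf{X}-E\hat{\Delta}_{\mathcal{R}}(M_1)-E\hat{\Delta}_{\mathcal{N}}(M_2)\bigr),
\]
condition \eqref{stateeq_mat2} becomes exactly $W=0$, so $(A,B)\in\Sigma$ reduces to the solvability of $W=0$ over $M_1,M_2$ with $M_1M_1^\top\preceq I$. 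It then remains to show that $W=0$ is equivalent to the pair \eqref{eq:M1}--\eqref{eq:M2}.

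The key step, which I expect to carry the technical weight, is a decoupling based on pseudo-inverse identities. Let $P\coloneqq \hPhi_{22}\hPhi_{22}^\dagger$ and $P^\perp\coloneqq I-P$ denote the orthogonal projections onto $\im\hPhi_{22}$ and $\ker\hPhi_{22}$, respectively (both symmetric, since $\hPhi_{22}$ is symmetric). I would verify the two annihilation identities
\[
 \hat{\Delta}_{\mathcal{R}}(M_1)\,P^\perp=0,\qquad \hat{\Delta}_{\mathcal{N}}(M_2)\,\hPhi_{22}=0.
\]
The second follows immediately from $\hPhi_{22}\hPhi_{22}^\dagger\hPhi_{22}=\hPhi_{22}$. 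For the first, the term $-\hPhi_{12}\hPhi_{22}^\dagger P^\perp$ vanishes via $\hPhi_{22}^\dagger\hPhi_{22}\hPhi_{22}^\dagger=\hPhi_{22}^\dagger$, and the term carrying $(-\hPhi_{22})^{-\frac{1}{2}}$ vanishes because $(-\hPhi_{22})^{-\frac{1}{2}}$ annihilates $\ker\hPhi_{22}=\im P^\perp$, hence $(-\hPhi_{22})^{-\frac{1}{2}}P^\perp=0$. Consequently, right-multiplying the definition of $W$ by $\hPhi_{22}$ kills the $M_2$-term and recovers \eqref{eq:M1}, while right-multiplying by $P^\perp$ kills the $M_1$-term and recovers \eqref{eq:M2}; that is, \eqref{eq:M1}$\Leftrightarrow W\hPhi_{22}=0$ and \eqref{eq:M2}$\Leftrightarrow WP^\perp=0$.

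Finally I would close the equivalence $W=0\Leftrightarrow\bigl(W\hPhi_{22}=0 \text{ and } WP^\perp=0\bigr)$. The forward implication is trivial. For the converse, $W\hPhi_{22}=0$ is equivalent to $WP=0$ (since $WP=W\hPhi_{22}\hPhi_{22}^\dagger$ and $WP\hPhi_{22}=W\hPhi_{22}$), so $WP=0$ and $WP^\perp=0$ give $W=W(P+P^\perp)=0$. Combined with the decoupling, this yields the stated equivalence between $(A,B)\in\Sigma$ and the existence of $M_1,M_2$ satisfying $M_1M_1^\top\preceq I$, \eqref{eq:M1} and \eqref{eq:M2}; in the ``if'' direction one sets $\hDel=\hat{\Delta}_{\mathcal{R}}(M_1)+\hat{\Delta}_{\mathcal{N}}(M_2)$ and invokes \cite[Theorem 3.3]{DDCQMI:Waarde2023_siam_qmi} to certify $\hDel^\top\in\Zab{\hat{n}}{T}(\hPhi)$, hence $\Delta=E\hDel\in\mathcal{D}$. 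The main obstacle is the careful handling of the support/pseudo-inverse identities for $(-\hPhi_{22})^{-\frac{1}{2}}$ and $(\hPhi|\hPhi_{22})^{\frac{1}{2}}$ needed to justify the decoupling cleanly.
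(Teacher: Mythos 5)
Your proposal is correct and follows essentially the same route as the paper's proof: eliminate the existential quantifier via the explicit parametrization of $\Zab{\hat{n}}{T}(\hPhi)$, split the single equation $[I\ A\ B](\mathbf{X}-E\hDel)=0$ into its components on $\im\hPhi_{22}$ and $\ker\hPhi_{22}$ (multiplication by $\hPhi_{22}$ and $I-\hPhi_{22}\hPhi_{22}^\dagger$), and substitute to obtain \eqref{eq:M1} and \eqref{eq:M2}. The only difference is that you make explicit the annihilation identities $\hat{\Delta}_{\mathcal{R}}(M_1)(I-\hPhi_{22}\hPhi_{22}^\dagger)=0$ and $\hat{\Delta}_{\mathcal{N}}(M_2)\hPhi_{22}=0$, which the paper's substitution step leaves implicit.
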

\begin{proof}
    From the definition of $\mathcal{D}$ and $\Sigma$, $(A,B)\in\Sigma$ if and only if there exists $\hDel\inX{R}{\hat{n}\times T}$ such that $\hDel^\top\in\Zab{\hat{n}}{T}(\hPhi)$ and 
    \begin{equation}\label{eq:existence_M12}
        [I\ A\ B](\mathbf{X}-E\hDel)=0.
    \end{equation}
    Therefore, $(A,B)\in\Sigma$ if and only if there exist $M_1, M_2\inX{R}{\hat{n}\times T}$ such that $M_1M_1^\top \preceq I$ and \eqref{eq:existence_M12} with $\hDel=\hat{\Delta}_{\mathcal{R}}(M_1)+\hat{\Delta}_{\mathcal{N}}(M_2)$.
    Equation~\eqref{eq:existence_M12} is equivalent to
    $[I\ A\ B](\mathbf{X}-E\hDel)\hPhi_{22}=0$ and $[I\ A\ B](\mathbf{X}-E\hDel)(I-\hPhi_{22}\hPhi_{22}^\dagger)=0$.
    By substituting $\hDel=\hat{\Delta}_{\mathcal{R}}(M_1)+\hat{\Delta}_{\mathcal{N}}(M_2)$ into them, we obtain \eqref{eq:M1} and \eqref{eq:M2}.
\end{proof}

We introduce system sets corresponding to the conditions on $M_1$ and $M_2$ given by \eqref{eq:M1} and \eqref{eq:M2}:
\begin{align}
    \SigmaR&\coloneqq\{(A,B)|\exists M_1\inX{R}{\hat{n}\times T}\st M_1M_1^\top\preceq I,\eqref{eq:M1}\},\\
    \SigmaN&\coloneqq\{(A,B)|\exists M_2\inX{R}{\hat{n}\times T}\st\eqref{eq:M2}\}.
\end{align}
It follows immediately from Lemma~\ref{lem:M12} that
\begin{equation}\label{Sigma_SigmaRN}
    \Sigma=\SigmaR\cap\SigmaN.
\end{equation}
The set $\SigmaR$ has a desirable property that it can be characterized by a QMI.
\begin{lemma}\label{lem:sys_set}
    Let Assumption~\ref{ass:noiseset} hold. 
    Then we have
    \begin{equation}
        \SigmaR=\left\{(A,B)\middle|[A\ B]^\top\in\Zab{n}{n+m}(N)\right\},
    \end{equation}
    where $N$ is defined as
    \begin{equation}\label{dfn_N}
        N\coloneqq [E\ \mathbf{X}]\hPhi[E\ \mathbf{X}]^\top.
    \end{equation}
\end{lemma}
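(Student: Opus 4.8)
The plan is to prove both inclusions simultaneously by rewriting the defining condition of $\SigmaR$ until it becomes the QMI $F N F^\top\succeq 0$, where I abbreviate $F:=[I\ A\ B]$. First I would note that, by the definition of the QMI solution set, $[A\ B]^\top\in\Zab{n}{n+m}(N)$ is precisely $F N F^\top\succeq 0$, since the stacked matrix $\begin{bmatrix} I_n \\ [A\ B]^\top\end{bmatrix}$ equals $F^\top$. Substituting \eqref{dfn_N} gives $F N F^\top=[W\ V]\hPhi[W\ V]^\top$ with $W:=FE$ and $V:=F\mathbf{X}$. Because $\hPhi$ satisfies \eqref{qmi:N_condition}, I would apply the congruence factorization \eqref{N_bunkai}; writing $S:=V+W\hPhi_{12}\hPhi_{22}^\dagger$, $Q:=(\hPhi|\hPhi_{22})^{1/2}$, and $R:=(-\hPhi_{22})^{1/2}$, the cross terms collapse and yield the clean identity $F N F^\top=W(\hPhi|\hPhi_{22})W^\top+S\hPhi_{22}S^\top=(WQ)(WQ)^\top-(SR)(SR)^\top$.

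Next I would simplify the membership condition for $\SigmaR$, i.e. the existence of $M_1$ with $M_1M_1^\top\preceq I$ satisfying \eqref{eq:M1}. Inserting the explicit form of $\hat{\Delta}_{\mathcal{R}}(M_1)$ from \eqref{eq:explicit_hDel} and multiplying out, the left-hand side of \eqref{eq:M1} becomes $(WQM_1-SR)R$; the routine but delicate point here is the pseudo-inverse identity $R^\dagger R^2=R$, valid since $R$ is a symmetric PSD square root. Hence \eqref{eq:M1} reads $(WQM_1-SR)R=0$. Post-multiplying by $R^\dagger$ and using $RR^\dagger R=R$ shows this is equivalent to $WQM_1\Pi=SR$ with $\Pi:=R^\dagger R$; and since replacing $M_1$ by $M_1\Pi$ preserves both the equation and the bound $M_1M_1^\top\preceq I$ (because $\Pi\preceq I$), the condition is equivalent to solvability of $WQM_1=SR$ for some $M_1$ with $M_1M_1^\top\preceq I$.

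The crux is then a matrix-equation lemma: for given $P,C$ there exists $M$ with $MM^\top\preceq I$ and $PM=C$ if and only if $CC^\top\preceq PP^\top$. The forward direction is immediate, since $PM=C$ and $MM^\top\preceq I$ give $CC^\top=PMM^\top P^\top\preceq PP^\top$. For the converse I would take $M=P^\dagger C$: the inequality $CC^\top\preceq PP^\top$ forces $\ker P^\top\subseteq\ker C^\top$, hence $\mathrm{range}\,C\subseteq\mathrm{range}\,P$ and $PP^\dagger C=C$, so $PM=C$; and $MM^\top=P^\dagger CC^\top(P^\dagger)^\top\preceq P^\dagger PP^\top(P^\dagger)^\top=P^\dagger P\preceq I$. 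Applying this with $P=WQ$ and $C=SR$ yields $(A,B)\in\SigmaR$ iff $(SR)(SR)^\top\preceq(WQ)(WQ)^\top$, which by the identity from the first step is exactly $F N F^\top\succeq 0$, i.e. $[A\ B]^\top\in\Zab{n}{n+m}(N)$. I expect the main obstacle to be the second step's pseudo-inverse bookkeeping---reducing \eqref{eq:M1} to $WQM_1=SR$ while justifying that the range and kernel projections behave as claimed---together with verifying the norm bound $MM^\top\preceq I$ in the converse of the key lemma.
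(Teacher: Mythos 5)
Your proposal is correct and follows essentially the same route as the paper's proof: both expand $FNF^\top$ via the factorization \eqref{N_bunkai} into $(WQ)(WQ)^\top-(SR)(SR)^\top$, both reduce membership in $\SigmaR$ to solvability of a contraction equation, and both reconcile the $R$ versus $R^\dagger$ mismatch by multiplying with $R$ and $R^\dagger$ and using the projection $RR^\dagger$. The only difference is organizational: you prove the key contraction lemma ($\exists M,\ MM^\top\preceq I,\ PM=C \Leftrightarrow CC^\top\preceq PP^\top$) inline, whereas the paper invokes it as \cite[Lemma A.1 (a)]{DDCQMI:Waarde2023_siam_qmi}, and you normalize the equation before applying the lemma rather than after.
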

\begin{proof}
    Denote $\left\{(A,B)\middle|[A\ B]^\top\in\Zab{n}{n+m}(N)\right\}$ by $\tSigmaR$.
    In this proof, we introduce the notation $S=[I\ A\ B], Q=(\hPhi|\hPhi_{22})^\frac{1}{2}, R=(-\hPhi_{22})^\frac{1}{2}$, and $\hDel_c=-\hPhi_{12}\hPhi_{22}^\dagger$ for simplicity.
    Then, $(A,B)\in\SigmaR$ if and only if there exists $M_1\inX{R}{\hat{n}\times T}$ such that $M_1M_1^\top\preceq I$ and
    \begin{equation}\label{prf_sigmaR}
      S(\mathbf{X}-E\hDel_c)R^2=SEQM_1R,
    \end{equation}
    where we have used $(-\hPhi_{22})^{-\frac{1}{2}}=R^\dagger$ and $R^\dagger RR=RR^\dagger R=R$ since $-\hPhi_{22}$ and $R$ are positive semidefinite and symmetric.
    Next, by decomposing $\hPhi$ as in~\eqref{N_bunkai}, we obtain
    \begin{align}
      (A,B)\in\tSigmaR
      &\Leftrightarrow S[E\ \mathbf{X}]\hPhi[E\ \mathbf{X}]^\top S^\top\succeq 0\\
      &\Leftrightarrow SEQ[\star]^\top-S(\mathbf{X}-E\hDel_c)R[\star]^\top\succeq 0,\label{prf_tsigr}
    \end{align}
    where we denote $MM^\top$ as $M[\star]^\top$ for a matrix $M$.
    From \cite[Lemma A.1 (a)]{DDCQMI:Waarde2023_siam_qmi} to \eqref{prf_tsigr}, $(A,B)\in\tSigmaR$ if and only if there exists $\tilde{M}_1\inX{R}{\hat{n}\times T}$ such that $\tilde{M}_1\tilde{M}_1^\top\preceq I$ and
    \begin{equation}\label{prf_tsigmaR}
      S(\mathbf{X}-E\hDel_c)R=SEQ\tilde{M}_1.
    \end{equation}
    
    When $(A,B)\in\SigmaR$, there exists $M_1$ such that $M_1M_1^\top\preceq I$ and \eqref{prf_sigmaR}.
    Then, 
    \begin{equation}
      S(\mathbf{X}-E\hDel_c)RRR^\dagger=S(\mathbf{X}-E\hDel_c)R=SEQM_1RR^\dagger
    \end{equation}
    holds. 
    Thus, $\tilde{M}_1\coloneqq M_1RR^\dagger$ satisfies $\tilde{M}_1\tilde{M}_1^\top\preceq I$ and \eqref{prf_tsigmaR}, and $(A,B)\in\tSigmaR$.
    On the other hand, when $(A,B)\in\tSigmaR$, there exists $\tilde{M}_1$ such that $\tilde{M}_1\tilde{M}_1^\top\preceq I$ and \eqref{prf_tsigmaR}.
    Then, we can obtain \eqref{prf_sigmaR} by defining $M_1$ as $M_1\coloneqq \tilde{M}_1$ and multiplying \eqref{prf_tsigmaR}, which leads to $(A,B)\in\SigmaR$.
\end{proof}

Since $\Sigma\subseteq\SigmaR$ from \eqref{Sigma_SigmaRN}, Lemma~\ref{lem:sys_set} provides a conservative characterization of $\Sigma$ via the QMI defined by $N$.
This characterization enables the derivation of a conservative condition for quadratic stabilization of $\Sigma$.
Furthermore, when $\Sigma=\SigmaR$, the condition becomes non-conservative.
We present a condition of the data perturbation set $\mathcal{D}$ such that $\SigmaN$ contains the entire space, leading to the equivalence $\Sigma=\SigmaR$.
\begin{theorem}\label{thm:sys_set}
    Let Assumption~\ref{ass:noiseset} hold.
    If $\im E\supseteq\im[I_n\ 0_{n,n+m}]^\top$
    or $\hPhi_{22}\prec 0$, then $\SigmaN=\mathbb{R}^{n\times n}\times\mathbb{R}^{n\times m}$, which implies $\Sigma=\SigmaR$.
\end{theorem}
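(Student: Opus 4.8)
The plan is to unpack the defining condition~\eqref{eq:M2} of $\SigmaN$ and show that, under either hypothesis, it imposes no constraint on $(A,B)$. Writing $S\coloneqq[I\ A\ B]$ and $P_N\coloneqq I-\hPhi_{22}\hPhi_{22}^\dagger$, I first observe that $P_N$ is a symmetric idempotent (the orthogonal projection onto $\ker\hPhi_{22}$, using that $\hPhi_{22}$ is symmetric), so that $\hat{\Delta}_{\mathcal{N}}(M_2)P_N=M_2P_NP_N=\hat{\Delta}_{\mathcal{N}}(M_2)$. Substituting this into~\eqref{eq:M2} collapses it to the linear equation $S\mathbf{X}P_N=SEM_2P_N$ in the free matrix $M_2\inX{R}{\hat{n}\times T}$; crucially, unlike the parameter $M_1$ appearing in $\SigmaR$, the parameter $M_2$ is subject to no norm constraint. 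Hence $(A,B)\in\SigmaN$ precisely when this equation is solvable for $M_2$, and the goal reduces to exhibiting a solution for every $(A,B)$.

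For the case $\hPhi_{22}\prec 0$, I note that $\hPhi_{22}$ is then invertible, so $\hPhi_{22}\hPhi_{22}^\dagger=I$ and $P_N=0$. The reduced equation becomes $0=0$, which holds for all $(A,B)$ with, e.g., $M_2=0$; thus $\SigmaN=\mathbb{R}^{n\times n}\times\mathbb{R}^{n\times m}$ immediately.

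For the case $\im E\supseteq\im[I_n\ 0_{n,n+m}]^\top$, the key identity is $S[I_n\ 0_{n,n+m}]^\top=I_n$, which follows directly from the block form $S=[I\ A\ B]$. Since each column of $[I_n\ 0_{n,n+m}]^\top$ lies in $\im E$, there exists $F\inX{R}{\hat{n}\times n}$ with $EF=[I_n\ 0_{n,n+m}]^\top$, and therefore $SEF=I_n$. Setting $M_2\coloneqq FS\mathbf{X}$ then gives $SEM_2P_N=SEFS\mathbf{X}P_N=S\mathbf{X}P_N$, so the reduced equation is satisfied for arbitrary $(A,B)$; again $\SigmaN=\mathbb{R}^{n\times n}\times\mathbb{R}^{n\times m}$.

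Finally, in either case the conclusion $\Sigma=\SigmaR$ follows from the decomposition $\Sigma=\SigmaR\cap\SigmaN$ in~\eqref{Sigma_SigmaRN}, since intersecting with the whole space is inert. I expect the only delicate point to be the bookkeeping in the first paragraph---verifying the simplification $\hat{\Delta}_{\mathcal{N}}(M_2)P_N=\hat{\Delta}_{\mathcal{N}}(M_2)$ and confirming that $M_2$ is genuinely unconstrained---after which both cases are short and the $\im E$ case is essentially a right-inverse construction.
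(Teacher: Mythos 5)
Your proof is correct and takes essentially the same approach as the paper's: both reduce \eqref{eq:M2} to the solvability of a linear equation in the unconstrained parameter $M_2$, and settle it in the first case via a right inverse $F$ with $EF=[I_n\ 0_{n,n+m}]^\top$ (so that $[I\ A\ B]EF=I_n$) and in the second case via the vanishing of $I-\hPhi_{22}\hPhi_{22}^\dagger$, before concluding $\Sigma=\SigmaR$ from \eqref{Sigma_SigmaRN}. The only cosmetic difference is that the paper phrases solvability as the image inclusion $\im[I\ A\ B]\mathbf{X}(I-\hPhi_{22}\hPhi_{22}^\dagger)\subseteq\im[I\ A\ B]E$ and verifies it, whereas you exhibit the explicit solution $M_2=F[I\ A\ B]\mathbf{X}$.
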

\begin{proof}
    There exists $M_2\inX{R}{\hat{n}\times T}$ such that \eqref{eq:M2} if and only if 
    \begin{equation}\label{prf_SigmaN}
        \im [I\ A\ B]\mathbf{X}(I-\hPhi_{22}\hPhi_{22}^\dagger)\subseteq\im[I\ A\ B]E.
    \end{equation}
    When $\im E\supseteq\im[I_n\ 0_{n,n+m}]^\top$, there exists $F$ such that $EF=[I_n\ 0_{n,n+m}]^\top$.
    Then
    $\im[I\ A\ B]E\supseteq\im [I\ A\ B]EF=\im I = \mathbb{R}^n$
    for any $(A,B)$.
    Thus, \eqref{prf_SigmaN} is satisfied for any $(A,B)$, and $\SigmaN=\mathbb{R}^{n\times n}\times\mathbb{R}^{n\times m}$ holds.
    When $\hPhi_{22}\prec 0$, we have
        $[I\ A\ B]\mathbf{X}(I-\hPhi_{22}\hPhi_{22}^\dagger)=0$.
    Then, \eqref{prf_SigmaN} is satisfied for any $(A,B)$ in this case as well, and $\SigmaN=\mathbb{R}^{n\times n}\times\mathbb{R}^{n\times m}$.
    Further, it is clear $\Sigma = \SigmaR$ from~\eqref{Sigma_SigmaRN} in these cases.
\end{proof}
Theorem~\ref{thm:sys_set} elucidates the fact that previous researches that treat exogenous disturbance~\cite{DDCQMI:Waarde2022_TAC_origin,DDCQMI:Waarde2023_siam_qmi} and measurement noise~\cite{DDCQMI:BISOFFI2024_CSL} can provide non-conservative QMI characterizations of $\Sigma$.
In the exogenous disturbance case with $E=[I_n\ 0_{n,n+m}]^\top$ and the measurement noise case with $E=I_{2n+m}$, we have $\im E\supseteq\im[I_n\ 0_{n,n+m}]^\top$, and then $\Sigma=\SigmaR$ due to Theorem~\ref{thm:sys_set}.

\emph{Remark:}
Consider the case where $E=[1,0_{1,2n+m-1}]^\top$ and $\hPhi_{22}$ is singular.
Then, \eqref{prf_SigmaN} is converted into
\begin{equation}\label{rem_nontrivialexample}
    [0_{n-1,1}\ I_{n-1}](X_+-AX-BU)(I-\hPhi_{22}\hPhi_{22}^\dagger)=0,
\end{equation}
as $[I\ A\ B]E=[1\ 0_{1,n-1}]^\top$.
Since $\SigmaN=\{(A,B)|\eqref{prf_SigmaN}\}$, $\SigmaN$ corresponds to the solution set of~\eqref{rem_nontrivialexample}. 
Clearly, $\SigmaN$ is not the entire space and imposes non-trivial constraints on the system matrices, which are challenging to characterize using QMIs.

\subsection{LMI Characterization of Data Informativity}
From Definition~\ref{dfn:datainfo_qstab}, data $(X_+,X,U)$ is informative for quadratic stabilization when the Lyapunov inequality~\eqref{qstab} holds for all $(A, B)\in\Sigma$ with a common $P\succ 0$ and $K$.
The solutions to the Lyapunov inequality~\eqref{qstab} can be characterized by a QMI:
\begin{equation}\label{qmi_qstab}
    \IAB^\top M\IAB\succ 0,
\end{equation}
where
\begin{equation}\label{dfn_M_qstab}
    M\coloneqq\sqmat{P}{0}{0}{-\matvec{I_n}{K}P\matvec{I_n}{K}^\top}.
\end{equation}
Note that $M\in\mathbf{\Pi}_{n,n+m}$ from $P\succ 0$.
Both the set of systems satisfying the Lyapunov inequality and $\SigmaR (\supseteq\Sigma)$ are characterized by the QMIs.
Therefore, 
\begin{equation}\label{inclusion_relation}
    \Zab{n}{n+m}(N)\subseteq\Zab{n}{n+m}^+(M)
\end{equation} 
with some $P\succ 0$ and $K$ is a sufficient condition for data informativity for quadratic stabilization.
Note that this condition is also necessary when $\Sigma=\SigmaR$ holds.
In the existing study that treats exogenous disturbance~\cite{DDCQMI:Waarde2023_siam_qmi}, the matrix S-lemma given by Proposition~\ref{prop:Slem_beta} transforms the inclusion relationship into an LMI, and in the study that treats measurement noise~\cite{DDCQMI:BISOFFI2024_CSL}, Petersen's lemma~\cite{DDCQMI:BISOFFI2022_Petersen} plays a similar role.

Nevertheless, those propositions cannot be applied to our problem in a straightforward manner.
They require the system set $\SigmaR$ to be a matrix ellipsoid, i.e., $N\in\mathbf{\Pi}_{n,n+m}$, which is not generally satisfied under data perturbation.
To demonstrate this, we decompose $E$ as 
    $E=[E_+^\top\ E_-^\top]^\top,E_+\inX{R}{n\times \hat{n}},E_-\inX{R}{(n+m)\times \hat{n}}$.
Then $N$ defined in~\eqref{dfn_N} can be expressed as
\begin{equation*}
  N=\left[\begin{array}{cc}
    E_+&X_+\\ E_-&-Z_-
  \end{array}\right]
  \sqmat{\hPhi_{11}}{\hPhi_{12}}{\hPhi_{21}}{\hPhi_{22}}
  \left[\begin{array}{cc}
    E_+&X_+\\ E_-&-Z_-
  \end{array}\right]^\top,
\end{equation*}
where $Z_-=[X ^\top\ U ^\top]^\top$.
The property $N\in\mathbf{\Pi}_{n,n+m}$ requires the condition $N_{22}=[E_-\ -Z_-]\hPhi[E_-\ -Z_-]^\top\preceq 0$, but it does not generally hold.

\emph{Remark:}
The exogenous disturbance case, where $E=[I_n\ 0_{n,n+m}]^\top$, leads to $E_- = 0$ and hence $N_{22}=-Z_-\hPhi_{22}Z_-\preceq 0$ is guaranteed.
In the study under measurement noise~\cite{DDCQMI:BISOFFI2024_CSL}, the assumption~\eqref{eiv:assumption} is crucial for ensuring that $N\in\mathbf{\Pi}_{n,n+m}$.
Let $E=I_{2n+m}$ and {\small $\hPhi=\sqmat{\Theta}{0}{0}{-I}$} as given in \eqref{eiv:noise_qmi2}.
Then, we obtain $N_{22}=\Theta_{22}-Z_-Z_-^\top$.
Therefore, the assumption~\eqref{eiv:assumption} is equivalent to $N_{22}\prec 0$, and implies that $\Sigma$ is a bounded matrix ellipsoid.

To resolve this issue, we derive a novel matrix S-lemma that does not require $N\in\mathbf{\Pi}_{n,n+m}$:
\begin{theorem}\label{thm:Slem_new}
    Let $M,N\inX{S}{q+r}$.
    Assume that $\Zqr(N)$ is nonempty, $M_{22}\preceq 0$, and $\ker M_{22}\subseteq\ker M_{12}$.
    Then $\Zqr(N)\subseteq\Zqr^+(M)$ holds if and only if there exist scalars $\alpha\geq 0$ and $\beta>0$ such that
   \begin{equation}\label{slem_lmi_new}
        M-\alpha N\succeq \sqmat{\beta I_q}{0}{0}{0}.
   \end{equation}
\end{theorem}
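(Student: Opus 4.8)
The plan is to prove the two implications separately, with the forward (``if'') direction being a one-line substitution and the converse being the substantive part. For sufficiency, suppose $\alpha\geq0$ and $\beta>0$ satisfy \eqref{slem_lmi_new}. Given any $Z\in\Zqr(N)$, I would multiply \eqref{slem_lmi_new} on the left and right by $\IqZ^\top$ and $\IqZ$, obtaining
\[
\IqZ^\top M\IqZ\succeq \alpha\,\IqZ^\top N\IqZ+\beta I_q .
\]
Since $\alpha\geq0$ and $\IqZ^\top N\IqZ\succeq0$, the first term on the right is positive semidefinite, so the left-hand side is at least $\beta I_q\succ0$; hence $Z\in\Zqr^+(M)$. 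This uses none of the structural hypotheses on $M$ and yields $\Zqr(N)\subseteq\Zqr^+(M)$.

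For necessity I would first normalize $M$. Because $\ker M_{22}\subseteq\ker M_{12}$, congruence by $\sqmat{I_q}{0}{Z_c}{I_r}$ with $Z_c=-M_{22}^\dagger M_{21}$ block-diagonalizes $M$ as in \eqref{N_bunkai}, leaves the right-hand block of \eqref{slem_lmi_new} invariant, and shifts the inclusion by $Z\mapsto Z-Z_c$; so I may assume $M=\mathrm{diag}(M|M_{22},M_{22})$ with $M_{22}\preceq0$. The key preliminary observation is that $M|M_{22}\succ0$: since $\Zqr(N)$ is nonempty and contained in $\Zqr^+(M)$, there is some $Z_0$ with $(M|M_{22})+Z_0^\top M_{22}Z_0\succ0$, and as $M_{22}\preceq0$ the added term is negative semidefinite, forcing $M|M_{22}\succ0$. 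Consequently $M\in\Pi_{q,r}$, and by \eqref{qmi:tenkai} the target becomes the interior of a matrix ellipsoid, $\Zqr^+(M)=\{Z:\ Q^2\succ Z^\top R^2 Z\}$ with $Q=(M|M_{22})^{\frac12}\succ0$ and $R=(-M_{22})^{\frac12}$; equivalently it is the transformed operator-norm ball $\{Z:\ \norm{R Z Q^{-1}}<1\}$. This is the geometric structure I intend to exploit.

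The crux is then to certify the inclusion ``$\IqZ^\top N\IqZ\succeq0\Rightarrow Z$ lies in this ellipsoid'' through a single multiplier. My plan is a separation argument in $\mathbb{S}^q$: examine the map $Z\mapsto\bigl(\IqZ^\top N\IqZ,\ \IqZ^\top M\IqZ\bigr)$, use the inclusion to conclude that the second coordinate cannot fail to be positive definite while the first is positive semidefinite, and separate the associated cones to extract $\alpha\geq0$; the strictness built into $\Zqr^+(M)$ then upgrades the separating functional to deliver the margin $\beta>0$. The \textbf{main obstacle} is that $\Zqr(N)$ is assumed neither convex nor bounded, so the standard matrix S-lemma (Proposition~\ref{prop:Slem_beta}), which places the ellipsoid hypothesis on the \emph{source} matrix $N$, does not apply and ordinary convex duality \emph{on the source} is unavailable. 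I would resolve this exactly as the abstract advertises, by exploiting the geometric description of QMI solution sets: because the \emph{target} $\Zqr^+(M)$ is a convex ellipsoidal ball, losslessness of the S-procedure can be established from the ellipsoidal structure of $M$ alone, independently of any convexity of $\Zqr(N)$. Two technical subtleties I would track are the possible singularity of $M_{22}$ (hence of $R$), which demands the pseudoinverse bookkeeping already used around \eqref{N_bunkai}--\eqref{qmi:tenkai}, and the role of the nonemptiness hypothesis on $\Zqr(N)$ as a Slater-type condition ensuring the separating functional is nontrivial.
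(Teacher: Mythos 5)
Your ``if'' direction is correct, and the opening moves of your ``only if'' direction are also correct and coincide with the paper's: from a point $Z_0\in\Zqr(N)\subseteq\Zqr^+(M)$ and $M_{22}\preceq 0$ one gets $M|M_{22}\succ 0$, hence $M\in\Pi_{q,r}$, and the congruence normalization of $M$ is legitimate (it preserves the block $\mathrm{diag}(\beta I_q,0)$ and translates both solution sets). The gap is in the crux: extracting the multiplier $\alpha$. Your plan is a separation argument on the image of $Z\mapsto\bigl(\IqZ^\top N\IqZ,\ \IqZ^\top M\IqZ\bigr)$, but cone separation needs convexity of (the relevant hull of) this joint image, and for \emph{matrix-valued} quadratic maps no Dines/Yakubovich-type convexity theorem is available --- that failure is precisely why every matrix S-lemma in the literature carries structural hypotheses on $N$ or $M$ beyond sign conditions. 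Your stated resolution, that ``losslessness of the S-procedure can be established from the ellipsoidal structure of $M$ alone, independently of any convexity of $\Zqr(N)$,'' is not an argument; it is a restatement of the theorem being proved. As written, the necessity direction is circular at exactly the point where the work has to happen.

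The paper closes this gap with a different and concrete idea that your proposal does not contain: Lemma~\ref{lem:Ninfo}. It shows that the inclusion $\Zqr(N)\subseteq\Zqr(M)$, together with nonemptiness of $\Zqr(N)$, $M\in\Pi_{q,r}$, and $M_{22}$ having a negative eigenvalue, \emph{forces} $N\in\Pi_{q,r}$ --- i.e., the source set, about which nothing was assumed, is secretly a matrix ellipsoid whenever it fits inside a genuinely curved ellipsoidal target. The proof is a rank-one perturbation argument: one takes $Z'=Z_0+xy^\top$ (and a two-direction variant $Z'=Z_0+x_1y_1^\top+x_2y_2^\top$) and derives contradictions from the boundedness of $\Zqr(M)$ in directions where $M_{22}$ is negative, establishing in turn $N_{22}\preceq 0$, $\ker N_{22}\subseteq\ker N_{12}$, and $N|N_{22}\succeq 0$. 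Once $N\in\Pi_{q,r}$ is known, the standard matrix S-lemma (Proposition~\ref{prop:Slem_beta}) applies verbatim and yields $\alpha\geq 0$, $\beta>0$; the degenerate case $M_{22}=0$ is handled separately with $\alpha=0$. So the theorem is proved by transferring structure from the target to the source and then invoking the existing result --- not by a new duality argument on the target side. To repair your proposal you would need to either prove an analogue of Lemma~\ref{lem:Ninfo} or supply a genuine convexity/separation theorem for the matrix-valued joint image, which is not known to hold.
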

Theorem~\ref{thm:Slem_new} and Proposition~\ref{prop:Slem_beta} derive the same LMI condition~\eqref{slem_lmi_new}, but their preconditions are different.
Compared with Proposition~\ref{prop:Slem_beta}, Theorem~\ref{thm:Slem_new} does not require $N\in\mathbf{\Pi}_{q,r}$ but instead additionally requires $\ker M_{22}\subseteq\ker M_{12}$.
Note that nonemptiness of $\Zqr(N)$ is a necessary condition for $N\in\mathbf{\Pi}_{n,n+m}$~\cite[Theorem 3.2]{DDCQMI:Waarde2023_siam_qmi}.
In this sense, Theorem~\ref{thm:Slem_new} requires a weaker condition for $N$, which corresponds to the set of systems consistent with data, but a stronger condition for $M$, which corresponds to the control objective, than Proposition~\ref{prop:Slem_beta}. 
The following lemma is the key to proving Theorem~\ref{thm:Slem_new}.

\begin{lemma}\label{lem:Ninfo}
   Let $M,\,N\inX{S}{q+r}$.
   Assume that $\Zqr(N)\subseteq\Zqr(M)$, $\Zqr(N)$ is nonempty, $M$ has at least one negative eigenvalue, and $M\in\Pi_{q,r}$.
   Then $N\in\Pi_{q,r}$.
\end{lemma}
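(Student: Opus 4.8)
The plan is to verify directly the three defining conditions of $\Pi_{q,r}$ for $N$, namely $N_{22}\preceq 0$, $\ker N_{22}\subseteq\ker N_{12}$, and $N|N_{22}\succeq 0$. The third condition is free once the first two hold: under $\ker N_{22}\subseteq\ker N_{12}$ the QMI admits the completed-square form \eqref{qmi:tenkai}, so $\IqZ^\top N\IqZ=N|N_{22}+(Z+N_{22}^\dagger N_{21})^\top N_{22}(Z+N_{22}^\dagger N_{21})$, and when $N_{22}\preceq 0$ the quadratic term is $\preceq 0$; hence every feasible $Z$ satisfies $0\preceq\IqZ^\top N\IqZ\preceq N|N_{22}$, and nonemptiness of $\Zqr(N)$ forces $N|N_{22}\succeq 0$. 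Thus the real work is to establish $N_{22}\preceq 0$ and $\ker N_{22}\subseteq\ker N_{12}$.

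The leverage comes entirely from $M$. Since $M\in\Pi_{q,r}$, I would write $\Zqr(M)$ in the ellipsoid form \eqref{qmi:Z_ellip}, i.e. $(Z-Z_c)^\top R^2(Z-Z_c)\preceq Q^2$ with $R=(-M_{22})^\frac{1}{2}$. This yields a uniform bound on $RZ$ over $\Zqr(M)$, hence over $\Zqr(N)\subseteq\Zqr(M)$, while $\Zqr(M)$ is invariant under shifts along $\ker M_{22}$. The hypothesis that $M$ has a negative eigenvalue is used precisely here: by the congruence \eqref{N_bunkai} the number of negative eigenvalues of $M$ equals $\mathrm{rank}\,M_{22}$, so $M_{22}\neq 0$ and $R\neq 0$; the bound is therefore a genuine constraint in the directions $\im M_{22}$. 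The target then becomes: if $N$ violates either of the first two conditions, the set $\Zqr(N)$ must contain points along which $\|RZ\|\to\infty$, contradicting this bound.

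I would prove this by constructing escaping families of feasible matrices. Take $Z_0\in\Zqr(N)$ and an offending direction $v\in\mathbb{R}^r$ (either $v^\top N_{22}v>0$, or $v\in\ker N_{22}$ with $N_{12}v\neq 0$), and move along matrix rays $Z_0+tvc^\top$. A short computation gives $\IqZ^\top N\IqZ$ evaluated at $Z_0+tvc^\top$ equal to $(\text{value at }Z_0)+t(pc^\top+cp^\top)+t^2(v^\top N_{22}v)cc^\top$ with $p=(N_{12}+Z_0^\top N_{22})v$; in the kernel case the choice $c=p=N_{12}v$ makes this $\succeq 0$ for all $t\geq 0$, producing a genuine ray. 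When $M_{22}v\neq 0$ such a ray already forces $\|RZ\|\to\infty$ and we are done. The main obstacle is the case where the offending direction lies in $\ker M_{22}$: there the ray stays inside $\Zqr(M)$ and yields no contradiction directly, so one must show that positive curvature of $N_{22}$ or a nonvanishing $N_{12}v$ transverse only to $\im M_{22}$ nevertheless \emph{leaks} into unbounded growth of $RZ$. This requires a two-parameter construction that trades the slack generated by large motion along $\ker M_{22}$ to grow $Z$ within $\im M_{22}$ while keeping $\IqZ^\top N\IqZ\succeq 0$; the difficulty is intrinsically matrix-theoretic, since rank-one perturbations need not preserve positive semidefiniteness, and I expect to rely on the structural QMI calculus of~\cite{DDCQMI:Waarde2023_siam_qmi} (the explicit parametrization and the positive-semidefinite completion lemma) to certify feasibility of the constructed matrices.
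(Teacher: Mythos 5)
Your framework is not in fact a different route from the paper's: the reduction of $N|N_{22}\succeq 0$ to the first two conditions via \eqref{qmi:tenkai}, the ray computation $\mathcal{F}_N(Z_0+tvc^\top)=\mathcal{F}_N(Z_0)+t(pc^\top+cp^\top)+t^2(v^\top N_{22}v)cc^\top$ with the choice $c=p$, and the contradiction from escape in directions where $M_{22}$ is negative (the paper phrases it as boundedness of the feasible ray parameter rather than boundedness of $\|RZ\|$, but it is the same mechanism) all appear, essentially verbatim, in the paper's proof. The problem is that your argument stops exactly where the paper's proof does its real work. You correctly isolate the hard case --- an offending direction $v$ lying in $\ker M_{22}$ --- but you do not resolve it; you only assert that a two-parameter construction should exist and that you ``expect'' to certify it with external QMI machinery. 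This is not a peripheral case: for the kernel condition $\ker N_{22}\subseteq\ker N_{12}$ it is the \emph{only} case, since any $x_1$ with $N_{22}x_1=0$, $N_{12}x_1\neq 0$ and $M_{22}x_1\neq 0$ is already excluded by your easy case. As written, your proposal establishes only the implication $v^\top M_{22}v<0\Rightarrow v^\top N_{22}v\le 0$ and its consequences, not the lemma.

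For the record, both hard cases admit elementary resolutions and need none of the parametrization machinery you invoke. First, if $v^\top N_{22}v>0$ with $M_{22}v=0$, pick $x_2$ with $x_2^\top M_{22}x_2<0$ (possible since, as you yourself note, a negative eigenvalue of $M\in\Pi_{q,r}$ forces $M_{22}\neq 0$) and perturb the \emph{direction}, not the point: $(v+\epsilon x_2)^\top M_{22}(v+\epsilon x_2)=\epsilon^2x_2^\top M_{22}x_2<0$ for all $\epsilon\neq 0$, while $(v+\epsilon x_2)^\top N_{22}(v+\epsilon x_2)>0$ for small $\epsilon$ by continuity, so the easy ray case applies to $v+\epsilon x_2$ and gives the contradiction. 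Second, for $N_{22}x_1=0$, $N_{12}x_1\neq 0$, $M_{22}x_1=0$ (hence $M_{12}x_1=0$ because $M\in\Pi_{q,r}$), set $y=N_{12}x_1$, $\Omega=N_{12}+Z_0^\top N_{22}$, $p=x_2^\top N_{22}x_2<0$, $y_2=-\tfrac{1}{p}\Omega x_2+by$, and take $Z'=Z_0+ax_1y^\top+x_2y_2^\top$; a direct expansion gives $\mathcal{F}_N(Z')=\mathcal{F}_N(Z_0)+(2a+pb^2)yy^\top-\tfrac{1}{p}(\Omega x_2)(\Omega x_2)^\top$, so for every $b$ there exists $a\geq 0$ keeping $Z'$ feasible for $N$, while in $\mathcal{F}_M(Z')$ the $x_1$-terms vanish and the term $(x_2^\top M_{22}x_2)y_2y_2^\top$ drives it indefinite as $b\to\infty$. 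This is exactly the slack-trading construction you anticipated, but it must be exhibited and verified --- including the strict inequality $p<0$, which requires running your easy ray case also for directions with $v^\top N_{22}v=0$ and $M_{22}v\neq 0$, a version you did not state --- before the proof can be considered complete.
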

\begin{proof}
   We denote $\mathcal{F}_M(Z)$ and $\mathcal{F}_N(Z)$ as
   \begin{align*}
      \mathcal{F}_M(Z)&\coloneqq [I_q\ Z^{\top}]M[I_q\ Z^{\top}]^{\top}\\
      &=M|M_{22}+(Z+M_{22}^\dagger M_{21})^\top M_{22}(Z+M_{22}^\dagger M_{21}),\\
      \mathcal{F}_N(Z)&\coloneqq [I_q\ Z^{\top}]N[I_q\ Z^{\top}]^{\top}\\
      &=N_{11}+N_{12}Z+Z^\top N_{21}+Z^\top N_{22}Z,
   \end{align*}
   respectively, where we have used \eqref{qmi:tenkai}.
   The condition $\Zqr(N)\subseteq\Zqr(M)$ is equivalent to
   \begin{equation}\label{include}
      \mathcal{F}_N(Z)\succeq 0\Rightarrow \mathcal{F}_M(Z)\succeq 0.
   \end{equation}
    Due to nonemptiness of $\Zqr(N)$, there exists a matrix $Z_0\inX{R}{r\times q}$ such that $\mathcal{F}_N(Z_0)\succeq 0$.
   Then, $\mathcal{F}_M(Z_0)\succeq 0$ also holds.

   First, we show that
   \begin{equation}\label{MN}
      x^\top M_{22}x<0\Rightarrow x^\top N_{22}x<0
   \end{equation}
   for any $x\inX{R}{r}$.
   We assume that there exists a vector $x\inX{R}{r}$ such that $x^\top M_{22}x<0$ and $x^\top N_{22}x\geq 0$, 
   and derive a contradiction.
   We define $Z'=Z_0+xy^\top$ with an arbitrary $y\inX{R}{q}$.
   Then, we have that
   \begin{align*}
      \mathcal{F}_M(Z')&= \mathcal{F}_M(Z_0)+(x^\top M_{22}x)yy^\top+\Psi xy^\top+yx^\top \Psi^\top,\\
      \mathcal{F}_N(Z')&= \mathcal{F}_N(Z_0)+(x^\top N_{22}x)yy^\top+\Omega xy^\top+yx^\top \Omega^\top,
   \end{align*}
   where $\Psi$ and $\Omega$ are denoted as $\Psi=(Z_0+M_{22}^\dagger M_{21})^\top M_{22}$ and $\Omega=N_{12}+Z_0^\top N_{22}$ respectively.
   When $\Omega x=0$, we obtain $\mathcal{F}_M(Z')\succeq 0$ for all $y\inX{R}{q}$ because \eqref{include} holds and $\mathcal{F}_N(Z')\succeq \mathcal{F}_N(Z_0)\succeq 0$ for all $y\inX{R}{q}$.
   However, since $x^\top M_{22}x<0$, the range of $y$ satisfying $\mathcal{F}_M(Z')\succeq 0$ is bounded, which contradicts the arbitrariness of $y$.
   When $\Omega x\neq 0$, we set $y$ to $y=a\Omega x$ with an arbitrary $a\geq 0$.
   Then, we obtain $\mathcal{F}_M(Z')\succeq 0$ for all $a\geq 0$ because \eqref{include} holds and $\mathcal{F}_N(Z')\succeq \mathcal{F}_N(Z_0)\succeq 0$ for all $a\geq 0$.
   However, the range of $a$ satisfying $\mathcal{F}_M(Z')\succeq 0$ is also bounded, which contradicts the arbitrariness of $a$.
   Thus, we obtain \eqref{MN}.

   Next, we show $N_{22}\preceq 0$.
   We prove that
   \begin{equation}\label{MN0}
      x^\top M_{22}x=0\Rightarrow x^\top N_{22}x\leq0,
   \end{equation}
   for all $x\inX{R}{r}$, 
   because $M_{22}\preceq 0$, \eqref{MN} and \eqref{MN0} imply $N_{22}\preceq 0$.
   Note that $x^\top M_{22}x=0$ if and only if $M_{22}x=0$ because of $M_{22}\preceq 0$.
   We assume that there exists $x_1\inX{R}{r}$ such that $M_{22}x_1=0$ and $x_1^\top N_{22}x_1> 0$, 
   and derive a contradiction.
   Since $M_{22}$ has a negative eigenvalue, there exists $x_2$ such that $x_2^\top M_{22}x_2<0$.
   Because $x_1^\top N_{22}x_1> 0$ is assumed and $x^\top N_{22} x$ is continuous in $x$,  
   there exists a scalar $\epsilon\in\mathbb{R}$ such that $(x_1+\epsilon x_2)^\top N_{22}(x_1+\epsilon x_2)>0$.
   However, we obtain $(x_1+\epsilon x_2)^\top N_{22}(x_1+\epsilon x_2)<0$ because \eqref{MN} holds and $(x_1+\epsilon x_2)^\top M_{22}(x_1+\epsilon x_2)=\epsilon^2x_2^\top M_{22}x_2<0$.
   This is a contradiction and we obtain \eqref{MN0}.
   Therefore, we obtain $N_{22}\preceq 0$. 

   Next, we show $\ker N_{22}\subseteq \ker N_{12}$.
   This condition is obvious when $N_{22}$ is non-singular. 
   Thus, we consider the case where $N_{22}$ is singular.
   We assume that there exists $x_1\inX{R}{r}$ such that $N_{22}x_1=0$ and $N_{12}x_1\neq 0$, 
   and derive a contradiction.
   Note that $M_{22}x_1=0$ due to $M_{22}\preceq 0$ and the contrapositive of \eqref{MN}.
   Since $M_{22}$ has a negative eigenvalue, there exists $x_2$ such that $x_2^\top M_{22}x_2<0$.
   We define $Z'$ as $Z'=Z_0+x_1y_1^\top+x_2y_2^\top$ with arbitrary $y_1,y_2\inX{R}{q}$.
   Then, we have that
   \begin{align*}
      \mathcal{F}_M(Z')=& \mathcal{F}_M(Z_0)+(x_1y_1^\top+x_2y_2^\top)^\top M_{22}(x_1y_1^\top+x_2y_2^\top)\\
      &+\Psi(x_1y_1^\top+x_2y_2^\top)+(x_1y_1^\top+x_2y_2^\top)^\top \Psi^\top\\
      =&\mathcal{F}_M(Z_0)+(x_2^\top M_{22}x_2)y_2y_2^\top+\Psi x_2y_2^\top+y_2x_2^\top \Psi^\top,
    \end{align*}
    \begin{align*}
      \mathcal{F}_N(Z')=&\mathcal{F}_N(Z_0)+(y_1x_1^\top+y_2x_2^\top)N_{22}(x_1y_1^\top+x_2y_2^\top)\\
      &+\Omega(x_1y_1^\top+x_2y_2^\top)+(x_1y_1^\top+x_2y_2^\top)^\top \Omega^\top\\
      =&\mathcal{F}_N(Z_0)+N_{12}x_1y_1^\top+y_1x_1^\top N_{21}\\
      &+py_2y_2^\top+\Omega x_2y_2^\top+y_2x_2^\top \Omega^\top,
   \end{align*}
   where $p=x_2^\top N_{22}x_2$. 
   Note that $p<0$ due to \eqref{MN} and $x_2^\top M_{22}x_2<0$.
   We set $y_1$ to $y_1=ay$ with $y=N_{12}x_1$ and an arbitrary $a\geq 0$, and $y_2$ to $y_2=-\frac{1}{p}\Omega x_2+b y$ with an arbitrary $b\in\mathbb{R}$.
   Then, we have that
   \begin{equation*}
      \mathcal{F}_N(Z')=\mathcal{F}_N(Z_0)+(2a+pb^2)yy^\top -\frac{1}{p}(\Omega x_2)(\Omega x_2)^\top.
   \end{equation*}
   Owing to arbitrariness of $a\geq 0$, for all $b$, there exists $a\geq 0$ such that $(2a+pb^2)\geq 0$,
   and then $\mathcal{F}_N(Z')\succeq \mathcal{F}_N(Z_0)\succeq 0$ holds.
   Due to \eqref{include}, for all $b$, there exists $a\geq 0$ such that $\mathcal{F}_M(Z')\succeq 0$.
   However, the range of $b$ satisfying $\mathcal{F}_M(Z')\succeq 0$ is bounded because $x_2^\top M_{22}x_2<0$, which contradicts the arbitrariness of $b$.
   Therefore, we obtain $\ker N_{22}\subseteq \ker N_{12}$.

   Finally, we show $N|N_{22}\succeq 0$.
   Since $\ker N_{22}\subseteq \ker N_{12}$, we have $\mathcal{F}_N(Z)=N|N_{22}+(Z+N_{22}^\dagger N_{21})^\top N_{22}(Z+N_{22}^\dagger N_{21})$
   by utilizing \eqref{qmi:tenkai}.
   Since $N_{22}\preceq 0$ and $\mathcal{F}_N(Z_0)\succeq 0$, we obtain $N|N_{22}\succeq 0$.
\end{proof}

Lemma~\ref{lem:Ninfo} claims that if $\Zqr(M)$ is a matrix ellipsoid and contains $\Zqr(N)$, $\Zqr(N)$ is also a matrix ellipsoid.
Next, we prove Theorem~\ref{thm:Slem_new} by leveraging Lemma~\ref{lem:Ninfo}.

\begin{proof}
   (Theorem~\ref{thm:Slem_new})
   We first prove the ``if'' statement.
   Assume that there exist scalars $\alpha\geq 0$ and $\beta>0$ such that \eqref{slem_lmi_new}.
   Then, when $Z\in\Zqr(N)$, we obtain
   \begin{equation*}
      \IqZ^\top M\IqZ\succeq \alpha\IqZ^\top N\IqZ +\beta I_q\succ 0,
   \end{equation*}
   which implies $Z\in\Zqr^+(M)$.
   Thus, $\Zqr(N)\subseteq\Zqr^+(M)$ holds.

   Second, we prove the ``only if'' statement.
   Assume that $\Zqr(N)\subseteq\Zqr^+(M)$.
   Since $\Zqr(N)$ is nonempty, there exists $Z_0\in\Zqr(N)$, which is also contained in $\Zqr^+(M)$.
   By utilizing \eqref{N_bunkai}, we have that
   \begin{equation}\label{prf_Msucc0}
      M|M_{22}\succeq \matvec{I_q}{Z_0}^\top M\matvec{I_q}{Z_0}\succ 0.
   \end{equation}
   When $M_{22}=0$, $M_{12}=0$ is obtained by $\ker M_{22}\subseteq \ker M_{12}$, and then $M_{11}=M|M_{22}\succ 0$ also holds from \eqref{prf_Msucc0}.
   Thus, $\alpha=0$ and $0<\beta<\lambda_{\mathrm{min}}$ satisfy \eqref{slem_lmi_new} where $\lambda_{\mathrm{min}}$ is the minimum eigenvalue of $M_{11}$.
   When $M_{22}\neq 0$, $M_{22}$ has at least one negative eigenvalue.
   Since the preconditions and \eqref{prf_Msucc0} lead to $M\in\mathbf{\Pi}_{q,r}$, we can apply Lemma~\ref{lem:Ninfo}, and then we have $N\in\mathbf{\Pi}_{q,r}$.
   Thus, we obtain \eqref{slem_lmi_new} by applying Proposition~\ref{prop:Slem_beta}.
\end{proof}

We obtain an LMI condition equivalent to the data informativity by applying Theorem~\ref{thm:Slem_new} for $M$ and $N$ as defined in \eqref{dfn_M_qstab} and \eqref{dfn_N} respectively.
\begin{theorem}\label{thm:qstab}
    Consider the data $(X_+,X,U)$ is generated by \eqref{eq:cleandata} and \eqref{eq:measured_data} under Assumption~\ref{ass:noiseset}.
    Then, the data $(X_+,X,U)$ is informative for quadratic stabilization if there exist $P\succ 0\inX{S}{n}$, $L\inX{R}{m\times n}$, and scalars $\alpha\geq 0$ and $\beta>0$ such that
    \begin{equation}\label{LMI_result}
    \mathcal{M}(P,L,\beta)-\alpha\sqmat{N}{0}{0}{0}\succeq 0,
    \end{equation}
    where 
    \begin{equation}\label{dfn_MPLbeta}
    \mathcal{M}(P,L,\beta) = 
        \begin{bmatrix}
        P-\beta I_n&0 &0      &0\\
        0          &-P&-L^\top&0\\
        0          &-L&0_m    &L\\
        0          &0 &L^\top &P
        \end{bmatrix},
    \end{equation}
    and $N$ is defined as \eqref{dfn_N}.
    In particular, when $\im E\supseteq\im[I_n\ 0_{n,n+m}]^\top$ or $\hPhi_{22}\prec 0$, \eqref{LMI_result} is a necessary and sufficient condition.
    Moreover, if \eqref{LMI_result} is feasible, the controller $K=LP^{-1}\inX{R}{m\times n}$ stabilizes any system in $\Sigma$.
\end{theorem}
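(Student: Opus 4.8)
The plan is to reduce informativity to the set inclusion~\eqref{inclusion_relation} and then convert it into the LMI~\eqref{LMI_result} via the new matrix S-lemma. First I would recall that, for fixed $P\succ0$ and $K$, the Lyapunov inequality~\eqref{qstab} is precisely the strict QMI $\IAB^\top M\IAB\succ0$ with $M$ as in~\eqref{dfn_M_qstab}: since $[A\ B]\matvec{I_n}{K}=A+BK$, one gets $\IAB^\top M\IAB=P-(A+BK)P(A+BK)^\top$. Hence the set of systems stabilized by $K$ with certificate $P$ equals $\{(A,B):[A\ B]^\top\in\Zab{n}{n+m}^+(M)\}$. Combining Lemma~\ref{lem:sys_set}, which identifies $\SigmaR$ with $\Zab{n}{n+m}(N)$ for $N$ in~\eqref{dfn_N}, and $\Sigma\subseteq\SigmaR$ from~\eqref{Sigma_SigmaRN}, I obtain that~\eqref{inclusion_relation} is sufficient for informativity, and also necessary whenever $\Sigma=\SigmaR$.

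The second step applies Theorem~\ref{thm:Slem_new} to~\eqref{inclusion_relation}. Its hypotheses hold: $\Zab{n}{n+m}(N)$ is nonempty because $(A^*,B^*)\in\Sigma\subseteq\SigmaR$; $M_{22}=-\matvec{I_n}{K}P\matvec{I_n}{K}^\top\preceq0$ since $P\succ0$; and $\ker M_{22}\subseteq\ker M_{12}$ holds trivially because $M_{12}=0$ by the block-diagonal form of~\eqref{dfn_M_qstab}. Theorem~\ref{thm:Slem_new} thus makes~\eqref{inclusion_relation} equivalent to the existence of $\alpha\geq0$ and $\beta>0$ with $M-\alpha N\succeq\sqmat{\beta I_n}{0}{0}{0}$, i.e.\ exactly~\eqref{slem_lmi_new}.

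The third and main step linearizes this matrix inequality, which is still nonconvex in $(P,K)$ through the block $M_{22}$. Introducing $L\coloneqq KP$ and partitioning into blocks of sizes $n,n,m$, one has $M_{22}=-\sqmat{P}{L^\top}{L}{LP^{-1}L^\top}$, so the sole nonlinearity is the trailing $-LP^{-1}L^\top$ term. Since $P\succ0$, a Schur-complement argument shows that~\eqref{slem_lmi_new} is equivalent to the enlarged inequality obtained by replacing that term with $0_m$ and appending one block row and column of size $n$ carrying $L$, $L^\top$, and $P$; this enlarged inequality is precisely $\mathcal{M}(P,L,\beta)-\alpha\sqmat{N}{0}{0}{0}\succeq0$ of~\eqref{LMI_result} and~\eqref{dfn_MPLbeta}. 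I expect the Schur-complement bookkeeping---tracking where the sub-blocks of $N$ land and verifying that the appended block reproduces exactly $-LP^{-1}L^\top$---to be the main technical obstacle, although it is routine once the partition is fixed.

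Finally I would assemble both directions. For sufficiency, given a feasible $(P,L,\alpha,\beta)$ for~\eqref{LMI_result}, undo the Schur complement to recover~\eqref{slem_lmi_new}, invoke the ``if'' part of Theorem~\ref{thm:Slem_new} to obtain~\eqref{inclusion_relation}, and conclude via $\Sigma\subseteq\SigmaR$ that every $(A,B)\in\Sigma$ satisfies~\eqref{qstab} with $P\succ0$ and $K=LP^{-1}$; the discrete-time Lyapunov inequality then certifies that $A+BK$ is Schur, so $K$ stabilizes all of $\Sigma$. For the necessary-and-sufficient claim, Theorem~\ref{thm:sys_set} gives $\Sigma=\SigmaR$ under either $\im E\supseteq\im[I_n\ 0_{n,n+m}]^\top$ or $\hPhi_{22}\prec0$, so informativity coincides with~\eqref{inclusion_relation}; running the ``only if'' part of Theorem~\ref{thm:Slem_new} together with the Schur equivalence then produces a feasible~\eqref{LMI_result}, closing the equivalence.
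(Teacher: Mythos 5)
Your proposal is correct and follows essentially the same route as the paper's proof: reduce informativity to the inclusion~\eqref{inclusion_relation} (sufficient in general, necessary when $\Sigma=\SigmaR$ via Theorem~\ref{thm:sys_set}), verify the preconditions of Theorem~\ref{thm:Slem_new} (nonemptiness of $\Zab{n}{n+m}(N)$ from the true system, $M_{22}\preceq 0$ and $\ker M_{22}\subseteq\ker M_{12}$ from $P\succ 0$), and then linearize $M-\alpha N\succeq\sqmat{\beta I_n}{0}{0}{0}$ with the substitution $L=KP$ and a Schur complement to arrive at~\eqref{LMI_result}. Your explicit bookkeeping of the Schur step (replacing $-LP^{-1}L^\top$ by $0_m$ and appending the block row/column carrying $L$, $L^\top$, $P$) is exactly the conversion the paper performs implicitly.
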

\begin{proof}
    First, we confirm that Theorem~\ref{thm:Slem_new} can be applied to $M$ and $N$, where $M$ is defined as \eqref{dfn_M_qstab}.
    The system set $\Sigma$ is nonempty because it contains the true system $(A^*,B^*)$.
    Thus, $\SigmaR$ corresponding to $\Zab{n}{n+m}(N)$ is also nonempty due to $\Sigma\subseteq\SigmaR$.
    On the other hand, the matrix $M$ satisfies $M_{22}\preceq 0$ and $\ker M_{22}\subseteq\ker M_{12}$ when $P\succ 0$.

    Recall that the data is informative for quadratic stabilization if \eqref{inclusion_relation} holds with $P\succ 0$ and $K$, and the converse also holds when $\Sigma=\SigmaR$ holds.
    From Theorem~\ref{thm:Slem_new}, it follows that \eqref{inclusion_relation} holds if and only if there exist scalars $\alpha\geq 0$ and $\beta>0$ such that
        {\small$M-\alpha N\succeq \sqmat{\beta I}{0}{0}{0}$}.
    By setting $L=KP$ and applying Schur complement, this inequality is equivalently converted into \eqref{LMI_result}.
    Therefore, the data is informative if there exist $P,L,\alpha$ and $\beta$ such that \eqref{LMI_result}.
    Moreover, when $\im E\supseteq\im[I_n\ 0_{n,n+m}]^\top$ or $\hPhi_{22}\prec 0$, from Theorem~\ref{thm:sys_set}, it follows that $\Sigma=\SigmaR$.
    Thus, in this case, the condition is also necessary.
    Note that controller is obtained as $K=LP^{-1}$ since we set $L=KP$.
\end{proof}
\emph{Remark:} In the case under exogenous disturbances or measurement noise, since $\im E\supseteq\im[I_n,0_{n,n+m}]^\top$, the LMI condition \eqref{LMI_result} provides a necessary and sufficient condition.
This LMI coincides with those in~\cite[Theorem 5.1(a)]{DDCQMI:Waarde2023_siam_qmi} and in~\cite[Theorem 1]{DDCQMI:BISOFFI2024_CSL}.

\subsection{Data Informativity for Unbounded System Set}\label{subsec:unbounded}
In our analysis, the system set $\Sigma$ may be unbounded, in contrast to the analysis under measurement noise with large SNR~\cite{DDCQMI:BISOFFI2024_CSL}.
However, it remains unclear whether data can still be informative when $\Sigma$ is unbounded.
First, we derive a necessary condition for the LMI~\eqref{LMI_result} to be feasible.
\begin{lemma}\label{lem:NinPi}
    $N\in\mathbf{\Pi}_{n,n+m}$ holds if there exist $P\succ 0,L,\alpha\geq 0$, and $\beta>0$ such that \eqref{LMI_result}.
\end{lemma}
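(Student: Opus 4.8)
The plan is to read this lemma as a direct consequence of Lemma~\ref{lem:Ninfo}. Feasibility of \eqref{LMI_result} will be shown to force the inclusion $\Zab{n}{n+m}(N)\subseteq\Zab{n}{n+m}(M)$ for the matrix ellipsoid $M$ of \eqref{dfn_M_qstab}, and Lemma~\ref{lem:Ninfo} then transfers the matrix-ellipsoid property from $M$ to $N$, which is exactly $N\in\mathbf{\Pi}_{n,n+m}$. Hence the whole task reduces to reconstructing the S-lemma inequality from \eqref{LMI_result} and checking the hypotheses of Lemma~\ref{lem:Ninfo}.

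First I would reverse the Schur-complement step used in the proof of Theorem~\ref{thm:qstab}. Since the bottom-right block of \eqref{LMI_result} equals $P\succ 0$, taking the Schur complement of \eqref{LMI_result} with respect to that block and setting $K=LP^{-1}$ returns
\begin{equation*}
   M-\alpha N\succeq\sqmat{\beta I_n}{0}{0}{0},
\end{equation*}
with $M$ as in \eqref{dfn_M_qstab}. Because $\alpha\geq 0$ and $\beta>0$, for every $Z\in\Zab{n}{n+m}(N)$ one has $\matvec{I_n}{Z}^\top M\matvec{I_n}{Z}\succeq\alpha\matvec{I_n}{Z}^\top N\matvec{I_n}{Z}+\beta I_n\succeq 0$, giving $\Zab{n}{n+m}(N)\subseteq\Zab{n}{n+m}(M)$. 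I would then verify the remaining hypotheses of Lemma~\ref{lem:Ninfo}: the set $\Zab{n}{n+m}(N)$ is nonempty because $(A^*,B^*)\in\Sigma\subseteq\SigmaR=\Zab{n}{n+m}(N)$ by Lemma~\ref{lem:sys_set}; the matrix $M$ is block diagonal with $M_{12}=0$, $M_{22}=-\matvec{I_n}{K}P\matvec{I_n}{K}^\top\preceq 0$, and $M|M_{22}=P\succ 0$, so $M\in\mathbf{\Pi}_{n,n+m}$; and since $\matvec{I_n}{K}$ has full column rank and $P\succ 0$, the block $M_{22}$ carries $n$ negative eigenvalues, whence $M$ has at least one negative eigenvalue. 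Lemma~\ref{lem:Ninfo} then yields $N\in\mathbf{\Pi}_{n,n+m}$.

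The hard part is not any computation but recognizing that the nonemptiness hypothesis is indispensable: without it the conclusion is false. If $\Zab{n}{n+m}(N)$ were empty, the inclusion $\Zab{n}{n+m}(N)\subseteq\Zab{n}{n+m}(M)$ would hold vacuously, and \eqref{LMI_result} can be satisfied even when $N\notin\mathbf{\Pi}_{n,n+m}$---already a scalar instance with $N\prec 0$ exhibits this. Consequently the proof must explicitly invoke the existence of a consistent system, $(A^*,B^*)\in\Sigma$, to supply a point of $\Zab{n}{n+m}(N)$; once that is in place the verification above is routine.
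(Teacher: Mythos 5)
Your proof is correct and takes essentially the same route as the paper's: recover $M-\alpha N\succeq\sqmat{\beta I_n}{0}{0}{0}$ from \eqref{LMI_result} via the Schur complement with $K=LP^{-1}$, deduce $\Zab{n}{n+m}(N)\subseteq\Zab{n}{n+m}(M)$, obtain nonemptiness of $\Zab{n}{n+m}(N)$ from $(A^*,B^*)\in\Sigma\subseteq\SigmaR$, and invoke Lemma~\ref{lem:Ninfo}. Your explicit verification of the remaining hypotheses of Lemma~\ref{lem:Ninfo} (that $M\in\mathbf{\Pi}_{n,n+m}$ and that $M_{22}$ carries negative eigenvalues because $[I_n\ K^\top]^\top$ has full column rank and $P\succ 0$) is a useful detail the paper leaves implicit, but it does not change the argument.
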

\begin{proof}
    When there exist $P\succ 0,L,\alpha\geq 0$, and $\beta>0$ such that \eqref{LMI_result}, $\Zab{n}{n+m}(N)\subseteq \Zab{n}{n+m}^+(M)$ where $K\coloneqq LP^{-1}$ and $M$ is defined as \eqref{dfn_M_qstab}.
    Since $\Zab{n}{n+m}^+(M)\subseteq \Zab{n}{n+m}(M)$, $\Zab{n}{n+m}(N)\subseteq \Zab{n}{n+m}(M)$ holds.
    Moreover, $\Zab{n}{n+m}(N)$ is nonempty since the true system belongs to $\SigmaR$.
    Then, by applying Lemma~\ref{lem:Ninfo} we obtain $N\in\mathbf{\Pi}_{n,n+m}$.
\end{proof}

Lemma~\ref{lem:NinPi} claims that the system set, which corresponds to $\Zab{n}{n+m}(N)$, must be a matrix ellipsoid for quadratic stabilization.
The assumption of a sufficiently large SNR in~\cite{DDCQMI:BISOFFI2024_CSL} is equivalent to $N_{22}\prec 0$, which ensures that the system set $\Sigma$ corresponds to a bounded matrix ellipsoid.
It should be emphasized that, while the necessary condition $N\in\Pi_{n,n+m}$ derived in Lemma~\ref{lem:NinPi} includes $N_{22}\preceq 0$, it does not necessarily require the boundedness of $\Sigma$.
By relaxing the strict inequality, we can handle, for example, rank-deficient data matrices as given below.
We take {\small$\hPhi\coloneqq \sqmat{\hat{\Theta}}{0}{0}{-I_T}$} for the noise set $\mathcal{D}$ under Assumption~\ref{ass:noiseset}.
Then, $\mathcal{D}$ is equivalent to the solution set of \eqref{eiv:noise_qmi2} with $\Theta = E\hat{\Theta}E^\top$ by Proposition~\ref{prop:lin_eq_qmi} in Appendix.
The system set $\Sigma$ corresponds to $\Zab{n}{n+m}(N)$ by Theorem~\ref{thm:sys_set}, where $N$ is defined as \eqref{dfn_N}.
Then, we have
${\small N_{22}=E_-\hat{\Theta}E_-^\top-\matvec{X}{U}\matvec{X}{U}^\top},$
where $E_-\inX{R}{(n+m)\times \hat{n}}$ is the lower submatrix of $E=[E_+^\top\ E_-^\top]^\top$.
When $E_-$ is not a full row rank matrix, $N_{22}\preceq 0$ does not require the full-rank property on the data $[X^\top\ U^\top]^\top$.
In Sec.~\ref{subsec:muri}, we provide a numerical example where $[X^\top\ U^\top]^\top$ does not have full row rank but~\eqref{LMI_result} is feasible.

Data can be informative even when $N_{22}\not\prec 0$.
However, when $N_{22}\not\prec 0$ and \eqref{LMI_result} is feasible, the left-hand side of \eqref{LMI_result} has eigenvalues equal to zero.
These zero eigenvalues act as implicit linear constraints, which may result in small constraint violations in numerical computation.
To resolve this issue, we provide a numerically stable alternative formulation of \eqref{LMI_result}.
\begin{corollary}\label{cor:interior}
    For $N$ in~\eqref{dfn_N},
    let $V \in \mathbb{R}^{(n+m) \times \mathrm{rank}\,N_{22}}$ be a matrix whose columns form an orthonormal basis of $\mathrm{im}\,N_{22}$.
    Let $V=[V_+^\top\ V_-^\top]^\top$, where $V_+\inX{R}{n\times\mathrm{rank}\,N_{22}},V_-\inX{R}{(n+m)\times\mathrm{rank}\,N_{22}}$.
    Define
    {\small $\bar{N}\coloneqq\sqmat{I}{0}{0}{V}^\top N\sqmat{I}{0}{0}{V}$}.
    If $N\in\mathbf{\Pi}_{n,n+m}$ and there exists $\bar{P}\succ 0\inX{S}{n}$, a matrix $\bar{Y}\inX{R}{\mathrm{rank}\,N_{22}\times n}$, and a scalar $\bar{\alpha}> 0$ such that
    \begin{subequations}\label{LMI_result_cor}
    \begin{align}
    &\begin{bmatrix}
        \bar{P} &0                        &0      \\
        0       &0_{\mathrm{rank}\,N_{22}}&\bar{Y}\\
        0       &\bar{Y}^\top             &\bar{P}
      \end{bmatrix}
      -\bar{\alpha}\begin{bmatrix}
        \bar{N}&{0}\\{0}&{0_n}
      \end{bmatrix}\succ 0, \label{LMI_result_interior}\\
      &\bar{P} = V_+\bar{Y}  \label{result_eq},
    \end{align}
    \end{subequations}
    there exists $\beta>0$ such that \eqref{LMI_result} holds with $P= \bar{P}, L= V_-\bar{Y}$, and $\alpha=\bar{\alpha}$.
    Conversely, if there exist $P\succ 0,L,\alpha\geq 0$, and $\beta>0$ such that \eqref{LMI_result}, $N\in\mathbf{\Pi}_{n,n+m}$ and there exists $\bar{\alpha}\geq0$ such that \eqref{LMI_result_cor} holds with $\bar{P}= P$ and $Y=V^\top [{P}\ {L}^\top]^\top$.
\end{corollary}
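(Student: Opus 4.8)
The plan is to undo the Schur complement in \eqref{LMI_result} and then perform a congruence onto $\im N_{22}$. First I would set $Y\coloneqq\matvec{P}{L}$ and eliminate the trailing $P$-block of \eqref{LMI_result} by a Schur complement (legitimate since $P\succ0$), which shows that \eqref{LMI_result} is equivalent to the $(2n+m)$-dimensional inequality $\sqmat{P-\beta I_n}{0}{0}{-YP^{-1}Y^\top}-\alpha\sqmat{N}{0}{0}{0}\succeq0$; this is precisely the inequality delivered by the matrix S-lemma for the stabilization objective \eqref{dfn_M_qstab}, since $\matvec{I_n}{K}P\matvec{I_n}{K}^\top=YP^{-1}Y^\top$ under $L=KP$. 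The reduced system \eqref{LMI_result_cor} is the same object restricted to $\im N_{22}$, so the statement will follow once I track how this restriction acts on the two blocks.

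The key structural step is that feasibility forces $\im Y\subseteq\im N_{22}$. Testing the reformulated inequality on vectors $\matvec{0}{v}$ with $v\in\ker N_{22}$, and using $N\in\mathbf{\Pi}_{n,n+m}$ (hence $N_{12}v=0$), collapses the quadratic form to $-v^\top YP^{-1}Y^\top v\geq0$, which forces $Y^\top v=0$. Thus $Y=V\bar Y$ with $\bar Y=V^\top Y$, i.e. $\bar P=V_+\bar Y$ and $L=V_-\bar Y$, recovering \eqref{result_eq} and the stated substitutions. In the converse I obtain $N\in\mathbf{\Pi}_{n,n+m}$ from Lemma~\ref{lem:NinPi} before running this argument, whereas in the forward direction I simply impose these relations so that $\im Y\subseteq\im N_{22}$ holds by construction. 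Completing $V$ to an orthogonal matrix $[V\ V_\perp]$ with $\im V_\perp=\ker N_{22}$ and applying the congruence $\sqmat{I_n}{0}{0}{[V\ V_\perp]}$ annihilates every block touching $V_\perp$ (because $N_{12}V_\perp=0$ and $Y^\top V_\perp=0$), reducing the inequality to $\sqmat{P-\beta I_n}{0}{0}{-\bar YP^{-1}\bar Y^\top}-\alpha\bar N\succeq0$ padded by a zero block with $\bar P=P$. Undoing the Schur complement in $\bar Y$ then reproduces \eqref{LMI_result_interior} with the corner term $\beta I_n$ subtracted.

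The delicate point, which I expect to be the main obstacle, is the \emph{strict} inequality in \eqref{LMI_result_interior}: the reduction only yields a non-strict reduced inequality carrying a margin $\beta I_n$ confined to the leading $n\times n$ block, whereas \eqref{LMI_result_interior} demands $\succ0$. Here I would exploit that, since the columns of $V$ span \emph{exactly} $\im N_{22}$ and $N_{22}\preceq0$, the reduced block $\bar N_{22}=V^\top N_{22}V$ is strictly negative definite. Replacing $\bar\alpha=\alpha$ by $\bar\alpha=\alpha+\epsilon$ subtracts $\epsilon\bar N$, whose $(2,2)$ block $-\epsilon\bar N_{22}\succ0$ removes the semidefiniteness in the lower block while its $(1,1)$ perturbation is dominated by the margin $\beta I_n$ for small $\epsilon>0$; a short Schur-complement estimate then upgrades the non-strict inequality to $\succ0$, which is also why the statement reads $\bar\alpha\geq0$ rather than $\bar\alpha=\alpha$. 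The forward implication merely reverses the chain: strictness of \eqref{LMI_result_interior} survives subtracting a small corner $\beta I_n$, after which the Schur and congruence steps are run backwards, with \eqref{result_eq} guaranteeing $Y=V\bar Y$ so that the reduction is exact. Apart from this strictness upgrade and checking that the congruence cleanly decouples the $\ker N_{22}$ directions, the remaining manipulations are routine Schur-complement bookkeeping.
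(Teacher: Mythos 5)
Your proposal is correct and follows essentially the same route as the paper's proof: undo the Schur complement in \eqref{LMI_result}, restrict the resulting $(2n+m)$-dimensional inequality to $\im N_{22}$ via a congruence with $V$, obtain \eqref{result_eq} from the $(2,2)$-block (the paper factorizes via Proposition~\ref{prop:lin_eq_qmi} where you use the equivalent kernel-vector test), invoke Lemma~\ref{lem:NinPi} for $N\in\mathbf{\Pi}_{n,n+m}$ in the converse, and exploit $\bar{N}_{22}=V^\top N_{22}V\prec 0$ by inflating $\alpha$ to $\bar{\alpha}=\alpha+\delta$ to upgrade to the strict inequality \eqref{LMI_result_interior} --- exactly the paper's $\delta_\alpha$ argument. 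Your only deviations are cosmetic (completing $V$ to an orthogonal matrix $[V\ V_\perp]$ so the restriction is a two-sided equivalence, rather than the paper's one-directional tall congruence combined with the identity $N=\mathrm{diag}(I,V)\bar{N}\,\mathrm{diag}(I,V)^\top$), plus a harmless notational slip where the reduced inequality should read $-\alpha N$ rather than $-\alpha\,\mathrm{diag}(N,0)$.
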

\begin{proof}
    We prove the first statement.
    Assume that $N\in\mathbf{\Pi}_{n,n+m}$ and there exist $\bar{P}\succ 0, \bar{Y}$, and $\bar{\alpha}> 0$ such that \eqref{LMI_result_interior} and \eqref{result_eq}.
    By applying Schur complement, we rewrite \eqref{LMI_result_interior} as
    ${\small \sqmat{\bar{P}}{0}{0}{-\bar{Y}\bar{P}^{-1}\bar{Y}^\top}-\bar{\alpha}\bar{N}\succ0}.$
    Then, there exists $\beta>0$ such that
    \begin{equation}
      \sqmat{\bar{P}}{0}{0}{-\bar{Y}\bar{P}^{-1}\bar{Y}^\top}-\bar{\alpha}\bar{N}\succeq\sqmat{\beta I_n}{0}{0}{0}.
    \end{equation}
    Applying a congruent transformation with {$\mathrm{diag}({I_n},{V^\top})$}, we obtain
    \begin{equation}
      \sqmat{\bar{P}}{0}{0}{-V\bar{Y}\bar{P}^{-1}\bar{Y}^\top V^\top}-\bar{\alpha}N\succeq\sqmat{\beta I_n}{0}{0}{0},
    \end{equation}
    using
    ${\small N=\sqmat{I}{0}{0}{V}\bar{N}\sqmat{I}{0}{0}{V^\top}}$
    because $\ker N_{22}\subseteq\ker N_{12}$ from $N\in\mathbf{\Pi}$.
    By setting $P\coloneqq \bar{P},L\coloneqq V_-\bar{Y}$, and $\alpha\coloneqq\bar{\alpha}$ and using \eqref{result_eq}, we obtain
    \begin{equation}\label{prf_schur}
      \sqmat{{P}}{0}{0}{-\matvec{P}{L}{P}^{-1}\matvec{P}{L}^\top}-{\alpha}N\succeq\sqmat{\beta I_n}{0}{0}{0}.
    \end{equation}
    Then, applying Schur complement, we obtain \eqref{LMI_result}.

    Next, we prove the second statement.
    Assume that there exist $P\succ 0,L,\alpha\geq 0$, and $\beta>0$ such that \eqref{LMI_result}.
    Then, from Lemma~\ref{lem:NinPi}, it follows that $N\in\mathbf{\Pi}_{n,n+m}$.
    Next, we derive \eqref{LMI_result_interior}.
    Applying Schur complement, we rewrite \eqref{LMI_result} as \eqref{prf_schur}.
    By a congruent transformation with {$\mathrm{diag}({I_n},{V})$}, we obtain
    \begin{equation}\label{prf_schur_y}
      \sqmat{\bar{P}}{0}{0}{-\bar{Y}\bar{P}^{-1}\bar{Y}^\top}-{\alpha}\bar{N}\succeq\sqmat{\beta I_n}{0}{0}{0},
    \end{equation}
    where we define $\bar{Y}\coloneqq V^\top[P\ L^\top]^\top$ and $\bar{P}\coloneqq P$.
    Decomposing
    ${\small \bar{N}=\sqmat{\bar{N}_{11}}{\bar{N}_{12}}{\bar{N}_{21}}{\bar{N}_{22}}},$
    with $\bar{N}_{11}\inX{S}{n},$ and $\bar{N}_{22}\inX{S}{\mathrm{rank}\,N_{22}},$
    we note that $\bar{N}_{22}$ is negative definite from the definition of $V$.
    Thus, there exists $\delta_\alpha>0$ such that $\beta I-\delta_\alpha\bar{N}|\bar{N}_{22}\succ 0$ and $-\delta_\alpha \bar{N}_{22}\succ 0$.
    By Schur complement, we obtain
    \begin{equation}\label{prf_deltaalpha}
        \sqmat{\beta I}{0}{0}{0}-\delta_\alpha N\succ 0.
    \end{equation}
    From \eqref{prf_schur_y} and \eqref{prf_deltaalpha}, it follows that
    \begin{align}
      \sqmat{\bar{P}}{0}{0}{-\bar{Y}\bar{P}^{-1}\bar{Y}^\top}-({\alpha}+\delta_\alpha)\bar{N}
      \succ
      0.
    \end{align}
    Setting $\bar{\alpha}\coloneqq\alpha+\delta_\alpha$ and applying Schur complement, we obtain \eqref{LMI_result_interior}.
    Finally, we derive \eqref{result_eq}.
    From the $(2,2)$ block of \eqref{prf_schur}, we obtain
    ${\small \alpha(-N_{22})-\matvec{P}{L}P^{-1}\matvec{P}{L}^\top\succeq 0}.$
    Then, from Proposition~\ref{prop:lin_eq_qmi} in the Appendix, there exists $M\inX{R}{(n+m)\times n}$ such that {$[P\ L^\top]^\top P^{-1/2}=\alpha^{1/2}(-N_{22})^{1/2}M$}.
    This equation implies {$\im[P\ L^\top]^\top\subseteq\im V$} because $P$ is nonsingular and $\im N_{22}=\im V$.
    Then, we obtain {$V\bar{Y}=VV^\top[P\ L^\top]^\top=[P\ L^\top]^\top$}, which includes \eqref{result_eq}.
\end{proof}
Note that $N\in\mathbf{\Pi}_{n,n+m}$ can be easily verified since $N$ is defined by the known matrices $E$, $\hPhi$, and the data $(X_+,X,U)$.
Corollary~\ref{cor:interior} provides a numerically stable alternative to Theorem~\ref{thm:qstab} because \eqref{LMI_result_cor} consists of linear equations and inequalities whose solution set contains interior points.
Moreover, since a solution of \eqref{LMI_result} can be obtained from the solution of \eqref{LMI_result_cor}, Corollary~\ref{cor:interior} can be used for controller synthesis.


\section{Data Informativity for Optimal Control under Data Perturbation}\label{sec:optimal}
In this section, we address the data informativity problem for guaranteeing the $\Hc{2}$ and $\Hc{\infty}$ performance by leveraging the findings in the previous section.

\subsection{$\Hc{2}$ control}
We consider a linear system
\begin{equation}\label{opt:open}
x_+=A^*x+B^*u+w,\quad z=Cx+Du
\end{equation}
with the exogenous input $w$ and the performance output $z$.
We assume that $(A^*,B^*)$ are unknown but $(C,D)$ are known as in~\cite{DDCQMI:Waarde2022_TAC_origin,DDCQMI:Steentjes2022_Cont_Sys_Let_Covariance,DDCQMI:Hu2022_CDC}.
We further assume that, as in the previous section, the $T$-long offline batch data $(X_+,X,U)$ under data perturbation is available.
We denote the set of system matrices $(A,B)$ consistent with data by $\Sigma$.
Let $G(z)\coloneqq (C+DK)(zI-(A+BK))^{-1}$ be the transfer function of the closed-loop system for $(A,B)$ from $w$ to $z$ with the state-feedback control $u=Kx$.
Our objective is to render the $\Hc{2}$ and $\Hc{\infty}$ norm of $G(z)$ for any $(A,B)\in\Sigma$ less than a prescribed constant $\gamma>0$.
The $\Hc{2}$ norm of $G(z)$ is less than $\gamma$ if and only if there exists a matrix $P\succ 0$ satisfying the following conditions~\cite{DDCQMI:Waarde2022_TAC_origin}:
\begin{subequations}
  \begin{align}
   &   P\succ A_K^\top P A_K+C_K^\top C_K\label{opt:h2_1},\\
   &  \tr P<\gamma^2\label{opt:h2_2},
  \end{align}
\end{subequations}
where $A_K\coloneqq A+BK$ and $C_K\coloneqq C+DK$.
Based on this, we define data informativity for $\mathcal{H}_{2}$ control.
\begin{definition}\label{dfn:datainfo_h2}
  Data $(X_+,X,U)$ is said to be informative for $\mathcal{H}_{2}$ control with performance $\gamma>0$ under data perturbation when there exists a pair of a positive definite matrix $P\succ 0\inX{S}{n}$ and a controller $K\inX{R}{m\times n}$ such that \eqref{opt:h2_1} and \eqref{opt:h2_2} hold for any $(A,B)\in\Sigma$.
\end{definition}
From~\cite{DDCQMI:Waarde2022_TAC_origin}, matrices $P\succ 0$ and $K$ satisfy \eqref{opt:h2_1} and \eqref{opt:h2_2} if and only if $Y\coloneqq P^{-1}$ and $L\coloneqq KP^{-1}$ satisfy
\begin{subequations}\label{opt:h2}
  \begin{align}
    &\IAB^\top M_\Hc{2}\IAB\succ 0,     \label{opt:h2_qmi}\\
    &Y-{C_{Y,L}}^\top C_{Y,L}\succ 0,   \label{opt:h2_3}\\
    &\tr Y^{-1}<\gamma^2,               \label{opt:h2_4}
  \end{align}
\end{subequations}
where $\Cyl = CY+DL$ and 
\begin{equation}\label{opt:dfn_M_h2}
  M_\Hc{2}\coloneqq\sqmat{Y}{0}{0}{-\matvec{Y}{L}(Y-\Cyl^\top\Cyl)^{-1}\matvec{Y}{L}^\top}.
\end{equation}
Note that $M_\Hc{2}\in\mathbf{\Pi}_{n,n+m}$.
From Lemma~\ref{lem:sys_set}, $\SigmaR(\supseteq\Sigma)$ is characterized by the QMI with $N$.
Therefore, the data $(X_+,X,U)$ is informative if there exist $Y\succ 0$ and $L$ such that \eqref{opt:h2_3}, \eqref{opt:h2_4}, and 
\begin{equation}\label{opt:inclusion_relation_h2}
    \Zab{n}{n+m}(N)\subseteq\Zab{n}{n+m}^+(M_{\Hc{2}}).
\end{equation}
Note that this condition is also necessary when $\Sigma=\SigmaR$.
Applying Theorem~\ref{thm:Slem_new} to \eqref{opt:inclusion_relation_h2}, we derive LMI conditions equivalent to data informativity for $\mathcal{H}_2$ control.
\begin{theorem}\label{thm:h2}
    Consider the data $(X_+,X,U)$ generated by \eqref{eq:cleandata} and \eqref{eq:measured_data} under Assumption~\ref{ass:noiseset}.
    Then, the data $(X_+,X,U)$ is informative for $\mathcal{H}_{2}$ control with performance $\gamma>0$ if there exist $n$-dimensional positive definite matrices $Y\succ 0$ and $Z\succ 0$, a matrix $L\inX{R}{m\times n}$, and scalars $\alpha\geq 0$ and $\beta>0$ such that
    \begin{subequations}\label{LMI_H2}
    \begin{align}
      &\begin{bmatrix}
        Y-\beta I_n&0&0     &0   &0        \\
        0          &0&0     &Y   &0        \\
        0          &0&0     &L   &0        \\
        0          &Y&L^\top&Y   &\Cyl^\top\\
        0          &0&0     &\Cyl&I_p
      \end{bmatrix}
      -\alpha
      \begin{bmatrix}
        N&{0}\\{0}&{0_{n+p}}
      \end{bmatrix}\succeq 0,\label{h2:lmi1}\\
      &\sqmat{Y}{\Cyl^\top}{\Cyl}{I_p}\succ 0,\label{h2:lmi2}\\
      &\sqmat{Z}{I_n}{I_n}{Y}\succeq 0,\tr Z<\gamma^2,\label{h2:lmi3}
    \end{align}
    \end{subequations}
    where $N$ is defined as \eqref{dfn_N}.
    In particular, when $\im E\supseteq\im[I_n\ 0_{n,n+m}]^\top$ or $\hPhi_{22}\prec 0$, \eqref{LMI_H2} is a necessary and sufficient condition.
    Moreover, if \eqref{LMI_H2} is feasible, the controller $K=LY^{-1}\inX{R}{m\times n}$ ensures that $\Hc{2}$ norm of $G(z)$ is less than $\gamma$ for any system in $\Sigma$.
\end{theorem}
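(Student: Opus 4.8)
The plan is to mirror the route used for Theorem~\ref{thm:qstab}, replacing the stabilization objective matrix by the $\Hc{2}$ objective matrix $M_{\Hc{2}}$ and then linearizing. The discussion preceding the statement already reduces informativity for $\Hc{2}$ control to the existence of $Y\succ 0$ and $L$ satisfying \eqref{opt:h2_3}, \eqref{opt:h2_4}, together with the set inclusion \eqref{opt:inclusion_relation_h2}, $\Zab{n}{n+m}(N)\subseteq\Zab{n}{n+m}^+(M_{\Hc{2}})$; this inclusion is sufficient in general and becomes necessary exactly when $\Sigma=\SigmaR$. Hence the real work is to (i) convert \eqref{opt:inclusion_relation_h2} into an LMI by the new matrix S-lemma, and (ii) remove the nonlinear term $(Y-\Cyl^\top\Cyl)^{-1}$ appearing in $M_{\Hc{2}}$ via Schur complements so as to reach \eqref{LMI_H2}.

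First I would verify that Theorem~\ref{thm:Slem_new} applies to the pair $(M_{\Hc{2}},N)$. Nonemptiness of $\Zab{n}{n+m}(N)=\SigmaR$ follows since $(A^*,B^*)\in\Sigma\subseteq\SigmaR$. The hypothesis $(M_{\Hc{2}})_{22}\preceq 0$ holds precisely when $Y-\Cyl^\top\Cyl\succ 0$, i.e. under \eqref{opt:h2_3}; and because $(M_{\Hc{2}})_{12}=0$, the kernel condition $\ker (M_{\Hc{2}})_{22}\subseteq\ker (M_{\Hc{2}})_{12}$ is automatic. Theorem~\ref{thm:Slem_new} then yields that \eqref{opt:inclusion_relation_h2} is equivalent to the existence of $\alpha\geq 0$ and $\beta>0$ with $M_{\Hc{2}}-\alpha N\succeq\sqmat{\beta I_n}{0}{0}{0}$. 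The essential point is that this invokes the new S-lemma rather than Proposition~\ref{prop:Slem_beta}: under data perturbation $N$ need not lie in $\mathbf{\Pi}_{n,n+m}$, whereas $M_{\Hc{2}}\in\mathbf{\Pi}_{n,n+m}$ supplies the structure needed on the objective side.

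Next I would linearize the three resulting pieces. Condition \eqref{opt:h2_3} is equivalent to \eqref{h2:lmi2} by a Schur complement on the $I_p$ block, and \eqref{opt:h2_4} is equivalent to the existence of $Z$ with \eqref{h2:lmi3} (take $Z=Y^{-1}$ for necessity; conversely $Z\succeq Y^{-1}$ and $\tr Z<\gamma^2$ give $\tr Y^{-1}<\gamma^2$). For the main inequality I would show that \eqref{h2:lmi1}, under $\sqmat{Y}{\Cyl^\top}{\Cyl}{I_p}\succ 0$, is equivalent to $M_{\Hc{2}}-\alpha N\succeq\sqmat{\beta I_n}{0}{0}{0}$ by taking the Schur complement of \eqref{h2:lmi1} with respect to the bottom-right block $\sqmat{Y}{\Cyl^\top}{\Cyl}{I_p}$. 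The only nonzero coupling into that block is through the column $[0\ Y\ L]^\top$, and the relevant top-left block of the inverse is $(Y-\Cyl^\top\Cyl)^{-1}$, so the elimination reproduces exactly the term $\matvec{Y}{L}(Y-\Cyl^\top\Cyl)^{-1}\matvec{Y}{L}^\top$ in the $(2,2)$ block of $M_{\Hc{2}}$, leaves $Y-\beta I_n$ in the $(1,1)$ block, and keeps the $-\alpha N$ padding untouched. Combining the three linearized pieces delivers \eqref{LMI_H2}.

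Finally, for the necessary-and-sufficient claim I would invoke Theorem~\ref{thm:sys_set}: when $\im E\supseteq\im[I_n\ 0_{n,n+m}]^\top$ or $\hPhi_{22}\prec 0$ we have $\Sigma=\SigmaR$, so \eqref{opt:inclusion_relation_h2} is necessary as well as sufficient and every equivalence above is two-sided. The controller is recovered from $L=KP^{-1}=KY$, giving $K=LY^{-1}$. The step I expect to require the most care is the Schur-complement bookkeeping in the equivalence between \eqref{h2:lmi1} and the S-lemma inequality: one must confirm that blocks $2$ and $3$ couple into block $4$ through exactly $\matvec{Y}{L}$, that block $1$ and the $-\alpha N$ padding are unaffected by the elimination, and that the $(1,1)$ entry of the inverse of $\sqmat{Y}{\Cyl^\top}{\Cyl}{I_p}$ is indeed $(Y-\Cyl^\top\Cyl)^{-1}$, so that the nonlinear term is regenerated with the correct sign and dimensions.
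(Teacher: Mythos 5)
Your proposal is correct and follows essentially the same route as the paper's proof: reduce informativity to \eqref{opt:h2_3}, \eqref{opt:h2_4}, and the inclusion \eqref{opt:inclusion_relation_h2}, apply Theorem~\ref{thm:Slem_new} (noting $M_{\Hc{2}}\in\mathbf{\Pi}_{n,n+m}$ while $N$ need not be), convert each piece by Schur complements, and invoke Theorem~\ref{thm:sys_set} for necessity when $\Sigma=\SigmaR$. Your single block-elimination of $\sqmat{Y}{\Cyl^\top}{\Cyl}{I_p}$ is just the paper's ``Schur complement twice'' performed in one step, and your explicit verification of the S-lemma preconditions is a welcome (if implicit in the paper) addition rather than a deviation.
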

\begin{proof}
    We prove that $Y\succ 0$ and $L$ satisfy \eqref{opt:h2_3}, \eqref{opt:h2_4}, and \eqref{opt:inclusion_relation_h2} if and only if they satisfy \eqref{LMI_H2} with some $Z\succ 0, \alpha\geq 0$ and $\beta>0$.
    First, since $Y\succ 0$, \eqref{opt:h2_3} is equivalently converted into \eqref{h2:lmi2} by applying Schur complement.
    Second, we show that \eqref{opt:h2_4} and \eqref{h2:lmi3} are equivalent.
    If $Y$ and $L$ satisfy \eqref{opt:h2_4}, there exists $Z\succ 0$ such that $Z\succeq Y^{-1}$ and $\tr Z<\gamma^2$.
    Applying Schur complement, we obtain \eqref{h2:lmi3}.
    Conversely, if $Y,Z$ and $L$ satisfy \eqref{h2:lmi3}, it follows that $\tr Y\leq\tr Z<\gamma^2$
    Finally, we prove that \eqref{opt:inclusion_relation_h2} and \eqref{h2:lmi1} are equivalent.
    Applying Theorem~\ref{thm:Slem_new}, we can say that $Y$ and $L$ satisfy $\eqref{opt:inclusion_relation_h2}$ if and only if there exist $\alpha\geq 0$ and $\beta>0$ such that 
        {\small$M_\Hc{2}-\alpha N\succeq \sqmat{\beta I}{0}{0}{0}$}.
    Applying Schur complement twice, this inequality is equivalently converted into \eqref{h2:lmi1}.
    Therefore, if there exist $Y,Z,L,\alpha$ and $\beta$ such that \eqref{LMI_H2}, $Y$ and $L$ satisfy \eqref{opt:h2_3}, \eqref{opt:h2_4}, and \eqref{opt:inclusion_relation_h2}, and then, the data is informative.
    Moreover, when $\im E\supseteq\im[I_n\ 0_{n,n+m}]^\top$ or $\hPhi_{22}\prec 0$, $\Sigma=\SigmaR$ follows from Theorem~\ref{thm:sys_set}, and then, the converse statement also holds.
    The controller is obtained as $K=LY^{-1}$, since $Y=P^{-1}$ and $L=KP^{-1}$.
\end{proof}
Using Theorem~\ref{thm:h2}, we can formulate the $\mathcal{H}_2$ optimal control problem with the objective function $J \coloneqq \gamma^2$:
\begin{equation}\label{opt:opt_h2}
  \begin{split}
    \underset{J,Y,Z,L,\alpha,\beta}{\mathrm{minimize}}\ J\ \st
    &Y\succ 0,Z\succ 0,\alpha\geq 0,\beta>0,\\
    &\eqref{h2:lmi1},\eqref{h2:lmi2},\tr Z<J.
  \end{split}
\end{equation}

\subsection{$\Hc{\infty}$ control}
The $\Hc{\infty}$ norm of the transfer function $G(z)$ is less than $\gamma>0$ if and only if there exists a matrix $P\succ 0\inX{S}{n}$ satisfying the following condition~\cite[Sec. 4.6]{Cntr:scherer2000linear}.
\begin{equation}\label{opt:hinf}
  \begin{bmatrix}
    P   &0         &A_K^\top P&C_K^\top\\
    0   &\gamma I_n&P         &0\\
    PA_K&P         &P         &0\\
    C_K &0         &0         &\gamma I_p
  \end{bmatrix}\succ 0.
\end{equation}
Similar to the $\Hc{2}$ control case, we define data informativity for $\Hc{\infty}$ control as follows:
\begin{definition}\label{dfn:datainfo_hinf}
  Data $(X_+,X,U)$ is said to be informative for $\mathcal{H}_{\infty}$ control with performance $\gamma>0$ under data perturbation when there exists a pair of a positive definite matrix $P\succ 0\inX{S}{n}$ and a controller $K\inX{R}{m\times n}$ such that \eqref{opt:hinf} holds for any $(A,B)\in\Sigma$.
\end{definition}

We characterize the condition \eqref{opt:hinf} by a QMI involving system matrices and an LMI.
We remark that similar characterization is conducted in~\cite{DDCQMI:Steentjes2022_Cont_Sys_Let_Covariance}.
\begin{lemma}
    Matrices $P\succ 0$ and $K$ satisfy \eqref{opt:hinf} if and only if $Y\coloneqq \gamma P^{-1}$ and $L\coloneqq KY$ satisfy
    \begin{subequations}\label{opt:hinflem}
    \begin{align}
      &\IAB^\top M_\Hc{\infty}\IAB\succ 0,\label{opt:hinf_qmi}\\
      &\sqmat{Y}{\Cyl^\top}{\Cyl}{\gamma^2 I_p}\succ 0,\label{opt:hinf_yl}
    \end{align}
    \end{subequations}
    and
    \begin{equation}\label{opt:dfn_M_hinf}
        M_\Hc{\infty}\coloneqq \sqmat{Y-I_n}{0}{0}{-\matvec{Y}{L}(Y-\frac{1}{\gamma^2}\Cyl^\top\Cyl)^{-1}\matvec{Y}{L}^\top}.
    \end{equation}
\end{lemma}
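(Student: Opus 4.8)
The plan is to reduce the block LMI \eqref{opt:hinf}, written in the variables $(P,K)$, to the pair \eqref{opt:hinf_qmi}--\eqref{opt:hinf_yl} in the variables $(Y,L)$ through a sequence of congruence transformations followed by two Schur complements, each of which preserves definiteness and therefore yields an equivalence. First I would record the bookkeeping identities implied by $Y=\gamma P^{-1}$ and $L=KY$: since $A_K=A+BK$ and $C_K=C+DK$, one has $A_KY=AY+BL=[A\ B]\matvec{Y}{L}$ and $C_KY=CY+DL=\Cyl$, so that $P=\gamma Y^{-1}$, $P^{-1}A_K^\top=\tfrac1\gamma(AY+BL)^\top$, and $P^{-1}C_K^\top=\tfrac1\gamma\Cyl^\top$. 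I would also expand the quadratic form in \eqref{opt:hinf_qmi}: because $M_\Hc{\infty}$ is block diagonal, $\IAB^\top M_\Hc{\infty}\IAB=(Y-I_n)-[A\ B]\matvec{Y}{L}(Y-\tfrac1{\gamma^2}\Cyl^\top\Cyl)^{-1}\matvec{Y}{L}^\top[A\ B]^\top$, and using $[A\ B]\matvec{Y}{L}=AY+BL$ this equals $(Y-I_n)-(AY+BL)(Y-\tfrac1{\gamma^2}\Cyl^\top\Cyl)^{-1}(AY+BL)^\top$. This is the expression the reduction must produce.

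Next I would apply to \eqref{opt:hinf} the congruence with $\mathrm{diag}(P^{-1},I_n,P^{-1},I_p)$, which turns the $(1,1)$ and $(3,3)$ blocks into $P^{-1}$, the $(2,3)$ block into $I_n$, the $(1,3)$ block into $P^{-1}A_K^\top$, and the $(1,4)$ block into $P^{-1}C_K^\top$, while leaving the $(2,2)$ and $(4,4)$ blocks $\gamma I_n$ and $\gamma I_p$ unchanged. Substituting the identities above and then applying the diagonal scaling congruence $\mathrm{diag}(\sqrt\gamma I_n,\tfrac1{\sqrt\gamma}I_n,\sqrt\gamma I_n,\sqrt\gamma I_p)$ normalizes every entry, producing the equivalent condition
\begin{equation*}
\begin{bmatrix}
Y & 0 & (AY+BL)^\top & \Cyl^\top\\
0 & I_n & I_n & 0\\
AY+BL & I_n & Y & 0\\
\Cyl & 0 & 0 & \gamma^2 I_p
\end{bmatrix}\succ 0.
\end{equation*}

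Finally, after permuting the blocks into the order $(1,4,3,2)$, the leading principal $2\times2$ block is exactly $\sqmat{Y}{\Cyl^\top}{\Cyl}{\gamma^2 I_p}$, so positivity of the whole matrix forces \eqref{opt:hinf_yl}; conversely \eqref{opt:hinf_yl} makes this block a valid Schur pivot. Taking the Schur complement with respect to it replaces the trailing $2\times2$ block by $\sqmat{\Phi}{I_n}{I_n}{I_n}$ with $\Phi=Y-(AY+BL)(Y-\tfrac1{\gamma^2}\Cyl^\top\Cyl)^{-1}(AY+BL)^\top$, where the $(1,1)$ block of the pivot's inverse evaluates to $(Y-\tfrac1{\gamma^2}\Cyl^\top\Cyl)^{-1}$ by the block-inverse formula. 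A second Schur complement with respect to the trailing $I_n$ yields $\Phi-I_n\succ0$, which by the expansion above is precisely \eqref{opt:hinf_qmi}. The main obstacle I anticipate is discharging the well-definedness of each Schur step in both directions: one must verify that \eqref{opt:hinf_yl} is equivalent to $Y-\tfrac1{\gamma^2}\Cyl^\top\Cyl\succ0$ (so the inverse appearing in $M_\Hc{\infty}$ exists) and that every pivot remains positive definite along the chain, since only then are the Schur-complement reductions genuine equivalences rather than one-directional implications.
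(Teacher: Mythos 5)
Your proof is correct and follows essentially the same route as the paper: a congruence transformation converting the $(P,K)$-LMI into the $(Y,L)$ variables, followed by Schur complement reductions that split the resulting block matrix into \eqref{opt:hinf_yl} and the expanded form of \eqref{opt:hinf_qmi}. The only cosmetic differences are your scaling choice, which produces $\gamma^2 I_p$ and $\Cyl$ in the fourth block directly rather than the paper's $I_p$ and $\frac{1}{\gamma}\Cyl$, and your use of a block permutation together with the block-inverse formula in place of the paper's three sequential Schur complements.
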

\begin{proof}
    Assume that there exist $P$ and $K$ such that \eqref{opt:hinf}.
    Applying a congruent transformation with $\mathrm{diag}(Y,I_n,Y,I_p)$ and dividing the entire matrix by $\gamma$, we obtain
    \begin{equation}\label{opt:hinf1}
    \begin{bmatrix}
      Y                   &0  &\Ayl^\top&\frac{1}{\gamma}\Cyl^\top\\
      0                   &I_n&I_n      &0\\
      \Ayl                &I_n&Y        &0\\
      \frac{1}{\gamma}\Cyl&0  &0        &I_p
    \end{bmatrix}\succ 0
    \end{equation}
    where $\Ayl\coloneqq AY+BL$.
    By applying Schur complement three times, \eqref{opt:hinf1} is equivalent to the following equations:
    \begin{subequations}
    \begin{align}
      &Y-\Ayl(Y-\frac{1}{\gamma^2}\Cyl^\top\Cyl)^{-1}\Ayl^\top-I_n\succ 0,\label{opt:prf_hinf_qmi}\\
      &Y-\frac{1}{\gamma^2}\Cyl^\top\Cyl\succ 0.\label{opt:prf_hinf_yl}
    \end{align}
    \end{subequations}
    \eqref{opt:prf_hinf_qmi} is equivalent to \eqref{opt:hinf_qmi}.
    \eqref{opt:prf_hinf_yl} can be transformed into \eqref{opt:hinf_yl} by Schur complement.  
\end{proof}

Note that $M_\Hc{\infty}\in\mathbf{\Pi}_{n,n+m}$ holds.
Similar to $\Hc{2}$ control, the data is informative if there exist matrices $Y\succ 0$ and $L$ such that \eqref{opt:prf_hinf_yl} and
\begin{align}\label{opt:inclusion_hinf}
    \Zab{n}{n+m}\subseteq\Zab{n}{n+m}^+(M_\Hc{\infty}).
\end{align}
Also, the converse holds when $\Sigma=\SigmaR$.
We can obtain the LMI conditions by applying Theorem~\ref{thm:Slem_new}.
\begin{theorem}\label{thm:hinf}
    Consider the data $(X_+,X,U)$ is generated by \eqref{eq:cleandata} and \eqref{eq:measured_data} under Assumption~\ref{ass:noiseset}.
    Then, the data $(X_+,X,U)$ is informative for $\mathcal{H}_{\infty}$ control with performance $\gamma>0$ if there exist a $n$-dimensional positive definite matrix $Y\succ 0$, a matrix $L\inX{R}{m\times n}$, and scalars $\alpha\geq 0$ and $\beta>0$ such that
    \begin{subequations}\label{LMI_Hinf}
    \begin{align}
      &{
      \begin{bmatrix}
        Y\!-\!I_n\!-\!\beta I_n&0&0     &0   &0        \\
        0              &0&0     &Y   &0        \\
        0              &0&0     &L   &0        \\
        0              &Y&L^\top&Y   &\Cyl^\top\\
        0              &0&0     &\Cyl&\gamma^2 I_p
      \end{bmatrix}\!
      -\alpha
      \begin{bmatrix}
        N&{0}\\{0}&{0}
      \end{bmatrix}\succeq 0,\label{hinf:lmi1}}\\
      &\sqmat{Y}{\Cyl^\top}{\Cyl}{\gamma^2 I_p}\succ 0.\label{hinf:lmi2}
    \end{align}
    \end{subequations}
    where $N$ is defined as \eqref{dfn_N}.
    In particular, when $\im E\supseteq\im[I_n\ 0_{n,n+m}]^\top$ or $\hPhi_{22}\prec 0$, \eqref{LMI_Hinf} is a necessary and sufficient condition.
    Moreover, if \eqref{LMI_Hinf} is feasible, the controller $K=LY^{-1}\inX{R}{m\times n}$ ensures that $\Hc{\infty}$ norm of $G(z)$ is less than $\gamma$ for any system in $\Sigma$.
\end{theorem}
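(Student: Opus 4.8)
The plan is to follow the same route as the proof of Theorem~\ref{thm:h2}, exploiting the lemma immediately preceding this theorem. That lemma already reduces the $\Hc{\infty}$ synthesis condition \eqref{opt:hinf} to the equivalent pair \eqref{opt:hinf_qmi} and \eqref{opt:hinf_yl} under the change of variables $Y=\gamma P^{-1}$, $L=KY$, with $M_\Hc{\infty}$ given by \eqref{opt:dfn_M_hinf}. Hence I would reduce the claim to showing that, for fixed $Y\succ 0$ and $L$, the system-set inclusion \eqref{opt:inclusion_hinf} together with \eqref{opt:hinf_yl} is equivalent to feasibility of \eqref{hinf:lmi1}--\eqref{hinf:lmi2} in $\alpha\geq 0$, $\beta>0$. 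Data informativity then follows because $\SigmaR\supseteq\Sigma$ is characterized by the QMI with $N$ via Lemma~\ref{lem:sys_set}, so that feasibility yields \eqref{opt:hinf} for every $(A,B)\in\SigmaR\supseteq\Sigma$; the converse direction holds precisely when $\Sigma=\SigmaR$.

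The first substantive step is to invoke Theorem~\ref{thm:Slem_new} on $M_\Hc{\infty}$ and $N$. I would verify its hypotheses exactly as in the quadratic-stabilization and $\Hc{2}$ cases: $\Zab{n}{n+m}(N)$ is nonempty because $(A^*,B^*)\in\Sigma\subseteq\SigmaR$ corresponds to a point of $\Zab{n}{n+m}(N)$ through Lemma~\ref{lem:sys_set}, while $M_\Hc{\infty}\in\mathbf{\Pi}_{n,n+m}$ supplies both $M_{22}\preceq 0$ and $\ker M_{22}\subseteq\ker M_{12}$. Theorem~\ref{thm:Slem_new} then renders \eqref{opt:inclusion_hinf} equivalent to the existence of $\alpha\geq 0$, $\beta>0$ with $M_\Hc{\infty}-\alpha N\succeq\sqmat{\beta I_n}{0}{0}{0}$.

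The core computation is to transform this last matrix inequality into the linearized block form \eqref{hinf:lmi1} by applying the Schur complement twice, in full analogy with the $\Hc{2}$ derivation but with the $\Hc{\infty}$-specific entries. Writing $W\coloneqq Y-\frac{1}{\gamma^2}\Cyl^\top\Cyl$, the lower-right block of $M_\Hc{\infty}$ is $-\matvec{Y}{L}W^{-1}\matvec{Y}{L}^\top$; the first Schur complement (valid since \eqref{opt:hinf_yl}, equivalently \eqref{opt:prf_hinf_yl}, forces $W\succ 0$) pulls out this inverse and introduces a new block of size $n$ carrying $\matvec{Y}{L}$ and $W$. The second Schur complement linearizes the remaining quadratic term $\Cyl^\top\Cyl$ by expanding $W$ through $\sqmat{Y}{\Cyl^\top}{\Cyl}{\gamma^2 I_p}$, whose Schur complement with respect to $\gamma^2 I_p$ returns exactly $W$ and contributes the block of size $p$. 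Tracking the block sizes $n,n,m,n,p$, one checks that the $(1,1)$ block becomes $Y-I_n-\beta I_n$ and that $\alpha\sqmat{N}{0}{0}{0}$ occupies the leading $(2n+m)$ portion, reproducing \eqref{hinf:lmi1}; the remaining condition \eqref{opt:hinf_yl} is already in the form \eqref{hinf:lmi2}.

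Finally, for the necessity clause I would invoke Theorem~\ref{thm:sys_set}: when $\im E\supseteq\im[I_n\ 0_{n,n+m}]^\top$ or $\hPhi_{22}\prec 0$ we have $\Sigma=\SigmaR$, so the QMI characterization of the consistent-system set is exact and \eqref{opt:inclusion_hinf} becomes not only sufficient but necessary for informativity. The controller is recovered from $L=KY$ as $K=LY^{-1}$. I expect the main obstacle to be purely bookkeeping in the double Schur complement: one must track the block partition and the signs carefully, and in particular ensure that each Schur-complement equivalence is applied only with a strictly positive-definite pivot, which is why \eqref{hinf:lmi2} must be carried alongside \eqref{hinf:lmi1} rather than treated as an afterthought.
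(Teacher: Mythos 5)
Your proposal is correct and follows essentially the same route as the paper's proof: reduce via the preceding lemma to the pair \eqref{opt:prf_hinf_yl} and \eqref{opt:inclusion_hinf}, apply Theorem~\ref{thm:Slem_new} to turn the inclusion into $M_{\mathcal{H}_\infty}-\alpha N\succeq\begin{bmatrix}\beta I&0\\0&0\end{bmatrix}$, convert that to \eqref{hinf:lmi1} by two Schur complements (with pivots $\gamma^2 I_p$ and $W=Y-\frac{1}{\gamma^2}C_{Y,L}^\top C_{Y,L}\succ 0$), identify \eqref{opt:prf_hinf_yl} with \eqref{hinf:lmi2}, and invoke Theorem~\ref{thm:sys_set} for the necessity clause. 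Your write-up is in fact more explicit than the paper's (which defers to the Theorem~\ref{thm:h2} proof and states the Schur-complement step without detail), and your emphasis that \eqref{hinf:lmi2} supplies the positive-definite pivot needed for the Schur-complement equivalence is exactly the right point of care.
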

\begin{proof}
    The proof is similar to that of Theorem~\ref{thm:h2}.
    We only prove that $Y\succ 0$ and $L$ satisfy \eqref{opt:prf_hinf_yl} and \eqref{opt:inclusion_hinf} if and only if they satisfy \eqref{LMI_Hinf} with some $\alpha\geq 0$ and $\beta>0$.
    First, \eqref{opt:prf_hinf_yl} is same as \eqref{hinf:lmi2}.
    Next, by applying Theorem~\ref{thm:Slem_new}, $Y\succ 0$ and $L$ satisfy \eqref{opt:inclusion_hinf} if and only if there exist $\alpha\geq 0$ and $\beta>0$ such that {\small$M_\Hc{\infty}-\alpha N\succeq \sqmat{\beta I}{0}{0}{0}$}.
    This inequality can be equivalently converted into \eqref{hinf:lmi1} by applying Schur complement twice.
\end{proof}

\section{Data Informativity with Output Feedback Control under Data Perturbation}\label{sec:inout}
In this section, we extend the data informativity problem to the case of output feedback controllers.
Our analysis extends previous results~\cite{DDCQMI:Steentjes2022_Cont_Sys_Let_Covariance,DDCQMI:Waarde2024_TAC_AR,DDCQMI:Lidong2024}
for data perturbation
by eliminating the restrictive full-rank data assumption.

\subsection{Problem Formulation}
Consider an autoregressive (AR) model
\begin{equation}\label{AR:nominal}
 \textstyle{
  y(t)=\sum_{l=1}^{L}A_{l}^*y(t-l)+\sum_{l=0}^{L}B_{l}^*u(t-l)
  }
\end{equation}
with output $y(t)\inX{R}{p}$ and input $u(t)\inX{R}{m}$.
Assume that the system matrices $A_l^*\inX{R}{p\times p},l = 1,\dots,L$, $B_l^*\inX{R}{p\times m}, l=0,\dots,L$ are unknown.
By defining state $x(t)\inX{R}{(p+m)L}$ as
\begin{equation*}
  x(t)\coloneqq [y(t-1)^\top\cdots y(t-L)^\top\ u(t-1)^\top\cdots u(t-L)^\top]^\top,
\end{equation*}
we obtain a state space representation
\begin{equation}\label{AR:stateeq}
\left\{
\begin{array}{ll}
  x_+ & =\mathbf{A}^*x+\mathbf{B}^*u,\\
  y & =A^*x+B^*u
\end{array}
\right.
\end{equation}
with
\begin{equation}\label{AR:state_mat}
  \mathbf{A}^*\coloneqq \left[\begin{array}{cc|cc}
    \multicolumn{4}{c}{A^*}\\\hline
    I&0&0&0\\
    0&0&0&0\\
    0&0&I&0     
  \end{array}\right],\ 
  \mathbf{B}^*\coloneqq \left[\begin{array}{c}
    B^*\\\hline 0\\ I\\ 0
  \end{array}\right]
\end{equation}
and
  $A^*\coloneqq [A_1^*\ \cdots\ A_L^*\ B_1^*\ \cdots\ B_L^*],\ B^*\coloneqq B_0^*$
where the time index $t$ is omitted.
As in~\cite{DDCQMI:Steentjes2022_Cont_Sys_Let_Covariance,DDCQMI:Waarde2024_TAC_AR,DDCQMI:Lidong2024}, we consider a strictly proper dynamic output feedback controller whose input depends on the $L$-long previous output time series.
This controller can equivalently be represented by a state feedback controller $u=Kx$ for~\eqref{AR:stateeq}~\cite{DDCQMI:Steentjes2022_Cont_Sys_Let_Covariance}.

Suppose that we have data $(Y,X,U)$ additively perturbed by $(\Delta_Y,\Delta_X,\Delta_U)$ where the clean data satisfies the dynamics, which is equivalent to
\begin{equation}\label{AR:outputeq_mat}
  [I\ A^*\ B^*]\mathbf{X}=[I\ A^*\ B^*]\Delta
\end{equation}
where $\mathbf{X}=[Y^\top\ -X^\top\ -U^\top]^\top$ and $\Delta\coloneqq [\Delta_Y^\top\ -\Delta_X^\top\ -\Delta_Z^\top]^\top$.
We refer to $\Delta$ as data perturbation for the AR model.
As in Assumption~\ref{ass:noiseset}, we assume QMI and subspace constraints for data perturbation.
\begin{assumption}\label{ass:noiseset_AR}
 Data perturbation for the AR model satisfies
  \begin{equation}\label{AR:noise_set}
    \Delta\in\mathcal{D}\coloneqq \left\{E\hDel|\hDel^\top\in\Zab{\hat{n}}{T}(\hPhi)\right\}.
  \end{equation}
  with known matrices $E\inX{R}{(p+m)(L+1)\times \hat{n}}$ and $\hPhi\in\mathbf{\Pi}_{\hat{n},T}$.
\end{assumption}
Data perturbation can describe exogenous disturbance and measurement noise for the AR model as well.
Indeed, $E=[I_p\ 0_{p,(p+m)L+m}]^\top$ and $E=I$ lead to the problems in~\cite{DDCQMI:Steentjes2022_Cont_Sys_Let_Covariance,DDCQMI:Waarde2024_TAC_AR} and that in~\cite{DDCQMI:BISOFFI2021}, respectively.


We define the set of systems for the AR model consistent with the data as
\begin{equation}\label{AR:dfn_sigma}
  \Sigma \coloneqq \left\{(A,B)|\exists \Delta\in\mathcal{D}\st[I\ A\ B]\mathbf{X}=[I\ A\ B]\Delta\right\}.
\end{equation}
Note that $(A,B)$ is not the system matrices of the state space equation~\eqref{AR:stateeq} but its unknown parameters.
Due to the existence of this structure, it is not straightforward to extend the discussion in Sec.~\ref{sec:quadratic_stabilization} to the AR model case.

After all, data informativity for quadratic stabilization for the AR model is defined as follows.
\begin{definition}\label{dfn:datainfo_AR}
  Data $(Y,X,U)$ is said to be informative for quadratic stabilization for the AR model under data perturbation when there exists a pair of $P\succ 0\inX{S}{(p+m)L}$ and $K\inX{R}{m\times (p+m)L}$ such that it satisfies
  \begin{equation}\label{AR:qstab}
    P-(\mathbf{A}+\mathbf{B}K)P(\mathbf{A}+\mathbf{B}K)^\top\succ 0
  \end{equation}
  for any $(A,B)\in\Sigma$.
\end{definition}


\subsection{LMI Characterization of Data Informativity}
First, we obtain a QMI characterization of $\Sigma$ by following the approach in Sec.~\ref{subsec:qstab_sysset}.
\begin{proposition}\label{prop:AR_sys_set}
    Let Assumption~\ref{ass:noiseset_AR} hold and define
    \begin{equation}\label{AR:dfn_tildesigma}
      \SigmaR\coloneqq \left\{(A,B)\middle| [A\ B]^\top\in\Zab{p}{(p+m)L+m}(N)\right\},
    \end{equation}
    where $N\coloneqq [E\ \mathbf{X}]\hPhi[E\ \mathbf{X}]^\top$.
    Then, $\Sigma\subseteq\SigmaR$ holds.
    In particular, $\Sigma=\SigmaR$ holds if $\im E\supseteq [I_p\ 0_{(p+m)L+m}]^\top$ or $\hPhi_{22}\prec 0$.    
\end{proposition}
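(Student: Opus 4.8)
The plan is to transfer the three-step development of Section~\ref{sec:quadratic_stabilization} — namely Lemma~\ref{lem:M12}, Lemma~\ref{lem:sys_set}, and Theorem~\ref{thm:sys_set} — essentially verbatim to the AR setting, since those arguments never exploit the square structure $A\inX{R}{n\times n}$ and treat $[I\ A\ B]$ as a single opaque matrix. Only the dimension bookkeeping changes: $I_p$ replaces $I_n$, and the unknown parameter $[A\ B]^\top$ now lives in $\mathbb{R}^{((p+m)L+m)\times p}$ rather than $\mathbb{R}^{(n+m)\times n}$, consistent with $N=[E\ \mathbf{X}]\hPhi[E\ \mathbf{X}]^\top$ being of size $(p+m)(L+1)$.

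First, I would re-derive the analogue of Lemma~\ref{lem:M12}. From~\eqref{AR:dfn_sigma} and Assumption~\ref{ass:noiseset_AR}, $(A,B)\in\Sigma$ iff there is $\hDel$ with $\hDel^\top\in\Zab{\hat{n}}{T}(\hPhi)$ and $[I\ A\ B](\mathbf{X}-E\hDel)=0$. Substituting the explicit parametrization~\eqref{eq:explicit_hDel} of $\Zab{\hat{n}}{T}(\hPhi)$ (valid since $\hPhi\in\mathbf{\Pi}_{\hat{n},T}$), namely $\hDel=\hat{\Delta}_{\mathcal{R}}(M_1)+\hat{\Delta}_{\mathcal{N}}(M_2)$ with $M_1M_1^\top\preceq I$, and splitting the resulting identity by right-multiplying separately with $\hPhi_{22}$ and with $(I-\hPhi_{22}\hPhi_{22}^\dagger)$, yields the pair of conditions analogous to \eqref{eq:M1} and \eqref{eq:M2} (with $[I_p\ A\ B]$ in place of $[I_n\ A\ B]$). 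Defining $\SigmaR$ as the set of $(A,B)$ admitting an $M_1$ satisfying the first and $\SigmaN$ the set admitting an $M_2$ satisfying the second, I obtain $\Sigma=\SigmaR\cap\SigmaN$, hence in particular $\Sigma\subseteq\SigmaR$ once $\SigmaR$ is shown to equal the QMI set in \eqref{AR:dfn_tildesigma}.

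Second, I would establish that QMI characterization of $\SigmaR$ exactly as in the proof of Lemma~\ref{lem:sys_set}. With $S\coloneqq[I_p\ A\ B]$, $Q\coloneqq(\hPhi|\hPhi_{22})^{1/2}$, $R\coloneqq(-\hPhi_{22})^{1/2}$, and $\hDel_c\coloneqq-\hPhi_{12}\hPhi_{22}^\dagger$, membership $(A,B)\in\SigmaR$ reduces to solvability of $S(\mathbf{X}-E\hDel_c)R^2=SEQM_1R$ with $M_1M_1^\top\preceq I$; decomposing $\hPhi$ via \eqref{N_bunkai} shows $[A\ B]^\top\in\Zab{p}{(p+m)L+m}(N)$ is equivalent to solvability of $S(\mathbf{X}-E\hDel_c)R=SEQ\tilde M_1$ with $\tilde M_1\tilde M_1^\top\preceq I$, and the two solvability statements are interchanged through $\tilde M_1=M_1RR^\dagger$ and $M_1=\tilde M_1$, invoking \cite[Lemma A.1(a)]{DDCQMI:Waarde2023_siam_qmi}. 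None of these manipulations inspects the internal block structure of $S$, so they carry over unchanged and give $\SigmaR$ equal to \eqref{AR:dfn_tildesigma}, whence $\Sigma\subseteq\SigmaR$.

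Finally, for the equality I would reproduce the proof of Theorem~\ref{thm:sys_set}. Existence of $M_2$ is equivalent to $\im[I\ A\ B]\mathbf{X}(I-\hPhi_{22}\hPhi_{22}^\dagger)\subseteq\im[I\ A\ B]E$. If $\im E\supseteq\im[I_p\ 0_{(p+m)L+m}]^\top$, choosing $F$ with $EF=[I_p\ 0]^\top$ gives $\im[I\ A\ B]E\supseteq\im[I\ A\ B]EF=\mathbb{R}^p$ for every $(A,B)$, so the inclusion holds automatically; if $\hPhi_{22}\prec 0$ then $I-\hPhi_{22}\hPhi_{22}^\dagger=0$ and the condition is vacuous. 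Either way $\SigmaN$ is the whole space and $\Sigma=\SigmaR$. I do not anticipate a genuine obstacle; the only point meriting care is confirming that squareness of $A$ is nowhere invoked in the original proofs — which holds, since they manipulate $[I\ A\ B]$ only as a whole — so the sole substantive task is correct dimension bookkeeping. The mild subtlety worth double-checking is that $\im I_p=\mathbb{R}^p$ is the correct target space in the $\im E$ argument, which it is, because the first block row of $[I\ A\ B]$ is exactly $I_p$.
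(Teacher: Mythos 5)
Your proposal is correct and is exactly the paper's approach: the paper's proof simply states that the proposition follows by combining Lemma~\ref{lem:sys_set} and Theorem~\ref{thm:sys_set} with the same reasoning (proof omitted), and your write-up supplies precisely that reasoning, correctly observing that the arguments treat $[I\ A\ B]$ as an opaque matrix so only the dimension bookkeeping changes.
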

\begin{proof}
    This proposition corresponds to the combination of Lemma~\ref{lem:sys_set} and Theorem~\ref{thm:sys_set}, and follows by applying the same reasoning.
    The proof is omitted.
\end{proof}

The set $\Sigma$ is characterized by the QMI involving the unknown parameters $(A,B)$, whereas the Lyapunov inequality \eqref{AR:qstab} is expressed in terms of the system matrices $(\mathbf{A},\mathbf{B})$.
This difference prevents the direct application of Theorem~\ref{thm:Slem_new}.
To address this issue, we reformulate \eqref{AR:qstab} into conditions described by a QMI involving $(A,B)$.
From \eqref{AR:state_mat}, it follows that $\mathbf{A}=[A^\top\ J_1^\top]^\top,\mathbf{B}=[B^\top\ J_2^\top]^\top$, where
\begin{equation*}
  J_1\coloneqq\left[\begin{array}{cccc}
    I_{p(L-1)}&0&0         &0\\
    0         &0&0         &0\\
    0         &0&I_{m(L-1)}&0     
  \end{array}\right],\ 
  J_2\coloneqq \left[\begin{array}{c}
    0\\ I_m\\ 0
  \end{array}\right]
\end{equation*}
Substituting this into \eqref{AR:qstab} and applying Schur complement, we obtain the following lemma.
\begin{lemma}\label{lem:AR_QMI}
    We denote $J_1+J_2 K$ by $J_K$ and decompose $P\inX{S}{(p+m)L}$ as
    \begin{equation}\label{AR:P_decomposition}
        P=\sqmat{P_{11}}{P_{12}}{P_{21}}{P_{22}},\ P_{11}\inX{S}{p},\ P_{22}\inX{S}{p(L-1)+mL}.
    \end{equation}
    A pair of $P\succ 0\inX{S}{(p+m)L}$ and $K\inX{R}{m\times (p+m)L}$ satisfies \eqref{AR:qstab} if and only if it satisfies
    \begin{subequations}
        \begin{align}
          \label{AR:lem1}
          &\IAB^\top M_{\rm AR}\IAB\succ 0,\\
          \label{AR:lem2}
          &Z\succ 0,
        \end{align}
    \end{subequations}
    where $Z$ and $M_{\rm AR}$ are defined as
    $Z\coloneqq P_{22}-J_KPJ_K^\top$ and
    { \begin{equation}
    \begin{split}
      M_{\rm AR}\coloneqq&\sqmat{P_{11}}{0}{0}{-\matvec{I}{K}P\matvec{I}{K}^\top}\\
      &-\matvec{P_{12}}{-\matvec{I}{K}PJ_K^\top}Z^{-1}\matvec{P_{12}}{-\matvec{I}{K}PJ_K^\top}^\top.
    \end{split}
    \end{equation}}
\end{lemma}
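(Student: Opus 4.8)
The plan is to reduce the structured Lyapunov inequality \eqref{AR:qstab} to a $2\times2$ block inequality and then apply a Schur complement with respect to the pivot $Z$. First I would substitute the block forms $\mathbf{A}=[A^\top\ J_1^\top]^\top$ and $\mathbf{B}=[B^\top\ J_2^\top]^\top$, which give $\mathbf{A}+\mathbf{B}K=[A_K^\top\ J_K^\top]^\top$ with $A_K\coloneqq A+BK$ and $J_K=J_1+J_2K$ as in the statement. Partitioning $P$ as in \eqref{AR:P_decomposition}, the term $(\mathbf{A}+\mathbf{B}K)P(\mathbf{A}+\mathbf{B}K)^\top$ expands into a $2\times2$ block matrix with blocks $A_KPA_K^\top$, $A_KPJ_K^\top$, $J_KPA_K^\top$, $J_KPJ_K^\top$, so that \eqref{AR:qstab} becomes
$$\sqmat{P_{11}-A_KPA_K^\top}{P_{12}-A_KPJ_K^\top}{P_{21}-J_KPA_K^\top}{P_{22}-J_KPJ_K^\top}\succ 0,$$
whose bottom-right block is exactly $Z=P_{22}-J_KPJ_K^\top$.

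Next I would invoke the standard Schur complement criterion for a symmetric block matrix: the displayed matrix is positive definite if and only if $Z\succ 0$, which is precisely \eqref{AR:lem2}, and its Schur complement with respect to $Z$,
$$(P_{11}-A_KPA_K^\top)-(P_{12}-A_KPJ_K^\top)Z^{-1}(P_{12}-A_KPJ_K^\top)^\top,$$
is positive definite. It then remains only to identify this Schur complement with the quadratic form $\IAB^\top M_{\rm AR}\IAB$ appearing in \eqref{AR:lem1}.

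The key algebraic identity I would exploit is $[A\ B]\matvec{I}{K}=A+BK=A_K$, read directly off the block structure of $\matvec{I}{K}$. Feeding $\IAB=[I\ A^\top\ B^\top]^\top$ into the block-diagonal part of $M_{\rm AR}$ and using this identity yields $P_{11}-A_KPA_K^\top$; feeding it into the outer-product part of $M_{\rm AR}$ collapses the lower stacked block via $[A\ B]\matvec{I}{K}PJ_K^\top=A_KPJ_K^\top$, so the pre-multiplied factor becomes $P_{12}-A_KPJ_K^\top$ and the whole term reproduces the off-diagonal correction. Matching both contributions shows $\IAB^\top M_{\rm AR}\IAB$ equals the Schur complement above, which establishes the equivalence in both directions (note $Z\succ 0$ supplies the invertibility of the pivot needed for the Schur step).

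The main obstacle is purely the bookkeeping in this last identification: one must track the three stacked blocks $I$, $A^\top$, $B^\top$ of $\IAB$ against the congruent structure of $M_{\rm AR}$ and verify that the cross terms assemble cleanly into $A_KPJ_K^\top$ and $A_KPA_K^\top$ rather than producing spurious mixed terms. Everything else—the $2\times2$ block expansion of the Lyapunov term and the Schur complement reduction—is routine once $Z\succ 0$ is in hand.
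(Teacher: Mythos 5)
Your proposal is correct and follows essentially the same route as the paper's proof: write the closed-loop matrix as $[A_{\rm cl}^\top\ J_K^\top]^\top$, expand \eqref{AR:qstab} into the $2\times 2$ block inequality, take the Schur complement with respect to $Z$, and identify the resulting Schur complement with $\IAB^\top M_{\rm AR}\IAB$ via $A_{\rm cl}=[A\ B]\matvec{I}{K}$. The only difference is presentational—you spell out the bookkeeping of the final identification in more detail than the paper does.
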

\begin{proof}
    The closed-loop matrix is described as $\mathbf{A}+\mathbf{B}K = [A_{\rm cl}^\top\ J_K^\top]^\top$ where $A_{\rm cl} \coloneqq A+BK$.
    Substituting this into \eqref{AR:qstab}, we obtain
    \begin{align}\label{AR:prf_block}
      \sqmat{P_{11}}{P_{12}}{P_{21}}{P_{22}}
      -\sqmat{A_{\rm cl}PA_{\rm cl}^\top}{A_{\rm cl}PJ_K^\top}{J_KPA_{\rm cl}^\top}{J_KPJ_K^\top}\succ 0.
    \end{align}
    Applying Schur complement, we obtain
    $P_{11}-A_{\rm cl}PA_{\rm cl}^\top-(P_{12}-A_{\rm cl}PJ_K^\top)Z^{-1}(P_{21}-J_KA_{\rm cl}^\top)\succ 0$ and $Z\succ 0.$
    Since $A_{\rm cl} =[A\ B][I\ K^\top]^\top$, the first matrix inequality is equivalent to \eqref{AR:lem1}.
\end{proof}

From Proposition~\ref{prop:AR_sys_set} and Lemma~\ref{lem:AR_QMI}, it follows that matrices $P\succ 0$ and $K$ satisfy \eqref{AR:qstab} for any $(A,B)\in\SigmaR$ if and only if they satisfy \eqref{AR:lem2} and
\begin{equation}\label{AR:inclusion}
    \Zab{p}{pL+m(L+1)}(N)\subseteq\Zab{p}{pL+m(L+1)}^+(M_{\rm AR}).
\end{equation}
Therefore, since $\Sigma\subseteq\SigmaR$, data is informative if \eqref{AR:lem2} and \eqref{AR:inclusion} hold with some $P\succ 0$ and $K$.
Also, the condition becomes necessary for data informativity when $\Sigma=\SigmaR$.
The condition \eqref{AR:inclusion} can be converted into an LMI condition by Theorem~\ref{thm:Slem_new} and Schur complement.
Note that proposition~\ref{prop:Slem_beta} is inapplicable because $N\in\mathbf{\Pi}_{p,pL+m(L+1)}$ does not necessarily hold.
\begin{theorem}\label{thm:LMI_AR}
    Consider the data $(Y,X,U)$ satisfying \eqref{AR:outputeq_mat} under Assumption~\ref{ass:noiseset_AR}.
    Then, the data $(Y,X,U)$ is informative for quadratic stabilization for the AR model if there exists an $(p+m)L$-dimensional positive definite matrix $P\succ 0$, a matrix $L\inX{R}{m\times (p+m)L}$, and scalars $\alpha\geq 0$ and $\beta>0$ such that
    \begin{subequations}\label{AR:LMI_result}
    \begin{align}
      &\left[
        \begin{array}{ccc|cc}
          P_{11}-\beta I&0&0&P_{12}&0\\
          0&0&0&0&P\\
          0&0&0&0&L\\\hline
          P_{21}&0&0&P_{22}&J_{P,L}\\
          0&P&L^\top&J_{P,L}^\top&P
        \end{array}
      \right]-\alpha\left[\begin{array}{c|c}
        {N}&{0}\\\hline{0}&{0}
      \end{array}\right]\succeq 0\label{AR:lmi1}\\
      &\sqmat{P_{22}}{J_{P,L}}{J_{P,L}^\top}{P}\succ 0\label{AR:lmi2}
    \end{align}
    \end{subequations}
    where $J_{P,L}\coloneqq J_1P+J_2L$, $N\coloneqq [E\ \mathbf{X}]\hPhi[E\ \mathbf{X}]^\top$ and $P$ is decomposed as in \eqref{AR:P_decomposition}.
    In particular, when $\im E\supseteq\im[I_p\ 0_{p,(p+m)L+m}]^\top$ or $\hPhi_{22}\prec 0$, \eqref{AR:LMI_result} is a necessary and sufficient condition.
    Moreover, if \eqref{AR:LMI_result} is feasible, the controller $K=LP^{-1}$ stabilizes any system in $\Sigma$.
\end{theorem}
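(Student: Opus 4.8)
The plan is to mirror the argument used for Theorem~\ref{thm:qstab} and Theorem~\ref{thm:h2}, but now routing everything through the AR-specific reductions already established in Proposition~\ref{prop:AR_sys_set} and Lemma~\ref{lem:AR_QMI}. By Proposition~\ref{prop:AR_sys_set} we have $\Sigma\subseteq\SigmaR$, where $\SigmaR$ is the solution set of the QMI defined by $N$, and by Lemma~\ref{lem:AR_QMI} a pair $(P\succ 0,K)$ certifies the Lyapunov inequality \eqref{AR:qstab} for a fixed $(A,B)$ precisely when \eqref{AR:lem1} and \eqref{AR:lem2} hold. Combining these, the data is informative provided \eqref{AR:lem2} holds and the inclusion \eqref{AR:inclusion}, namely $\Zab{p}{\cdot}(N)\subseteq\Zab{p}{\cdot}^+(M_{\rm AR})$, is valid for a common $(P,K)$; and this becomes necessary once $\Sigma=\SigmaR$, i.e.\ under $\im E\supseteq\im[I_p\ 0]^\top$ or $\hPhi_{22}\prec 0$. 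So the first step is simply to record this reduction.

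The second step is to linearize \eqref{AR:inclusion} with the new matrix S-lemma. I first verify that Theorem~\ref{thm:Slem_new} applies to the pair $(M_{\rm AR},N)$: the set $\Zab{p}{\cdot}(N)$ is nonempty because $(A^*,B^*)\in\Sigma\subseteq\SigmaR$; and writing $M_{\rm AR,22}=-\matvec{I}{K}(P+PJ_K^\top Z^{-1}J_K P)\matvec{I}{K}^\top$ shows $M_{\rm AR,22}\preceq 0$ whenever $P,Z\succ 0$. The crucial kernel condition $\ker M_{\rm AR,22}\subseteq\ker M_{\rm AR,12}$ holds because both $M_{\rm AR,12}$ and $M_{\rm AR,22}$ carry the common right factor $\matvec{I}{K}^\top$, so any vector annihilated by $\matvec{I}{K}^\top$ (hence by $M_{\rm AR,22}$) is annihilated by $M_{\rm AR,12}$ as well. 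Theorem~\ref{thm:Slem_new} then equates \eqref{AR:inclusion} with the existence of $\alpha\geq 0,\beta>0$ satisfying $M_{\rm AR}-\alpha N\succeq\sqmat{\beta I}{0}{0}{0}$; note that Proposition~\ref{prop:Slem_beta} is unavailable here because $N\in\mathbf{\Pi}$ may fail, which is exactly why the new S-lemma is needed.

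The third and most laborious step converts $M_{\rm AR}-\alpha N\succeq\sqmat{\beta I}{0}{0}{0}$ together with $Z\succ 0$ into the block LMIs \eqref{AR:lmi1} and \eqref{AR:lmi2} via the change of variables $L\coloneqq KP$. Under this substitution $KP=L$, $J_KP=J_1P+J_2L=J_{P,L}$, and $Z=P_{22}-J_KPJ_K^\top=P_{22}-J_{P,L}P^{-1}J_{P,L}^\top$, so $Z\succ 0$ is, by a Schur complement in $P\succ 0$, exactly \eqref{AR:lmi2}. The obstacle for the main inequality is that $M_{\rm AR}$ is itself defined through an inner Schur complement in $Z$, whose $(2,2)$ block further hides the term $\matvec{I}{K}P\matvec{I}{K}^\top=\matvec{P}{L}P^{-1}\matvec{P}{L}^\top$. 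I would reverse these two Schur complements in sequence: first expand the $Z^{-1}$ term (legitimate since $Z\succ 0$) to introduce the auxiliary block that becomes block $4$ of \eqref{AR:lmi1}, then expand the remaining $\matvec{P}{L}P^{-1}\matvec{P}{L}^\top$ to introduce the $P$-block that becomes block $5$, with the cross term $J_{P,L}$ linking them. The bookkeeping must confirm the re-expanded entries reproduce exactly $P_{12},P_{22},J_{P,L},P,L$ in \eqref{AR:lmi1}; I expect this to be the only genuinely delicate calculation, since it combines the two nested Schur complements with the nonlinear-to-linear substitution at once.

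Finally, collecting the equivalences gives the \emph{if} direction, and under $\im E\supseteq\im[I_p\ 0]^\top$ or $\hPhi_{22}\prec 0$ Proposition~\ref{prop:AR_sys_set} yields $\Sigma=\SigmaR$, upgrading it to \emph{necessary and sufficient}. The controller is recovered as $K=LP^{-1}$ from the defining substitution $L=KP$, and it stabilizes every $(A,B)\in\Sigma$ because $\Sigma\subseteq\SigmaR=\Zab{p}{\cdot}(N)$ and the inclusion \eqref{AR:inclusion}, via Lemma~\ref{lem:AR_QMI}, guarantees \eqref{AR:qstab} throughout $\SigmaR$.
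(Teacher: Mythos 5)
Your proposal is correct and follows essentially the same route as the paper's proof: the reduction through Proposition~\ref{prop:AR_sys_set} and Lemma~\ref{lem:AR_QMI}, the verification of the hypotheses of Theorem~\ref{thm:Slem_new} (nonemptiness via the true system, $M_{{\rm AR},22}\preceq 0$, and the kernel condition), and the conversion to \eqref{AR:LMI_result} via the substitution $L=KP$ together with two nested Schur complements (first in $Z\succ 0$, producing the block with $P_{22}$ and $J_{P,L}$, then in $P\succ 0$, producing the last block) are exactly the paper's steps. One point to tighten: as worded, your kernel argument only establishes $\ker\matvec{I}{K}^\top\subseteq\ker M_{{\rm AR},12}$, whereas the required inclusion $\ker M_{{\rm AR},22}\subseteq\ker M_{{\rm AR},12}$ additionally needs $\ker M_{{\rm AR},22}\subseteq\ker\matvec{I}{K}^\top$; this holds because the middle factor $P+PJ_K^\top Z^{-1}J_KP$ in $M_{{\rm AR},22}$ is positive definite, which is how the paper closes this step (phrased as $\im M_{{\rm AR},22}=\im\matvec{I}{K}$).
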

\begin{proof}
    First, we confirm that $M_{\rm AR}$ and $N$ satisfy the preconditions of Theorem~\ref{thm:Slem_new}.
    From the definition, the submatrices of $M_{\rm AR}$ are
    \begin{align}
      M_{{\rm AR},{22}}&=-\matvec{I}{K}(P+PJ_K^\top Z^{-1}J_KP)\matvec{I}{K}^\top,\\
      M_{{\rm AR},{12}}&=-P_{12}Z^{-1}J_KP\matvec{I}{K}^\top.
    \end{align}
    Clearly, $M_{{\rm AR},{22}}\preceq0$ holds.
    Also, since $\im M_{{\rm AR},{22}} = \im [I\ K^\top]^\top$, $\im M_{{\rm AR},{12}}\subseteq\im M_{{\rm AR},{22}}$ holds.
    Moreover, $\Zab{p}{pL+m(L+1)}(N)$ is nonempty because $\SigmaR$ contains the true system.

    We prove that $P\succ 0$ and $K$ satisfy \eqref{AR:lem2} and \eqref{AR:inclusion} if and only if $P\succ 0$ and $L=KP$ satisfy \eqref{AR:LMI_result} with some $\alpha\geq 0$ and $\beta>0$.
    By applying Schur complement, \eqref{AR:lem2} is equivalently converted into \eqref{AR:lmi2}.
    By applying Theorem~\ref{thm:Slem_new} and Schur complement twice, \eqref{AR:inclusion} is equivalently converted into \eqref{AR:lmi1}.
    Therefore, if there exists $P\succ 0,L,\alpha\geq 0$ and $\beta>0$ such that \eqref{AR:LMI_result}, $P$ and $K$ satisfy \eqref{AR:lem2} and \eqref{AR:inclusion}, which means data is informative.
    Moreover, when $\im E\supseteq\im[I_p\ 0_{p,(p+m)L+m}]^\top$ or $\hPhi_{22}\prec 0$, we obtain $\Sigma=\SigmaR$ from Proposition~\ref{prop:AR_sys_set}.
    Then, if the data is informative, there exist $P\succ 0$ and $K$ such that \eqref{AR:lem2} and \eqref{AR:inclusion}, which implies that there exist $P\succ 0,L,\alpha\geq 0$ and $\beta>0$ such that \eqref{AR:LMI_result}.
    %
    %
\end{proof}

\emph{Remark:}
We compare our approach in this section with the previous studies that address exogenous disturbances~\cite{DDCQMI:Steentjes2022_Cont_Sys_Let_Covariance, DDCQMI:Waarde2024_TAC_AR}.
In the previous studies, the matrix S-procedure is applied to QMIs involving $H_1[A\ B]$ with $H_1\coloneqq [I_p\ 0_{p,(p+m)L-p}]^\top$, whereas we use the QMIs involving $[A\ B]$.
Lyapunov inequality can be readily transformed into a QMI involving $H_1[A\ B]$ because
$[\mathbf{A}\ \mathbf{B}] = H_1[A\ B] + H_2[J_1\ J_2],$
where $H_2\coloneqq [I_{(p+m)L-p}\ 0_{(p+m)L-p,p}]^\top$.
Also, the QMI characterizing $\Sigma$ with $N$ can be rewritten as a QMI involving $H_1[A\ B]$ by applying Proposition~\ref{prop:lin_eq_qmi} in Appendix.
Proposition~\ref{prop:Slem_beta} is then applied to these two QMIs.
However, the condition $N_{22}\prec 0$ is required for a nonconservative application of Proposition~\ref{prop:lin_eq_qmi}.
In the exogenous disturbance case, since $E=[I_p\ 0_{p,(p+m)L+m}]^\top$ and {\small$N_{22}=-\matvec{X}{U}\hPhi_{22}\matvec{X}{U}^\top$}, the condition $N_{22}\prec 0$ implies that $[X^\top\ U^\top]^\top$ has full row rank.
This restrictive assumption appears in Lemma 1 of~\cite{DDCQMI:Steentjes2022_Cont_Sys_Let_Covariance} and in Assumption 4 and Theorem 20 of~\cite{DDCQMI:Waarde2024_TAC_AR}.
Since our approach does not rely on Proposition~\ref{prop:lin_eq_qmi}, it can handle the case where $N_{22}\not\prec 0$, meaning that $\Sigma$ is unbounded.
Therefore, our approach remains applicable to rank-deficient data, as demonstrated in Sec~\ref{subsec:muri}.


\section{Data Informativity under Structured Data Perturbation}\label{sec:structured}
This section addresses structured data perturbation given by the sum of linearly transformed perturbations each of which is characterized by a single QMI.
We derive a sufficient condition for data informativity in the presence of the structured data perturbation.

\subsection{Representation of Structured Data Perturbation}
We introduce the structured data perturbation given by the sum of linearly transformed perturbations each of which is characterized by a single QMI.
\begin{assumption}\label{ass:noiseset_sum}
    Data perturbation satisfies
    \begin{equation}\label{sum:noise_sum}
        \Delta\in\mathcal{D}_{\rm str}\coloneqq\left\{
        \textstyle{\sum_{j=1}^{J} E_j\Delta_jF_j}\middle|\textstyle{\Delta_j^\top\in\Zab{n_j}{T_j}(\Phi_j)}
        \right\}
    \end{equation}
    with known matrices $E_j\inX{R}{n_d\times n_j}, F_j\inX{R}{T_j\times T}$ and $\hPhi_j\in\mathbf{\Pi}_{n_j,T_j}$ for $j = 1,\dots,J$, where $n_d$ denotes an appropriate dimension selected to match the data.
\end{assumption}
Assumption~\ref{ass:noiseset_sum} accommodates a variety of perturbation structures through the following representative scenarios.
\begin{enumerate}
    \item 
    Superposition of exogenous disturbance and measurement noise, which results in data $(X_+,X,U)$ perturbed as $X_+=X_+^*+\Delta_{X_+},\quad X=X^*+\Delta_X,\quad U=U^*+\Delta_U$ and $X_{+}^*=A^*X^*+B^*U^*+D$.
    The disturbance $D$ and perturbation $\Delta_m= [\Delta_{X_+}^\top\,-\Delta_X^\top\,-\Delta_U^\top]^\top$ are assumed to satisfy $D^\top\in\Zab{n}{T}(\Phi_d)$ and $\Delta_m^\top\in\Zab{2n+m}{T}(\Phi_m)$.
    The combined perturbation set can then be represented as a summation:
    \begin{equation}
        \mathcal{D}_{\rm str} = \left\{E_d D+\Delta_m\middle|
        \begin{aligned}
        &D^\top\in\Zab{n}{T}(\Phi_d),\\
        &\Delta_m^\top\in\Zab{2n+m}{T}(\Phi_m)\end{aligned} \right\},
    \end{equation}
    where $E_d=[I_n\ 0_{n,n+m}]^\top$.
    
    \item 

    Hankel-structured perturbation
    \begin{align}
    &\Delta=\begin{bmatrix}
      \delta_0(0)&\cdots&\delta_0(T-1)\\
      \vdots&\ddots&\vdots\\
      \delta_0(L-1)&\cdots&\delta_0(T+L-2)
    \end{bmatrix},
    \end{align}
    where $\Delta_0=[\delta_0(0)\cdots\delta_0(T+L-2)]\inX{R}{p\times (T+L-1)}$ is subject to $\Delta_0^\top\in\Zab{d}{T+L-1}(\Phi_0)$ with a known matrix $\Phi_0$.
    This form of structured perturbation typically arises in the sequential data.
    The corresponding perturbation set can be expressed with $\Delta = \sum_{l=0}^{L-1}E_l\Delta_0F_l$ with $E_l = [0_{p,pl}\ I_p\ 0_{p,p(L-l-1)}]^\top$ and $F_l = [0_{T,l}\ I_T\ 0_{T,L-l-1}]^\top$.

    \item
    Instantaneously bounded perturbation $\Delta = E[\delta(0)\cdots\delta(T-1)]$ subject to $\delta(t)^\top\in\Zab{d}{1}(\Phi_t)$ at each time step $t=0,\dots,T-1$, which has been studied in~\cite{DDCQMI:BISOFFI2021,DDCQMI:BISOFFI2024_CSL}.
    The total perturbation can be written in the summation form with $\Delta = \sum_{t=0}^{T-1}E\Delta_tF_t$ holds where $F_t=\univec{t+1}{T}^\top$.

    \item 
    Element-wise bounded perturbation where each $(i,j)$-th entry $\delta_{ij}$ is individually bounded as $\delta_{ij}\in\Zab{1}{1}(\Phi_{ij})$.
    The perturbation set can then be represented with $\Delta =\sum_{i,j}E_i\delta_{ij}F_j$ with $E_i = \univec{i}{n_d}$ and $F_j = \univec{j}{T}^\top$.
\end{enumerate}


We approximate $\mathcal{D}_{\rm str}$ with the solution set of a single QMI, thereby reducing the informativity problem under Assumption~\ref{ass:noiseset_sum} to that in the previous sections.

\subsection{Outer QMI Approximation of Structured Perturbation Set}
Consider the set $\bar{\mathcal{D}}(\Phi)\coloneqq \{\Delta|\Delta^\top\in\Zab{n_d}{T}(\Phi)\}$ characterized by a single QMI parameterized by a matrix $\Phi$.
First, we derive an LMI characterization for $\Phi$ such that the set serves as an outer approximation of the structured perturbation set $\mathcal{D}_{\rm str}$, i.e., $\mathcal{D}_{\rm str}\subseteq \bar{\mathcal{D}}(\Phi)$, based on an ellipsoidal approximation technique for sums of vector-valued ellipsoids, as detailed in~\cite[Sec. 3.7.4]{boyd1994linear}.
In that section, an ellipsoid containing a sum vector $x = \sum_{j=1}^J x_j$ is constructed, where each component $x_j$ lies within a given ellipsoid.
The resulting approximation is formulated via a quadratic inequality involving the stacked vector $y\coloneqq [x_1^\top\cdots x_J^\top]^\top$, leveraging the (vector) S-procedure.

We introduce the horizontally concatenated matrix $\Gamma\coloneqq [E_1\Delta_1\ E_2\Delta_2\ \cdots\ E_J\Delta_J]\inX{R}{n_d\times \sum_{j=1}^{J}T_j}$.
We characterize the condition $\mathcal{D}_{\rm str}\subseteq \bar{\mathcal{D}}(\Phi)$ using QMIs.
\begin{lemma}\label{lem:stack}
    Let Assumption~\ref{ass:noiseset_sum} hold. 
    $\mathcal{D}_{\rm str}\subseteq \bar{\mathcal{D}}(\Phi)$ holds if
    \begin{equation}\label{sum:lem_stack}
        \textstyle{
        \bigcap_{1\leq j\leq J}\Zab{n_d}{\sum_{j=1}^JT_j}(\Psi_j)\subseteq\Zab{n_d}{\sum_{j=1}^JT_j}(\Psi),
        }
    \end{equation}
    where
    \begin{align*}
        \Psi_j &\coloneqq \sqmat{E_j}{0}{0}{U_j}\Phi_j\sqmat{E_j}{0}{0}{U_j}^\top,\\
        U_j&\coloneqq [0_{T_j,\sum_{k=1}^{j-1}T_k}\ I_{T_j}\ 0_{T_j,\sum_{k=j+1}^{J}T_k}]^\top,\\
        \Psi &\coloneqq \sqmat{I_{n_d}}{0}{0}{U}\Phi\sqmat{I_{n_d}}{0}{0}{U}^\top,
        U\coloneqq \sum_{j=1}^{J}U_jF_j
    \end{align*}
\end{lemma}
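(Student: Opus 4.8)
The plan is to lift the problem to the concatenated matrix $\Gamma = [E_1\Delta_1\ \cdots\ E_J\Delta_J]$ defined before the statement and to transport membership through the hypothesized inclusion. Write $\bar T \coloneqq \sum_{j=1}^J T_j$. Two structural identities drive the argument. Since $U_j$ selects the $j$-th block of columns, $\Gamma U_j = E_j\Delta_j$, and therefore $\Gamma U = \sum_{j=1}^J \Gamma U_j F_j = \sum_{j=1}^J E_j\Delta_j F_j = \Delta$; dually, $U_j^\top\Gamma^\top = (\Gamma U_j)^\top = \Delta_j^\top E_j^\top$ picks out the $j$-th block row of $\Gamma^\top$. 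I would fix an arbitrary $\Delta\in\mathcal{D}_{\rm str}$, pick $\Delta_j$ with $\Delta_j^\top\in\Zab{n_j}{T_j}(\Phi_j)$ realizing it, and then show $\Delta\in\bar{\mathcal{D}}(\Phi)$.

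The first step shows that $\Gamma^\top$ lies in every $\Zab{n_d}{\bar T}(\Psi_j)$. Substituting the definition of $\Psi_j$ and using the block-row identity, the defining quadratic form factors as
\begin{equation*}
  \begin{bmatrix} I_{n_d} \\ \Gamma^\top \end{bmatrix}^\top \Psi_j \begin{bmatrix} I_{n_d} \\ \Gamma^\top \end{bmatrix}
  = E_j \begin{bmatrix} I_{n_j} \\ \Delta_j^\top \end{bmatrix}^\top \Phi_j \begin{bmatrix} I_{n_j} \\ \Delta_j^\top \end{bmatrix} E_j^\top,
\end{equation*}
because $\begin{bmatrix} E_j & 0 \\ 0 & U_j \end{bmatrix}^\top \begin{bmatrix} I_{n_d} \\ \Gamma^\top \end{bmatrix} = \begin{bmatrix} I_{n_j} \\ \Delta_j^\top \end{bmatrix} E_j^\top$. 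The inner factor is positive semidefinite by $\Delta_j^\top\in\Zab{n_j}{T_j}(\Phi_j)$, and congruence by $E_j^\top$ preserves this, so $\Gamma^\top\in\Zab{n_d}{\bar T}(\Psi_j)$ for every $j$, i.e. $\Gamma^\top\in\bigcap_{j}\Zab{n_d}{\bar T}(\Psi_j)$.

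The second step invokes the hypothesis \eqref{sum:lem_stack} to conclude $\Gamma^\top\in\Zab{n_d}{\bar T}(\Psi)$. The third step reads this back as a statement about $\Delta$: using $\begin{bmatrix} I_{n_d} & 0 \\ 0 & U \end{bmatrix}^\top\begin{bmatrix} I_{n_d} \\ \Gamma^\top \end{bmatrix} = \begin{bmatrix} I_{n_d} \\ (\Gamma U)^\top \end{bmatrix} = \begin{bmatrix} I_{n_d} \\ \Delta^\top \end{bmatrix}$, the defining form of $\Psi$ at $\Gamma^\top$ equals $\begin{bmatrix} I_{n_d} \\ \Delta^\top \end{bmatrix}^\top \Phi \begin{bmatrix} I_{n_d} \\ \Delta^\top \end{bmatrix}$, whose positive semidefiniteness is exactly $\Delta^\top\in\Zab{n_d}{T}(\Phi)$, i.e. $\Delta\in\bar{\mathcal{D}}(\Phi)$. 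As $\Delta$ was arbitrary, $\mathcal{D}_{\rm str}\subseteq\bar{\mathcal{D}}(\Phi)$ follows.

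The argument is mostly bookkeeping once the lifting $\Delta=\Gamma U$ is in place; the one genuinely delicate point is that the reduction is only one-directional. The congruence $M\mapsto E_jME_j^\top$ carries a feasible $\Delta_j$ to a feasible $\Gamma$, but not conversely unless $E_j$ has full column rank, which is why \eqref{sum:lem_stack} yields only a sufficient condition for $\mathcal{D}_{\rm str}\subseteq\bar{\mathcal{D}}(\Phi)$ rather than an equivalence. Care is also needed to keep the two selector roles of $U_j$ straight—as a column selector in $\Gamma U_j=E_j\Delta_j$ and, transposed, as the block-row extractor $U_j^\top\Gamma^\top=\Delta_j^\top E_j^\top$.
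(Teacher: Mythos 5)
Your proof is correct and follows essentially the same route as the paper's: lift to the concatenated matrix $\Gamma$, show that $\Gamma^\top$ satisfies each QMI defined by $\Psi_j$, invoke the hypothesized inclusion \eqref{sum:lem_stack}, and read the conclusion back through $\Delta=\Gamma U$. The only cosmetic difference is that the paper obtains the first step by citing its appendix result on images of QMI solution sets under linear maps (introducing the intermediate matrix $\bar{\Phi}_j=\sqmat{E_j}{0}{0}{I}\Phi_j\sqmat{E_j}{0}{0}{I}^\top$), whereas you carry out the same congruence computation directly, which is, if anything, slightly more self-contained.
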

\begin{proof}
    $\mathcal{D}_{\rm str}\subseteq \bar{\mathcal{D}}(\Phi)$ holds if and only if
    $\Delta\coloneqq\sum_{j=1}^{J}E_j\Delta_j F_j$ satisfies $\Delta^\top\in\Zab{n_d}{T}(\Phi)$
    for any $\Delta_1,\dots,\Delta_J$ such that $\Delta_j\in\Zab{n_j}{T_j}(\Phi_j), j=1,\dots,J$.

    Assume that $\Delta_1,\dots,\Delta_J$ satisfy $\Delta_j^\top\in\Zab{n_j}{T_j}(\Phi_j)$ for each $j$.
    Proposition~\ref{prop:lin_eq_qmi} in Appendix implies $(E_j\Delta_j)^\top\in\Zab{n_d}{T_j}(\bar{\Phi}_j)$ from $\Delta_j^\top\in\Zab{n_j}{T_j}(\Phi_j)$, where
    \begin{equation}
        \bar{\Phi}_j\coloneqq \sqmat{E_j}{0}{0}{I}\Phi_j\sqmat{E_j}{0}{0}{I}^\top.
    \end{equation}
    Since $\Gamma U_j =E_j\Delta_j$, it follows that $(\Gamma U_j)^\top\in\Zab{n}{T_j}(\bar{\Phi}_j)$, i.e.,
    \begin{equation}
        \matvec{I}{(\Gamma U_j)^\top}^\top\bar{\Phi}_j\matvec{I}{(\Gamma U_j)^\top}=\matvec{I}{\Gamma^\top}^\top\Psi_j\matvec{I}{\Gamma^\top}\succeq 0.
    \end{equation}
    Then, we have
        $\Gamma^\top\in\Zab{n_d}{\sum_{j=1}^JT_j}(\Psi_j)$.
    Since this equation is satisfied for every $j$, it follows that
    \begin{equation}
        \textstyle{
        \Gamma^\top\in\bigcap_{1\leq j\leq J}\Zab{n_d}{\sum_{j=1}^JT_j}(\Psi_j).
        }
    \end{equation}
    From \eqref{sum:lem_stack}, $\Gamma^\top\in\Zab{n_d}{\sum_{j=1}^JT_j}(\Psi)$ also holds.
    By using $\Gamma U = \Delta$, we transform $\Gamma^\top\in\Zab{n_d}{\sum_{j=1}^JT_j}(\Psi)$ as
    \begin{equation}
        \matvec{I}{\Gamma^\top}^\top\Psi\matvec{I}{\Gamma^\top}=\matvec{I}{\Delta^\top}^\top\Phi\matvec{I}{\Delta^\top}\succeq 0.
    \end{equation}
    Therefore, we obtain $\Delta\in\bar{\mathcal{D}}(\Phi)$.
\end{proof}

We obtain the following theorem by applying the (conservative) matrix S-procedure to multiple QMIs (Proposition~\ref{prop:Slem_lossy} in Appendix).
\begin{theorem}\label{thm:approximation}
    Let Assumption~\ref{ass:noiseset_sum} hold.
    The condition $\mathcal{D}_{\rm str}\subseteq \bar{\mathcal{D}}(\Phi)$ holds if there exist $\alpha_j\geq 0,\ j=1,\dots,J$ such that
    \begin{equation}\label{sum:lmi_Phi}
        \sqmat{I}{0}{0}{U}\Phi\sqmat{I}{0}{0}{U}^\top -\sum_{j=1}^{J}\alpha_j \sqmat{E_j}{0}{0}{U_j}\Phi_j\sqmat{E_j}{0}{0}{U_j}^\top\succeq 0.
    \end{equation}
\end{theorem}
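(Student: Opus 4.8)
The plan is to chain together three sufficiency results so that the LMI~\eqref{sum:lmi_Phi} ultimately forces $\mathcal{D}_{\rm str}\subseteq\bar{\mathcal{D}}(\Phi)$. Lemma~\ref{lem:stack} has already reduced the desired set inclusion to the QMI inclusion~\eqref{sum:lem_stack}, namely $\bigcap_{1\leq j\leq J}\Zab{n_d}{\sum_{j=1}^JT_j}(\Psi_j)\subseteq\Zab{n_d}{\sum_{j=1}^JT_j}(\Psi)$. Hence it suffices to derive this inclusion from~\eqref{sum:lmi_Phi}, after which Lemma~\ref{lem:stack} closes the argument.

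First I would note that, reading off the definitions of $\Psi$ and $\Psi_j$ supplied in Lemma~\ref{lem:stack}, the matrix on the left-hand side of~\eqref{sum:lmi_Phi} is exactly $\Psi-\sum_{j=1}^{J}\alpha_j\Psi_j$. Thus the hypothesis of the theorem is precisely the existence of scalars $\alpha_j\geq 0$ with $\Psi-\sum_{j=1}^{J}\alpha_j\Psi_j\succeq 0$. Next I would invoke the conservative (one-directional) matrix S-procedure for multiple QMIs, Proposition~\ref{prop:Slem_lossy} in the Appendix, applied with $q=n_d$ and $r=\sum_{j=1}^{J}T_j$, taking the constraint matrices to be $\Psi_1,\dots,\Psi_J$ and the target matrix to be $\Psi$. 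That proposition states that the existence of such nonnegative multipliers is sufficient for~\eqref{sum:lem_stack}; the underlying mechanism is the elementary estimate that, for any $Z$ lying in every $\Zab{n_d}{\sum_jT_j}(\Psi_j)$, multiplying each defining inequality by $\alpha_j\geq 0$ and summing yields $\matvec{I_{n_d}}{Z}^\top\Psi\matvec{I_{n_d}}{Z}\succeq\sum_{j=1}^{J}\alpha_j\matvec{I_{n_d}}{Z}^\top\Psi_j\matvec{I_{n_d}}{Z}\succeq 0$. This is exactly~\eqref{sum:lem_stack}, and Lemma~\ref{lem:stack} then delivers $\mathcal{D}_{\rm str}\subseteq\bar{\mathcal{D}}(\Phi)$.

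Because the genuinely nontrivial work---stacking the $J$ perturbations into the single matrix $\Gamma$ and rewriting each $\Delta_j^\top\in\Zab{n_j}{T_j}(\Phi_j)$ as a QMI in $\Gamma^\top$---is already carried out in Lemma~\ref{lem:stack}, I anticipate no serious obstacle here: the proof is essentially a matter of matching the LMI to the S-procedure inequality. The only point requiring care is that, for more than one constraint, only the sufficient direction of the S-procedure is available unconditionally, which is why the theorem asserts an ``if'' rather than an equivalence; correspondingly, no convexity, boundedness, or nonemptiness hypotheses on the individual solution sets $\Zab{n_j}{T_j}(\Phi_j)$ enter the argument.
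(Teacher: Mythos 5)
Your proposal is correct and follows essentially the same route as the paper's proof: identify the left-hand side of \eqref{sum:lmi_Phi} as $\Psi-\sum_{j=1}^{J}\alpha_j\Psi_j$, apply Proposition~\ref{prop:Slem_lossy} to obtain the inclusion \eqref{sum:lem_stack}, and conclude via Lemma~\ref{lem:stack}. Your added remarks (the elementary summing mechanism behind the S-procedure and why only the sufficient direction is available) are accurate but not needed beyond what the paper does.
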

\begin{proof}
    By using the matrices defined in Lemma~\ref{lem:stack}, \eqref{sum:lmi_Phi} is equivalent to
    $\Psi-\sum_{j=1}^{J}\Psi_j\succeq 0.$
    By applying Proposition~\ref{prop:Slem_lossy}, it follows that \eqref{sum:lem_stack}.
    Therefore, we obtain $\mathcal{D}_{\rm str}\subseteq \bar{\mathcal{D}}(\Phi)$ from Lemma~\ref{lem:stack}.
\end{proof}

Theorem~\ref{thm:approximation} establishes an LMI condition for constructing an outer approximation of the structured perturbation set $\mathcal{D}_{\rm str}$.
Note that the feasible set of~\eqref{sum:lmi_Phi} includes a trivial approximation.
In particular, choosing $\Phi\succeq 0$ and $\alpha_j=0,\ j=1,\dots,J$ satisfies the LMI but results in $\bar{\mathcal{D}}(\Phi)$ encompassing the entire space, thereby rendering the approximation vacuous.
Therefore, to obtain a meaningful and informative outer approximation, one must carefully select nontrivial feasible solutions to the LMI.


\subsection{Co-design of Outer QMI Approximation and Stabilizing Controller}
We derive a sufficient condition of data informativity for the quadratic stabilization via state feedback under the structured perturbation.
Let $n_d=2n+m$ and data $\mathbf{X}=[X_+^\top\ -X^\top\ -U^\top]^\top$ compromised by data perturbation $\Delta\in\mathcal{D}_{\rm str}$ satisfy
\begin{equation}\label{sum:stateeq}
  [I\ A\ B]\mathbf{X}=[I\ A\ B]\Delta.
\end{equation}
Then, the set of systems consistent with the data is given by
$\Sigma=\left\{(A,B)\middle|\exists \Delta\in\mathcal{D}_{\rm str}\st \eqref{sum:stateeq}\right\}.$
Take a matrix $\Phi\inX{S}{(2n+m)+T}$ feasible in~\eqref{sum:lmi_Phi}.
Since $\mathcal{D}_{\rm str}\subseteq \bar{\mathcal{D}}(\Phi)$ from Theorem~\ref{thm:approximation}, we have
$\Sigma\subseteq \Sigma(\Phi)\coloneqq \left\{(A,B)\middle|\Delta\in\bar{\mathcal{D}}(\Phi)\st \eqref{sum:stateeq}\right\}.$
Therefore, quadratic stabilization for $\Sigma(\Phi)$ is sufficient that for $\Sigma$.
Note that the QMI characterization of $\Sigma(\Phi)$ is not straightforward from the discussion in~\ref{subsec:qstab_sysset} because we do not assume $\Phi\in\mathbf{\Pi}_{2n+m,T}$.
Instead, we provide the following alternative lemma.
\begin{lemma}
    Let
    $\SigmaR\coloneqq \left\{(A,B)\middle|[A\ B]^\top\in\Zab{n}{n+m}(N)\right\},$
    where
    $N\coloneqq[I_{2n+m}\ \mathbf{X}]\Phi[I_{2n+m}\ \mathbf{X}]^\top.$
    Then $\Sigma(\Phi)\subseteq\SigmaR$ holds.
\end{lemma}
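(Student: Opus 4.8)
The plan is to prove the inclusion by a direct congruence argument, exploiting only the consistency equation \eqref{sum:stateeq} together with the QMI satisfied by $\Delta$. Unlike Lemma~\ref{lem:sys_set}, no appeal to the explicit parametrization \eqref{eq:explicit_hDel} is needed, because here I am establishing only the single direction $\Sigma(\Phi)\subseteq\SigmaR$ and do \emph{not} assume $\Phi\in\mathbf{\Pi}_{2n+m,T}$.

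First I would fix an arbitrary $(A,B)\in\Sigma(\Phi)$ and abbreviate $S\coloneqq[I\ A\ B]$. By definition of $\Sigma(\Phi)$ there exists $\Delta\in\bar{\mathcal{D}}(\Phi)$, i.e.\ with $\Delta^\top\in\Zab{2n+m}{T}(\Phi)$, satisfying $S\mathbf{X}=S\Delta$. The target membership $[A\ B]^\top\in\Zab{n}{n+m}(N)$ is, by the definition \eqref{qmi:dfn} of the QMI, exactly the condition $SNS^\top\succeq 0$, since $\begin{bmatrix}I_n\\ [A\ B]^\top\end{bmatrix}=S^\top$.

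Next I would expand $N$ and substitute the consistency relation. Using $N=[I_{2n+m}\ \mathbf{X}]\Phi[I_{2n+m}\ \mathbf{X}]^\top$ and $S[I_{2n+m}\ \mathbf{X}]=[S\ S\mathbf{X}]$, the equation $S\mathbf{X}=S\Delta$ yields
\begin{align*}
SNS^\top &=[S\ S\mathbf{X}]\Phi[S\ S\mathbf{X}]^\top=[S\ S\Delta]\Phi[S\ S\Delta]^\top\\
&=S\,\big([I_{2n+m}\ \Delta]\Phi[I_{2n+m}\ \Delta]^\top\big)\,S^\top.
\end{align*}
The bracketed inner factor equals $\begin{bmatrix}I_{2n+m}\\ \Delta^\top\end{bmatrix}^\top\Phi\begin{bmatrix}I_{2n+m}\\ \Delta^\top\end{bmatrix}$, which is positive semidefinite precisely because $\Delta^\top\in\Zab{2n+m}{T}(\Phi)$. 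Hence $SNS^\top$ is a congruence of a positive semidefinite matrix and is therefore itself positive semidefinite, so $(A,B)\in\SigmaR$, completing the inclusion.

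There is no genuine obstacle in this direction: the whole argument collapses to a one-line congruence once the consistency equation is used to replace $S\mathbf{X}$ by $S\Delta$. The only points needing care are the block-dimension bookkeeping (that $S^\top=\begin{bmatrix}I_n\\ [A\ B]^\top\end{bmatrix}$, and that the inner factor after substitution is the QMI for $\Delta$ rather than for $\mathbf{X}$). The genuinely hard direction, namely the reverse inclusion $\SigmaR\subseteq\Sigma(\Phi)$, is intentionally \emph{not} claimed here, since it would require $\Phi\in\mathbf{\Pi}_{2n+m,T}$ together with the $\SigmaN$-type analysis of Theorem~\ref{thm:sys_set}, neither of which is available for a general feasible $\Phi$ obtained from \eqref{sum:lmi_Phi}; this is why the outer approximation only furnishes a sufficient, and in general conservative, characterization of $\Sigma$.
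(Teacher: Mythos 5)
Your proof is correct and follows essentially the same route as the paper's: the paper's own argument is exactly this substitution of the consistency relation $S\mathbf{X}=S\Delta$ into the definition of $N$, stated tersely as ``from the definition of $N$, we obtain $[A\ B]^\top\in\Zab{n}{n+m}(N)$.'' Your version merely spells out the congruence computation that the paper leaves implicit, which is a faithful (and arguably clearer) rendering of the same idea.
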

\begin{proof}
    Assume that $(A,B)\in \Sigma(\Phi)$.
    Then, there exists $\Delta$ such that $\Delta^\top \in\Zab{2n+m}{T}(\Phi)$ and \eqref{sum:stateeq}.
    From the definition of $N$, we obtain $[A\ B]^\top\in\Zab{n}{n+m}(N)$.
\end{proof}

Since the system set $\SigmaR$ is characterized by the QMI, we can apply Theorem~\ref{thm:Slem_new}.
After Schur complement with change of variables, we obtain the following sufficient condition
\begin{equation}\label{sum:lmi_qstab}
  \mathcal{M}(P,L,\beta)-\alpha\sqmat{N}{0}{0}{0_n}\succeq 0
\end{equation}
for quadratic stabilization with the stabilizing controller $K=LP^{-1}$.
The remaining problem is the selection of a suitable matrix $\Phi$ as the outer QMI approximation from the feasible solutions of \eqref{sum:lmi_Phi}.

We propose a co-design approach for simultaneously determining the outer approximation matrix $\Phi$ and a stabilizing controller $K$.
This co-design problem is formulated as follows.
\begin{equation}\label{sum:BMI_dinfo}
  \begin{split}
    \text{find }&\Phi,\alpha_1,\dots,\alpha_J,P,L,\alpha,\beta,\\
    \st&\alpha_j\geq 0,P\succ 0,\alpha\geq 0,\beta>0,\eqref{sum:lmi_Phi},\eqref{sum:lmi_qstab}.
  \end{split}
\end{equation}
The problem~\eqref{sum:BMI_dinfo} poses a significant computational challenge due to the presence of a bilinear matrix inequality (BMI), arising from the product of the decision variables $\alpha$ and $\Phi$ in~\eqref{sum:lmi_qstab}, where $N$ contains $\Phi$.
This bilinearity renders the problem NP-hard in general.
A commonly adopted heuristic is a twp-step procedure~\cite{Con:shimomura2005multiobjective,Con:Tran2012} by decoupling the problem by first selecting a fixed matrix $\Phi$ that satisfies the constraint~\eqref{sum:lmi_Phi}, and subsequently solving~\eqref{sum:lmi_qstab} with this fixed $\Phi$,
However, it is important to note that this heuristic does not guarantee global optimality of the resulting solution.

Remarkably, the optimization problem~\eqref{sum:BMI_dinfo}, despite its inherent bilinearity, can be equivalently reformulated as an optimization problem involving only LMIs.
The key observation is that the LMI~\eqref{sum:lmi_Phi} is homogeneous, which allows for normalization by setting $\alpha = 1$ without loss of generality.
\begin{theorem}\label{thm:alpha_scale}
  The feasibility of the problem \eqref{sum:BMI_dinfo} is equivalent to the feasibility of
  \begin{equation}\label{sum:LMI_dinfo}
    \begin{split}
      \text{find }&\Phi,\alpha_1,\dots,\alpha_J,P,L,\beta,\\
      \st&\alpha_j\geq 0,P\succ 0,\beta>0,\eqref{sum:lmi_Phi},\\
      &\mathcal{M}(P,L,\beta)-\sqmat{N}{0}{0}{0_n}\succeq 0.
    \end{split}
  \end{equation}
\end{theorem}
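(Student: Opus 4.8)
The plan is to establish the equivalence by converting feasible points back and forth, with the homogeneity of \eqref{sum:lmi_Phi} doing the essential work. I would first record the key structural observation: the constraint \eqref{sum:lmi_Phi} is linear, hence positively homogeneous of degree one, jointly in the decision variables $(\Phi,\alpha_1,\dots,\alpha_J)$, because the matrices $E_j, U_j, U$ and the data matrices $\Phi_j$ are all fixed. Thus scaling $(\Phi,\alpha_1,\dots,\alpha_J)\mapsto(c\Phi,c\alpha_1,\dots,c\alpha_J)$ by any $c>0$ preserves \eqref{sum:lmi_Phi}. Moreover, since $N=[I_{2n+m}\ \mathbf{X}]\Phi[I_{2n+m}\ \mathbf{X}]^\top$ is linear in $\Phi$, replacing $\Phi$ by $c\Phi$ replaces $N$ by $cN$. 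This is precisely the device that lets me absorb the scalar $\alpha$ into $\Phi$.

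For the direction \eqref{sum:LMI_dinfo} $\Rightarrow$ \eqref{sum:BMI_dinfo}, I would simply take a feasible point of \eqref{sum:LMI_dinfo} and set $\alpha=1$. Then $\alpha=1\geq 0$ meets the sign constraint, and \eqref{sum:lmi_qstab} at $\alpha=1$ coincides verbatim with the Lyapunov-type LMI of \eqref{sum:LMI_dinfo}; the remaining constraints are shared. Hence the point is feasible for \eqref{sum:BMI_dinfo}.

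For the converse, I would start from a feasible point $(\Phi,\alpha_1,\dots,\alpha_J,P,L,\alpha,\beta)$ of \eqref{sum:BMI_dinfo}, and the crux is to prove $\alpha>0$ so that rescaling by $\alpha$ is legitimate. I expect to rule out $\alpha=0$ by inspecting $\mathcal{M}(P,L,\beta)$ in \eqref{dfn_MPLbeta}: its second diagonal block is $-P$, so at $\alpha=0$ the constraint \eqref{sum:lmi_qstab} reads $\mathcal{M}(P,L,\beta)\succeq 0$, and testing this against a vector $v$ supported on the second block gives $v^\top\mathcal{M}(P,L,\beta)v=-w^\top P w<0$ for $w\neq 0$, contradicting $P\succ 0$. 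Therefore $\alpha>0$. Then I would set $\Phi'\coloneqq\alpha\Phi$ and $\alpha_j'\coloneqq\alpha\alpha_j$; homogeneity keeps \eqref{sum:lmi_Phi} satisfied, and the induced $N'=\alpha N$ turns the bilinear constraint \eqref{sum:lmi_qstab} into $\mathcal{M}(P,L,\beta)-\sqmat{N'}{0}{0}{0_n}\succeq 0$, which is exactly the LMI of \eqref{sum:LMI_dinfo}. Hence $(\Phi',\alpha_1',\dots,\alpha_J',P,L,\beta)$ is feasible for \eqref{sum:LMI_dinfo}.

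The main obstacle is the non-obvious step of excluding $\alpha=0$; everything else is bookkeeping. I expect this to rest entirely on the sign structure of $\mathcal{M}(P,L,\beta)$ together with $P\succ 0$, and I would double-check that the relevant diagonal block is genuinely the strictly negative $-P$ block rather than one masked by the $N$-term — and indeed at $\alpha=0$ that term vanishes, so no masking occurs. A minor point to verify is that $P\succ 0$ and $\beta>0$ carry over unchanged under both conversions, which they do since neither is affected by the scaling.
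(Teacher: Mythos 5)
Your proposal is correct and follows essentially the same route as the paper: the forward direction sets $\alpha=1$, and the converse absorbs $\alpha$ into $(\Phi,\alpha_1,\dots,\alpha_J)$ via the homogeneity of \eqref{sum:lmi_Phi}. Your only deviation is the step excluding $\alpha=0$, which is correct in itself but unnecessary: the rescaling $\Phi'=\alpha\Phi$, $\alpha_j'=\alpha\alpha_j$ turns the bilinear constraint \eqref{sum:lmi_qstab} verbatim into the LMI of \eqref{sum:LMI_dinfo} even when $\alpha=0$, which is exactly how the paper argues, so that step can simply be dropped.
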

\begin{proof}
  If the problem \eqref{sum:LMI_dinfo} is feasible, we can obtain a feasible solution of \eqref{sum:BMI_dinfo} by adding $\alpha=1$ to the solution of \eqref{sum:LMI_dinfo}.
  Conversely, Assume that \eqref{sum:BMI_dinfo} is feasible and that $(\Phi,\alpha_1,\dots,\alpha_J,P,L,\alpha,\beta)$ is a solution of the problem.
  Then, $(\alpha\Phi,\alpha\alpha_1,\dots,\alpha\alpha_J,P,L,\beta)$ is a feasible solution of \eqref{sum:LMI_dinfo} because $(\alpha\Phi,\alpha\alpha_1,\dots,\alpha\alpha_J)$ is a solution of \eqref{sum:lmi_Phi} due to its homogeneity.
  Note that $\alpha\alpha_j\geq 0$ holds from $\alpha\geq0$ and $\alpha_j\geq0$.
\end{proof}

Theorem~\ref{thm:alpha_scale} directly leads to a sufficient LMI condition for data informativity under structured data perturbations.
\begin{corollary}
    Suppose that the data $(X_+,X,U)$ and the data perturbation under Assumption~\ref{ass:noiseset} satisfy~\eqref{sum:stateeq}.
    Then, the data is informative for quadratic stabilization if \eqref{sum:LMI_dinfo} is feasible.
    Moreover, if \eqref{sum:LMI_dinfo} is feasible, the controller $K=LP^{-1}\inX{R}{m\times n}$ stabilizes any system in $\Sigma$.
\end{corollary}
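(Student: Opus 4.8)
The plan is to read~\eqref{sum:LMI_dinfo} directly through the inclusion chain built in this section, using Theorem~\ref{thm:Slem_new} for the final step and Theorem~\ref{thm:alpha_scale} to justify the $\alpha=1$ normalization. First I would assume~\eqref{sum:LMI_dinfo} is feasible and fix a solution $(\Phi,\alpha_1,\dots,\alpha_J,P,L,\beta)$. By construction it satisfies the approximation LMI~\eqref{sum:lmi_Phi} together with $\mathcal{M}(P,L,\beta)-\sqmat{N}{0}{0}{0_n}\succeq 0$, i.e.~\eqref{sum:lmi_qstab} at $\alpha=1$; Theorem~\ref{thm:alpha_scale} certifies that fixing $\alpha=1$ entails no loss of generality relative to the bilinear co-design~\eqref{sum:BMI_dinfo}, which is what makes this an honest LMI condition.

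Next I would propagate the guarantees down the inclusion chain. Because $(\Phi,\alpha_1,\dots,\alpha_J)$ satisfies~\eqref{sum:lmi_Phi}, Theorem~\ref{thm:approximation} gives $\mathcal{D}_{\rm str}\subseteq\bar{\mathcal{D}}(\Phi)$. Substituting this into the definitions of $\Sigma$ and $\Sigma(\Phi)$ yields $\Sigma\subseteq\Sigma(\Phi)$, and the lemma establishing $\Sigma(\Phi)\subseteq\SigmaR$ gives the second inclusion, with $N=[I_{2n+m}\ \mathbf{X}]\Phi[I_{2n+m}\ \mathbf{X}]^\top$. Hence $\Sigma\subseteq\SigmaR=\{(A,B)\mid [A\ B]^\top\in\Zab{n}{n+m}(N)\}$, so it suffices to quadratically stabilize every member of the QMI-characterized set $\SigmaR$.

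Finally I would read~\eqref{sum:lmi_qstab} as a QMI inclusion. Setting $K=LP^{-1}$ and letting $M$ be as in~\eqref{dfn_M_qstab}, reversing the Schur complement and the substitution $L=KP$ exactly as in the proof of Theorem~\ref{thm:qstab} shows that~\eqref{sum:lmi_qstab} with $\alpha=1$ is $M-N\succeq\sqmat{\beta I}{0}{0}{0}$. To apply Theorem~\ref{thm:Slem_new} I would check its hypotheses: $P\succ 0$ ensures $M_{22}\preceq 0$ and $\ker M_{22}\subseteq\ker M_{12}$, while $\Zab{n}{n+m}(N)=\SigmaR$ is nonempty since $(A^*,B^*)\in\Sigma\subseteq\SigmaR$. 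The theorem then gives $\Zab{n}{n+m}(N)\subseteq\Zab{n}{n+m}^+(M)$, i.e. every $(A,B)\in\SigmaR$ satisfies the strict QMI~\eqref{qmi_qstab}, which is equivalent to the Lyapunov inequality~\eqref{qstab} for this common $P$ and $K$. Since $\Sigma\subseteq\SigmaR$, the inequality holds for all $(A,B)\in\Sigma$, so by Definition~\ref{dfn:datainfo_qstab} the data is informative for quadratic stabilization and $K=LP^{-1}$ stabilizes every system in $\Sigma$. The only substantive ingredient is Theorem~\ref{thm:alpha_scale}, whose homogeneity argument removes the $\alpha\Phi$ bilinearity; the remaining work is bookkeeping, and the main care needed is to confirm that the preconditions of Theorem~\ref{thm:Slem_new} survive the change of variables and that nonemptiness of $\SigmaR$ is inherited from $\Sigma$.
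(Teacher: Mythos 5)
Your proof is correct and follows essentially the same route the paper leaves implicit: Theorem~\ref{thm:approximation} for the outer approximation, the inclusion chain $\Sigma\subseteq\Sigma(\Phi)\subseteq\SigmaR$, and the ``if'' direction of Theorem~\ref{thm:Slem_new} applied to \eqref{sum:lmi_qstab} with $\alpha=1$ (after reversing the Schur complement and the substitution $L=KP$). The only nuance is that Theorem~\ref{thm:alpha_scale} is not logically needed for this sufficiency direction---a solution of \eqref{sum:LMI_dinfo} already serves directly as a certificate with $\alpha=1$, and the homogeneity argument only matters for the converse (non-conservativeness relative to the BMI \eqref{sum:BMI_dinfo})---so its role in your argument is contextual rather than essential, exactly as in the paper.
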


By following similar procedures, we can also derive LMI-based sufficient conditions for data informativity regarding optimal control and quadratic stabilization of AR models under Assumption~\ref{ass:noiseset_sum}.
In Sec.~\ref{subsec:exp_codesign}, we demonstrate that our LMI-based co-design approach outperforms the two-step heuristic approach.

\emph{Remark:}
Since~\eqref{sum:lmi_qstab} is also homogeneous, its feasibility remains unchanged when $\alpha$ is fixed to $\alpha=1$~\cite[Theorem 5.1]{DDCQMI:Waarde2023_siam_qmi}.
As a result, the LMI characterization~\eqref{sum:LMI_dinfo} for quadratic stabilization remains valid without explicitly exploiting the homogeneity of the outer approximation condition~\eqref{sum:lmi_Phi}, as performed in the proof of Theorem~\ref{thm:alpha_scale}.
However, LMI characterizations corresponding to other control objectives, such as optimal control, do not necessarily exhibit this homogeneity property.
Therefore, our equivalent transformation to LMIs is particularly valuable for addressing such general cases.

\emph{Remark:}
Assumption~\ref{ass:noiseset_sum} can represent instantaneous bounds on disturbances addressed in~\cite{DDCQMI:BISOFFI2021,DDCQMI:BISOFFI2024_CSL}.
Although our approach differs from the aforementioned works, both yield equivalent LMIs as demonstrated below.
The work~\cite{DDCQMI:BISOFFI2021} considers a system exposed to exogenous disturbance
\begin{equation}\label{sum:stateeq_dist}
    x(t+1)=Ax(t)+Bu(t)+d(t),
\end{equation}
where the exogenous disturbance signal at each time step satisfies $d(t)^\top\in\Zab{n}{1}(\Phi_{\rm inst})$ with {\small$\Phi_{\rm inst}=\sqmat{\epsilon I}{0}{0}{-1}$}.
The set of systems consistent with the $T$-long data $(X_+,X,U)$ is given by $\Sigma_{\rm inst} \coloneqq \cap_{t=0}^{T-1}\mathcal{C}_t$ with
\[
\mathcal{C}_t\coloneqq \{(A,B)|\exists d(t)\inX{R}{n}\st d(t)^\top\in\Zab{n}{1}(\Phi_{\rm inst}),\eqref{sum:stateeq_dist}\}.
\]
Each set $\mathcal{C}_t$ corresponds to the solution set of a QMI involving system matrices.
Applying Proposition~\ref{prop:Slem_lossy} results in a sufficient condition for quadratic stabilization~\cite[Proposition 1]{DDCQMI:BISOFFI2021}.
On the other hand, out approach treats the $T$-long disturbance matrix as the data perturbation in the form of
\begin{equation}
    \Delta\in\mathcal{D}_{\rm inst}\coloneqq \left\{E_{d}[d(0)\ \cdots\ d(T-1)] \middle| d(t)^\top\in\Zab{n}{1}(\Phi_{\rm inst})\right\},
\end{equation}
which satisfies Assumption~\ref{ass:noiseset_sum} by setting $E_t=E_d\coloneqq [I_n\ 0_{n,n+m}]^\top$ and $F_t=\univec{t+1}{T}^\top$.
Under these conditions, the LMI~\eqref{sum:lmi_Phi} in Theorem~\ref{thm:approximation} simplifies to
\begin{align}\label{sum:hikaku}
    \Phi -\sum_{j=1}^{J}\alpha_j \sqmat{E_{d}}{0}{0}{U_j}\Phi_{\rm inst}\sqmat{E_{d}}{0}{0}{U_j}^\top\succeq 0.
\end{align}
By substituting~\eqref{sum:hikaku} into~\eqref{sum:LMI_dinfo}, the variable $\Phi$ can be eliminated, resulting in a condition equivalent to the condition in~\cite[Proposition 1]{DDCQMI:BISOFFI2021}.
Therefore, our method can provide an alternative approach for the problem in~\cite{DDCQMI:BISOFFI2021}.
Also, our formulation accommodates broader classes of noise models beyond those considered in~\cite{DDCQMI:BISOFFI2021}, because the consistent system set is not generally described as $\Sigma_{\rm inst}=\cap_{t=0}^{T-1}\mathcal{C}_t$ under the general perturbation structure imposed by Assumption~\ref{ass:noiseset_sum}.


\section{Numerical example}\label{sec:exam}
This section presents numerical examples to illustrate the theoretical findings.
In all simulations, the clean dataset is generated as follows: the elements of the data matrices $X$ and $U$ are independently sampled from the standard normal distribution.
The matrix $X_+$ is then generated using the true system dynamics.
Data perturbations are sampled from a uniform distribution over the prescribed perturbation set $\mathcal{D}$ or $\mathcal{D}_{\rm str}$, depending on the scenario.
We employ the Metropolis algorithm~\cite{robert2004monte} to sample perturbations from the uniform distribution over these nontrivial sets.

\subsection{Visualization of Quadratic Stabilization}\label{subsec:1d_qstab}
We visualize the quadratic stabilization via state feedback, discussed in Sec.~\ref{sec:quadratic_stabilization}, using a one-dimensional linear system.
We set the unknown true system as $(A^*,B^*)=(1.2,0.6)$ and collect data of length $T=20$.
Also, we set the perturbation set $\mathcal{D}$ in Assumption~\ref{ass:noiseset} to be
\begin{equation}\label{exp:noiseset}
    \mathcal{D}=\left\{\Delta\middle|\Delta^\top\in\Zab{2n+m}{T}\left(\sqmat{\epsilon^2TI_{2n+m}}{0}{0}{-I_T}\right)\right\},
\end{equation}
with $\epsilon=0.3$.
This setup implies that the data is perturbed by measurement noise.
Then, the LMI \eqref{LMI_result} is feasible, and the resulting variables $P$ and $L$ lead to a stabilizing controller $K=LP^{-1}$.
Fig.~\ref{fig:1d_example} illustrates the system set $\Sigma$ including the true system and all systems stabilized by the controller depicted by the dotted ellipse and by the dashed band-shaped region, respectively.
We observe that all systems in $\Sigma$ lie within the stabilizing region, confirming that the controller stabilizes any system in $\Sigma$.

\begin{figure}[t]
  \centerline{\includegraphics[width=\columnwidth]{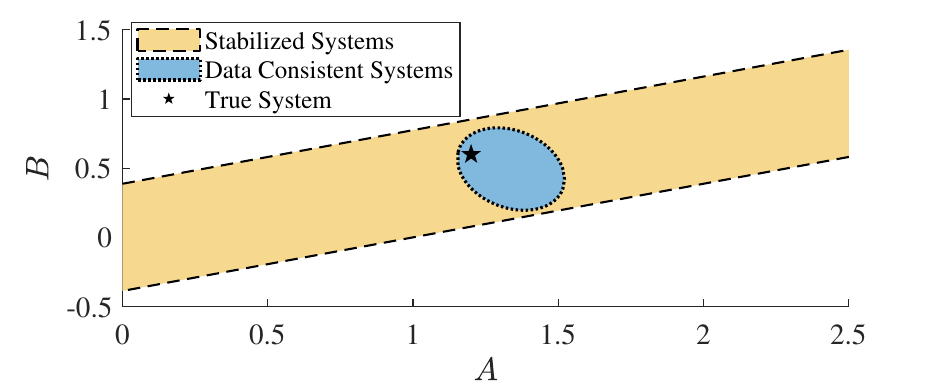}}
  \caption{The controller $K$ stabilizes all system in $\Sigma$.}
  \label{fig:1d_example}
\end{figure}

\subsection{Example with Unbounded System Set}\label{subsec:muri}
In Sec.~\ref{subsec:unbounded}, we have discussed possibility of designing a controller for unbounded system set, which corresponds to rank-deficient data matrices.
We set the unknown true system as
\begin{equation}
{\small
  A^*=\begin{bmatrix}
    -0.143&-0.561& 1.559\\
     0.140& 0.989&-0.693\\
    -0.891&-0.320& 1.354
  \end{bmatrix},
  B^*=\begin{bmatrix}
     2.769& 0.725\\
    -1.350&-0.063\\
     3.035&0.715
  \end{bmatrix},}
\end{equation}
and collect data of length $T=4$.
Note that the data $[X^\top\ U^\top]^\top$ does not have full row rank due to $T<n+m$.
We also set the matrices in Assumption~\ref{ass:noiseset} to be $E=[I_{2n}\ 0_{2n,m}]^\top$ and {\small$\hPhi = \sqmat{\hat{\Theta}}{0}{0}{-I_T}$} with $\hat{\Theta}=0.02^2TI_{2n}$.
%
To satisfy the necessary condition $N\in\mathbf{\Pi}_{n,n+m}$ in Lemma~\ref{lem:NinPi}, we impose a structure on the input data such that $U=[U_1^\top U_2^\top]^\top,U_1,U_2\inX{R}{1\times T}$.
We sample each element $X$ and $U_1$ from the standard normal distribution, while setting $U_2$ to be the zero matrix.
Then, the LMI~\eqref{LMI_result} is feasible and we obtain the stabilizing controller
\begin{equation*}
  K=\begin{bmatrix}
    0.192&0.188&-0.514\\
    0    &0    &0
  \end{bmatrix}.
\end{equation*}
The singularity of $N_{22}$ results in the second row of $K$ being zero.
It can be confirmed that the image of $K=V_{-}YP^{-1}$ is included in the image of $V_{-}$ given as a submatrix of $V$, which is spanned by orthogonal bases of $\im N_{22}$.
This result coincides with the claims presented in Sec.~\ref{subsec:unbounded}.

\subsection{Variation of Control Performance with Data Length}\label{exp:Perf_Length}
We consider the $\mathcal{H}_2$ optimal control using a linearized and discretized inverted pendulum model, described by the system matrices
\begin{equation}\label{model_ip}
  A^*=\begin{bmatrix}
  0.9844&0.046&0.0347\\
  0.397&1.0009&0.0007\\
  0.0004&0.0200&1.0000\\  
\end{bmatrix},\,B^*=\begin{bmatrix}
  0.2500\\0\\0
\end{bmatrix}.
\end{equation}
The perturbation set is set as~\eqref{exp:noiseset} with $n=3,\ m=1$ and $\epsilon=1e-3$
We evaluate the effect of data length $T$ by varying it in the range $4 \leq T \leq 1000$.
For each value of $T$, we solve the optimization problem~\eqref{opt:opt_h2} 100 times using independently sampled datasets and record the average $\mathcal{H}_2$ performance $\gamma = \sqrt{J}$.
Fig.~\ref{fig:h2_perf} shows that, as the data length $T$ increases, the $\mathcal{H}_2$ performance improves and eventually converges to a limiting value.
This trend is consistent with prior results established for systems affected by exogenous disturbances~\cite{DDCQMI:Waarde2022_TAC_origin,DDCQMI:Steentjes2022_Cont_Sys_Let_Covariance}.

\begin{figure}[t]
  \centerline{\includegraphics[width=\columnwidth]{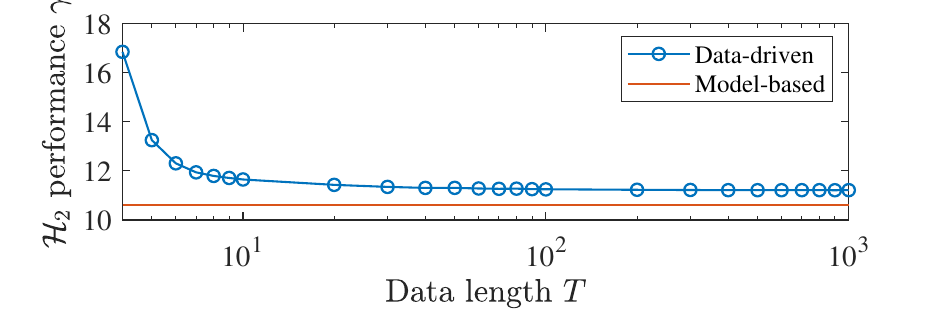}}
  \caption{$\mathcal{H}_2$ performance.}
  \label{fig:h2_perf}
\end{figure}

\subsection{Effectiveness of Co-design Approach for Structured Perturbation}\label{subsec:exp_codesign}
We verify the effectiveness of the co-design method by solving the LMI problem \eqref{sum:LMI_dinfo} by comparing it with a heuristic approach to solving the BMI problem~\eqref{sum:BMI_dinfo}.
As a heuristic approach, we adopt a two-step method: first, we compute the matrix $\Phi_{\rm app}$ that satisfies \eqref{sum:lmi_Phi} while minimizing the volume of $\bar{\mathcal{D}}(\Phi)$, i.e.,
\begin{equation}\label{exp:outerapx}
      \Phi_{\rm app}=\argmin_{\substack{\Phi\inX{S}{2n+m+T}\\\alpha_j\geq 0,j=1,\dots,J}}{\rm vol}(\bar{\mathcal{D}}(\Phi))\st\eqref{sum:lmi_Phi},
\end{equation}
where ${\rm vol}(\bar{\mathcal{D}}(\Phi))$ denotes the volume of $\bar{\mathcal{D}}(\Phi)$.
Then, the controller is designed using $\Phi_{\rm app}$ by
\begin{equation}\label{exp:feas_2step}
      \begin{split}
        \text{find }&P,L,\alpha,\beta,\\
        \st &P\succ 0,\alpha\geq 0,\beta>0,\text{\eqref{sum:lmi_qstab} with }\Phi=\Phi_{\rm app}.
      \end{split}
\end{equation}
Note that since the objective function of~\eqref{exp:outerapx} is non-convex, we instead use a sub-optimal solution by following the approach in~\cite[Sec. 5.1]{DDCQMI:BISOFFI2021}.

\subsubsection{Visualized Example}
Similar to Sec.~\ref{subsec:1d_qstab}, we visualize the set of systems with the one-dimensional system $(A^*,B^*)=(1.2,0.6)$.
We collect data $(X_+,X,U)$ under element-wise bounded noise represented as 
\begin{equation}\label{exp:noiseset_element}
 \textstyle{
  \mathcal{D}_{\rm str}=\left\{
  \sum_{i=1}^{2n+m}\sum_{j=1}^T \univec{i}{2n+m}\delta_{ij}\univec{j}{T}^\top\middle|\delta_{ij}^2\leq \epsilon^2\right\},
  }
\end{equation}
where $\epsilon=0.15$.
We apply the co-design method \eqref{sum:LMI_dinfo} and the two-step method above to the same dataset.
As a result, \eqref{sum:LMI_dinfo} is feasible while \eqref{exp:feas_2step} is infeasible for $\Phi_{\rm app}$.
Let $\Phi_{\rm feas}, L_{\rm feas}$, and $P_{\rm feas}$ be the resulting feasible solution to~\eqref{sum:LMI_dinfo} and $K_{\rm feas}=L_{\rm feas}P_{\rm feas}^{-1}$ denote the designed stabilizing controller.
Fig~\ref{fig:1d_example_sum} illustrates
$\Sigma(\Phi_{\rm feas})$, all stabilized systems by $K_{\rm feas}$, and $\Sigma(\Phi_{\rm app})$ depicted by the dotted ellipse, the dashed band-shaped region, and the solid ellipse, respectively.
We can confirm that $\Sigma(\Phi_{\rm feas})$ obtained by our co-design method is included in the stabilized region, whereas $\Sigma(\Phi_{\rm app})$ is not.
This example shows that the outer approximation with $\Phi_{\rm feas}$ is more suitable for stabilizing controller synthesis, even though the volume of $\Sigma(\Phi_{\rm feas})$ is larger than that of $\Sigma(\Phi_{\rm app})$.

\begin{figure}[t]
  \centerline{\includegraphics[width=\columnwidth]{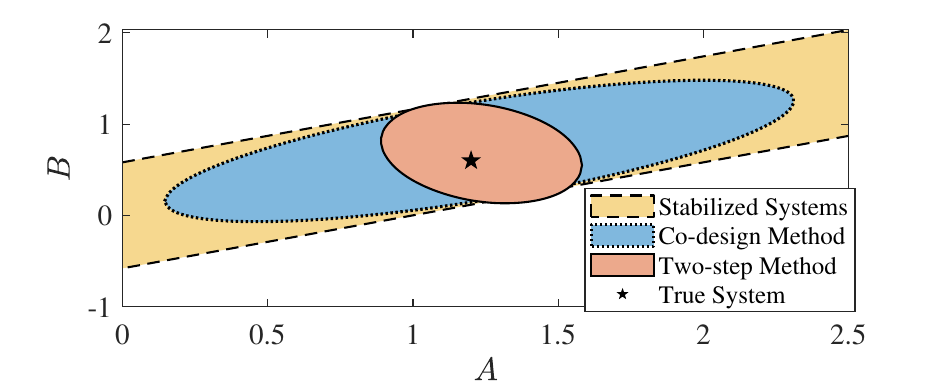}}
  \caption{Comparison of the sets of systems.}
  \label{fig:1d_example_sum}
\end{figure}

\subsubsection{Comparison of Feasibility}
We compare the feasibility rates of the two methods.
We use the inverted pendulum model in Sec.~\ref{exp:Perf_Length}, collect data of length $T=20$, and set the perturbation set as in \eqref{exp:noiseset_element} with appropriate dimensions.
For each value of $\epsilon$ in the range $2\mathrm{e}{-3}\leq\epsilon\leq 1\mathrm{e}{-2}$, we generate independent 100 datasets.
For each dataset, we evaluate the feasibility of the stabilizing controller design problem under both methods.
The proportion of feasible instances is plotted in Fig.~\ref{fig:codesign_frate}.
The results reveal a substantial difference in feasibility rates, particularly within the interval $5\mathrm{e}{-3}\leq\epsilon\leq 7.9\mathrm{e}-3$, where our proposed co-design method demonstrates over 80\% higher feasibility.
This significant improvement highlights the robustness and advantage of the proposed co-design approach.

\begin{figure}
  \centerline{\includegraphics[width=\columnwidth]{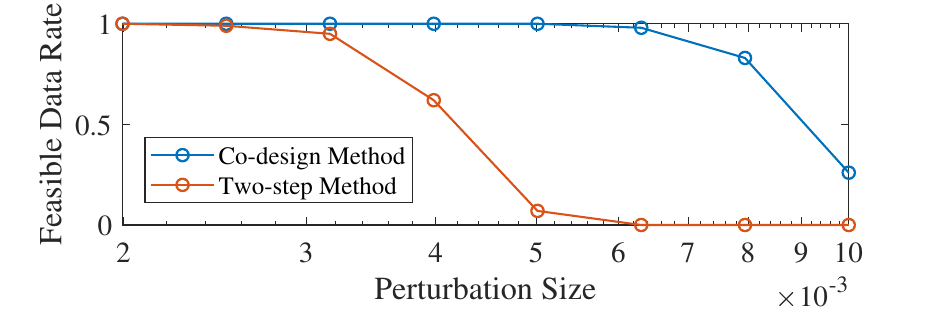}}
  \caption{Proportion of feasible datasets.}
  \label{fig:codesign_frate}
\end{figure}

\section{Conclusion}\label{sec:conclusion}
In this study, we have proposed a unified noise framework, referred to as data perturbation, that encompasses both exogenous disturbances and measurement noise, and analyzed data informativity under this framework.
We have first derived the condition equivalent to data informativity for quadratic stabilization via state feedback.
The set of systems consistent with the data has been characterized using a QMI, and an equivalent LMI has been derived through a newly proposed matrix S-procedure.
Furthermore, by applying these techniques, we have extended this analysis to optimal control and quadratic stabilization via output feedback, and derived LMIs equivalent to data informativity in these settings as well.
We have also introduced a more general noise framework that can represent specific structures in data perturbation and provided a sufficient condition for data informativity under this framework.
This generalized noise model is approximated by the solution set of a single QMI, allowing it to be linked to the aforementioned data informativity problem.
By solving the approximation and controller design problems simultaneously, less conservative control design has been achieved.
The numerical experiments have visualized and validated these theoretical results.

\appendix

We introduce a proposition which states that an image of a QMI set $\Zqr(\Pi)$ under linear maps is also a solution set of another QMI.
\begin{aproposition}[{\cite[Theorem 3.4]{DDCQMI:Waarde2023_siam_qmi}}]\label{prop:lin_eq_qmi}
    Let $\Pi\in\mathbf{\Pi}_{q,r}$ and $W\inX{R}{q\times p}$.
    Define
    ${\small \Pi_W\coloneqq \sqmat{W^\top}{0}{0}{I}\Pi\sqmat{W}{0}{0}{I}},$
    and $\mathcal{S}\coloneqq\{ZW|Z\in\Zqr(\Pi)\}$.
    Then, $\mathcal{S}\subseteq\Zab{p}{r}(\Pi_W)$ holds.
    Moreover, if $W$ has full column rank or $\Pi_{22}$ is nonsingular, $\mathcal{S}=\Zab{p}{r}(\Phi_W)$ holds.
\end{aproposition}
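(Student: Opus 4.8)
The plan is to prove the inclusion by a one-line congruence argument and then obtain equality, under either hypothesis, by lifting a point of the image set back to $\Zqr(\Pi)$ through the explicit parametrization of QMI solution sets.

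For the inclusion I would use the identity
\[
  \IqZ W=\sqmat{W}{0}{0}{I_r}\matvec{I_p}{ZW},\qquad Z\inX{R}{r\times q}.
\]
Substituting it into the quadratic form defining $\Pi_W$ gives
\[
  \matvec{I_p}{ZW}^\top\Pi_W\matvec{I_p}{ZW}
  =W^\top\!\left(\IqZ^\top\Pi\IqZ\right)\!W,
\]
and since $Z\in\Zqr(\Pi)$ makes the inner factor positive semidefinite, the congruence by $W$ keeps it so; hence $ZW\in\Zab{p}{r}(\Pi_W)$, which proves $\mathcal S\subseteq\Zab{p}{r}(\Pi_W)$ with no assumption on $W$.

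For equality I would first record that $\Pi_W\in\mathbf{\Pi}_{p,r}$: a block computation gives $(\Pi_W)_{22}=\Pi_{22}$, $(\Pi_W)_{12}=W^\top\Pi_{12}$ and $\Pi_W|(\Pi_W)_{22}=W^\top(\Pi|\Pi_{22})W$, so the three conditions \eqref{qmi:N_condition} are inherited from $\Pi\in\mathbf{\Pi}_{q,r}$ (in particular $\ker\Pi_{22}\subseteq\ker\Pi_{12}\subseteq\ker(W^\top\Pi_{12})$). Thus both sets are matrix ellipsoids and admit the explicit parametrization \cite[Theorem 3.3]{DDCQMI:Waarde2023_siam_qmi} used in \eqref{eq:explicit_hDel}. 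Writing $Q=(\Pi|\Pi_{22})^{\frac12}$, $R=(-\Pi_{22})^{\frac12}$, $Q_W=(W^\top Q^2W)^{\frac12}$ and $Z_c=-\Pi_{22}^\dagger\Pi_{21}$, the parametrization reads $Z^\top=Z_c^\top+QM_1R^\dagger+M_2(I-RR^\dagger)$ for $Z\in\Zqr(\Pi)$ with $M_1M_1^\top\preceq I_q$, and $Y^\top=W^\top Z_c^\top+Q_WM_1'R^\dagger+M_2'(I-RR^\dagger)$ for $Y\in\Zab{p}{r}(\Pi_W)$ with $M_1'M_1'^\top\preceq I_p$. Given such a $Y$, imposing $W^\top Z^\top=Y^\top$ splits into two conditions living on the complementary subspaces $\im R$ and $\ker R$ (because $R^\dagger(I-RR^\dagger)=0$): an $\mathcal R$-part, for which it suffices to solve $W^\top QM_1=Q_WM_1'$, and an $\mathcal N$-part $W^\top M_2=M_2'$.

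The $\mathcal R$-part always has a solution: from $M_1'M_1'^\top\preceq I_p$ we get $(Q_WM_1')(Q_WM_1')^\top\preceq Q_W^2=(W^\top Q)(W^\top Q)^\top$, so the factorization lemma \cite[Lemma A.1 (a)]{DDCQMI:Waarde2023_siam_qmi} yields $M_1$ with $Q_WM_1'=W^\top QM_1$ and $M_1M_1^\top\preceq I_q$, requiring nothing of $W$. I expect the $\mathcal N$-part to be the main obstacle, and it is exactly where the hypotheses enter: it is the unbounded null direction of the ellipsoid coming from $\ker\Pi_{22}$. When $W$ has full column rank, $W^\top$ has full row rank and $M_2=(W^\top)^\dagger M_2'$ solves $W^\top M_2=M_2'$; when $\Pi_{22}$ is nonsingular, $R$ is invertible, so $I-RR^\dagger=0$ and the $\mathcal N$-part is absent altogether. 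In either case the assembled $Z$ satisfies $Z\in\Zqr(\Pi)$ and $ZW=Y$, giving $Y\in\mathcal S$ and hence $\mathcal S=\Zab{p}{r}(\Pi_W)$. The lift fails precisely when $W$ cannot reach the null direction and $\Pi_{22}$ is singular, which explains why exactly these two conditions appear.
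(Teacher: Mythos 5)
Your proof is correct. Note that this paper does not actually prove Proposition~\ref{prop:lin_eq_qmi}: it is stated in the appendix as an imported result, cited from \cite[Theorem 3.4]{DDCQMI:Waarde2023_siam_qmi}, so there is no in-paper proof to compare against. That said, your argument is assembled from precisely the toolkit this paper uses in its own proofs (most visibly in Lemma~\ref{lem:sys_set}): the explicit parametrization of matrix-ellipsoid solution sets as in \eqref{eq:explicit_hDel}, and the contraction factorization \cite[Lemma A.1 (a)]{DDCQMI:Waarde2023_siam_qmi}. Every step checks out. The congruence identity gives $\mathcal{S}\subseteq\Zab{p}{r}(\Pi_W)$ with no hypothesis on $W$. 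The block identities $(\Pi_W)_{22}=\Pi_{22}$, $(\Pi_W)_{12}=W^\top\Pi_{12}$, and $\Pi_W|(\Pi_W)_{22}=W^\top(\Pi|\Pi_{22})W$ show that $\Pi_W$ inherits the three conditions \eqref{qmi:N_condition}, so both sets are matrix ellipsoids and the parametrization applies to each. The splitting of $W^\top Z^\top=Y^\top$ into an $\mathcal{R}$-part and an $\mathcal{N}$-part is legitimate because right-multiplication by the complementary projections $RR^\dagger$ and $I-RR^\dagger$ separates the two terms ($R^\dagger(I-RR^\dagger)=0$), and solving the stronger unprojected equations suffices. The bounded part is handled unconditionally by the Douglas-type factorization, since $(Q_WM_1')(Q_WM_1')^\top\preceq Q_W^2=(W^\top Q)(W^\top Q)^\top$, and the two hypotheses of the proposition enter exactly where your proof places them: full column rank of $W$ makes $W^\top M_2=M_2'$ solvable via $(W^\top)^\dagger$, while nonsingular $\Pi_{22}$ makes the $\mathcal{N}$-part vacuous. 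Your closing observation about where the lift fails also correctly explains why the equality claim needs one of these two assumptions.
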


We introduce another version of matrix S-procedure.
The following proposition is used in the proof of Theorem~\ref{thm:approximation}.
\begin{aproposition}[{\cite[Lemma 2]{DDCQMI:BISOFFI2021}}]\label{prop:Slem_lossy}
  Let $M, N_1,\dots,N_J\inX{S}{q+r}$ hold.
  Then $\bigcap_{1\leq j\leq J}\Zqr(N_j)\subseteq\Zqr(M)$ holds if there exists $\alpha_j\geq 0, j=1,\dots,J$ such that
    $M-\sum_{j=1}^{J}\alpha_j N_j\succeq 0$.
\end{aproposition}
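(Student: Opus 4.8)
The plan is to establish the sufficiency by a single congruence argument applied to the hypothesized inequality $M-\sum_{j=1}^{J}\alpha_j N_j\succeq 0$. First I would fix an arbitrary $Z\inX{R}{r\times q}$ lying in the intersection $\bigcap_{1\leq j\leq J}\Zqr(N_j)$; if the intersection is empty the inclusion holds vacuously, so this is the only case to treat. By the definition of each QMI solution set, membership in $\Zqr(N_j)$ means precisely that $\IqZ^\top N_j\IqZ\succeq 0$ for every $j=1,\dots,J$.

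Next I would left- and right-multiply the matrix inequality $M-\sum_{j=1}^{J}\alpha_j N_j\succeq 0$ by $\IqZ^\top$ and $\IqZ$, respectively. Because congruence preserves positive semidefiniteness, this yields $\IqZ^\top M\IqZ-\sum_{j=1}^{J}\alpha_j\IqZ^\top N_j\IqZ\succeq 0$, equivalently $\IqZ^\top M\IqZ\succeq\sum_{j=1}^{J}\alpha_j\IqZ^\top N_j\IqZ$. The final step is to observe that the right-hand side is a nonnegative combination of positive semidefinite matrices: each coefficient satisfies $\alpha_j\geq 0$ by assumption and each summand $\IqZ^\top N_j\IqZ$ is positive semidefinite by the previous paragraph, so the sum is positive semidefinite. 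Chaining the two relations gives $\IqZ^\top M\IqZ\succeq 0$, which is exactly the statement $Z\in\Zqr(M)$. Since $Z$ was arbitrary in the intersection, the inclusion $\bigcap_{1\leq j\leq J}\Zqr(N_j)\subseteq\Zqr(M)$ follows.

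I do not anticipate any genuine obstacle in this argument: it is the elementary ``easy direction'' of the matrix S-procedure, relying only on the facts that congruence preserves semidefiniteness and that a nonnegative linear combination of positive semidefinite matrices is again positive semidefinite. The sole conceptual point worth flagging is that this implication is \emph{lossy}: the converse fails in general for $J\geq 2$, since the matrix S-lemma is lossless only in the single-constraint setting under additional geometric hypotheses (cf.\ Proposition~\ref{prop:Slem_beta} and Theorem~\ref{thm:Slem_new}). Accordingly, no attempt is made to prove necessity, and the statement is deliberately phrased as a one-directional sufficient condition.
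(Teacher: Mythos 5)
Your proof is correct: the paper itself gives no proof of this proposition (it is imported by citation from Lemma~2 of the referenced work), and your argument is exactly the standard one-directional S-procedure computation that underlies that cited result. One cosmetic remark: since $\IqZ$ is a tall $(q+r)\times q$ matrix, the multiplication is not a congruence in the strict (nonsingular, square) sense, but the fact you actually need—that $W^\top A W\succeq 0$ whenever $A\succeq 0$, for an arbitrary conformable $W$—holds regardless, so the step is sound.
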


\bibliographystyle{IEEEtran}
\bibliography{IEEEabrv,Control,Robust_DDC,DDC,mathematics}

\begin{IEEEbiographynophoto}{Taira Kaminaga}
received his B.E. degree in Engineering from Tokyo Institute of Technology, Tokyo, Japan in 2023. 
He is currently pursuing his M.E. degree at the Graduate School of Engineering, Institute of Science Tokyo, Tokyo, Japan.
His research interests include data-driven control.
%
%
%
\end{IEEEbiographynophoto}

\begin{IEEEbiographynophoto}{Hampei Sasahara}(M'19)
is Assistant Professor with the Department of Systems and Control Engineering, Institute of Science Tokyo, Tokyo, Japan.
He received the Ph.D. degree in engineering from Tokyo Institute of Technology in 2019.
From 2019 to 2021, he was a Postdoctoral Scholar with KTH Royal Institute of Technology, Stockholm, Sweden.
From 2022 to 2024, he was Assistant Professor with Tokyo Institute of Technology, Tokyo, Japan.
His main interests include control of large-scale network systems and secure control system design. 
\end{IEEEbiographynophoto}

\end{document}